\newcommand{\mf}{\mathfrak}
\newcommand{\mc}{\mathcal}
\newcommand{\C}{\mathbb{C}}
\newcommand{\Q}{\mathbb{Q}}
\newcommand{\R}{\mathbb{R}}
\newcommand{\Z}{\mathbb{Z}}
\newcommand{\PP}{\mathbb{P}}
\newcommand{\Hom}{\operatorname{Hom}}
\newcommand{\End}{\operatorname{End}}
\newcommand{\Ext}{\operatorname{Ext}}
\newcommand{\on}{\operatorname}
\newcommand{\Id}{\operatorname{Id}}
\newcommand{\Aut}{\operatorname{Aut}}
\newcommand{\Heis}{\mc{H}eis}
\newcommand{\csth}{{\C^\ast_\hbar}}
\newcommand{\wt}{\on{wt}}
\newcommand{\Res}{\on{Res}}
\newcommand{\pt}{\mrm{pt}}
\newcommand{\Jac}{\on{Jac}}
\renewcommand{\tilde}{\widetilde}
\newcommand{\mbb}{\mathbb}
\newcommand{\pd}[2]{\frac{\partial #1}{\partial #2}}
\newcommand{\GL}{\operatorname{GL}}
\newcommand{\mrm}{\mathrm}
\newcommand{\vac}{|\rangle}
\DeclareMathOperator{\NS}{NS}
\DeclareMathOperator{\SL}{SL}
\DeclareMathOperator{\Stab}{Stab}
\DeclareMathOperator{\Coh}{Coh}
\DeclareMathOperator{\NE}{NE}
\DeclareMathOperator{\gr}{gr}
\DeclareMathOperator{\Pic}{Pic}
\DeclareMathOperator{\Sym}{Sym}
\DeclareMathOperator{\A}{\mathbb{A}}
\titleformat{\subsubsection}[runin]{\bfseries}{\thesubsubsection}{0.5em}{}
\newtheorem{thm}{Theorem}[section]
\newtheorem{prop}[thm]{Proposition}
\theoremstyle{definition}
\newtheorem{defn}[thm]{Definition}
\newtheorem{rmk}[thm]{Remark}
\let\c@equation\c@thm
\numberwithin{equation}{section}
\title{\vspace{-2cm} Elliptic surfaces and toroidal superalgebras}
\author{Samuel DeHority}
\begin{document}
\maketitle
\begin{abstract}
The equivariant cohomology of certain moduli spaces of sheaves on isotrivial elliptic surfaces are shown to admit representations of infinite dimensional Lie (super)algebras. The construction is based on work of Billig and Chen-Li-Tan on vertex representations of toroidal extended affine Lie algebras, and a geometric interpretation of $bc$-ghost systems on Hilbert schemes of points on ruled surfaces. The role of higher-nullity toroidal extended affine lie algebras are discussed in relation to tensor products of modules of a Yangian-like associative algebra. Applications to enumerative geometry are discussed in the introduction. 
\end{abstract}

\tableofcontents

\section{Overview}
This paper is the first in a series that calculates enumerative invariants of moduli spaces of sheaves on a class of elliptic surfaces using methods from geometric representation theory and noncommutative geometry. In this overview, we explain the main results of the current paper as well as their relevance to the entire series. 

Associated to a semisimple Lie algebra $\mf g$ with root system $R$, a (classical) toroidal Lie algebra 
$\widehat{\widehat{\mf g}}$ 
is a multivariable analogue of the affine Lie algebra 
$\widehat{\mf g}$ defined by 
\[ \widehat{\widehat{\mf g}} = \mf g\otimes  A \oplus \Omega^1_{ A}/d\Omega^0_{ A}\] 
where $ A = H^0(\mc O_{\C^{\ast 2}_{s,t}})$ is the ring of functions on the torus $\C^{\ast 2}_{s,t}$ and the bracket is defined by extending linearly from 
\[ [x f(s,t), y g(s,t)] = [x,y]f(s,t)g(s,t) + \langle x, y\rangle g df \] 
such that $\Omega^1_{ A} / d\Omega^0_{ A}$ is central. There are a number of Lie algebras containing $\widehat{\widehat{\mf g}}$, including the full toroidal algebra, the elliptic Lie algebra and the toroidal extended affine Lie algebra, whose roots may be labelled by the same root lattice but differ by root multiplicities. 

 The purpose of this paper is  to assign 
  a Lie algebra $\mf g_{X_R}$ and an associative algebra 
 $\mc A_\hbar(\mf g_{X_R})$ to an elliptic surface 
 $X_R$ 
 labelled by a root system $R$ encoding its singular fibers and to study the representation theoretic properties of these algebras. We expect that the associative algebra $\mc A_\hbar(\mf g_{X_R})$ admits a filtration so that 
 \[ \gr \mc A_\hbar(\mf g_{X_R}) = \overline{\mc U(\on{Maps}(\C, \mf g_{X_R}))}\] 
 making $\mc A_\hbar(\mf g_{X_R})$ a filtered deformation of some completion of the current algebra and hence an analogue of the Yangian. By different methods it is possible to construct an algebra $\mc A_K(\mf g_{X_R})$  which acts in topological $K$-theory and we expect $\gr A_{K}(\mf g_{X_R})$ is a filtered deformation of a completion of 
 \[ \mc U(\widehat{\mf g_{X_R}}) \supset \mc U (\on{Maps}(\C^{\ast 3}, \mf g_{R}) )\] 
containing a $3$-toroidal algebra associated to the finite dimensional Lie algebra $\mf g_R$.

 The Lie algebra  $\mf g_{X_R}$ contains the corresponding toroidal algebra $\widehat{\widehat{\mf g}}$ and  $\mc A_\hbar(\mf g_{X_R})$  is defined
   such that the enumerative invariants of $X_R$-fibered threefolds, or equivalently of moduli spaces of sheaves on $X_R$, should be able to be expressed in terms of the representation theory of $\mc A_\hbar(\mf g_{X_R})$. This is an elliptic surface analogue of the fact that enumerative geometry of Nakajima quiver varieties associated to a quiver $Q$ can be calculated using the quiver Yangian $Y(\mf g_Q)$ \cite{Maulik_Okounkov_2019} associated to a Lie algebra $\mf g_Q$ containing the Kac-Moody algebra whose Dynkin diagram is that quiver. 

Unlike the situation for quiver varieties, where there is a unified story for all quivers $Q$, the geometric, enumerative and representation theoretic results of this project are concerned with the case that $R$ lies in an exceptional series of root systems 
\begin{equation}\label{eq:exceptional_series}
  R \in \{ A_{-1}, A_0, A_1, A_2, D_4, E_6, E_7, E_8 \}
\end{equation} 
which is closely related to the more familiar Deligne exceptional series, which is discussed together with an explanation of the nonstandard root systems $A_{-1}$ and $A_0$ in Section \ref{ssec:root_systems} below. The Deligne series has important relations to Painlev\'e equations, vertex algebras and twistor theory. The Lie algebra $\mf g_{X_R}$ like the toroidal algebra $\widehat{\widehat{\mf g_R}}$ and a few related algebras, has root spaces labelled up to multiplicity by the elliptic root system 
\[ R^{ell} = \{ \beta + m [\pt] + n[E] \mid \beta \in R\cup \{0\}, ~m,n\in \Z \} \subset R \oplus K_{num}(E) \] 
where $E$ is an elliptic curve and $K_{num}(E) = K_0(E)/{\on{rad} \chi(-,-)}\simeq \Z^2 $ is the numerical Grothendieck group. 

In the present context, the series \eqref{eq:exceptional_series} arises from the condition on the structure of the singular fiber of an elliptic surface over $\A^1$ which admits an action of the multiplicative group scaling the base of the fibration. The series \eqref{eq:exceptional_series} corresponds to the fiber types
\begin{equation}
  \label{eq:kodaira_series}
  \{ \RN{1}_0, \RN{2},\RN{3} ,\RN{4}, \RN{1}_0^*,\RN{4}^*,\RN{3}^* ,\RN{2}^*  \} 
\end{equation}
in Kodaira's classification of singular fibres of minimal elliptic surfaces. 

There are two families and six isolated $\C^\ast_\hbar$-equivariant elliptic surfaces over $\A^1$ where $ \C^\ast_\hbar\subset T$ scales the base $\A^1$ with a unique fixed point at $0$ and scales the coordinate $b \in \A^1$ by a positive multiple of $\hbar$.  Here $T$ is a larger torus that also acts, potentially trivially, on moduli spaces under consideration. We label a choice of such surface by $X_R$ indexed by a Dynkin type in \eqref{eq:exceptional_series}. The root systems other than $A_{-1}$ in the exceptional series \eqref{eq:exceptional_series} organize into dual pairs 
\begin{equation}
  \label{eq:conj_pairs} (A_0, E_8), (A_1, E_7), (A_2, E_6), (D_4, D_4) 
\end{equation}
so that if $(H_1, H_2)$ are simply connected groups of type corresponding to a dual pair then $H_2$ is conjugate to the centralizer of $H_1$ in $E_8$ under the chain of embeddings 
\[ A_0 \subset A_1 \subset A_2 \subset D_4 \subset E_6 \subset E_7 \subset E_8.
 \] 
Geometrically, the dual pairs \eqref{eq:conj_pairs} correspond to the fibers over $0$ and $\infty \in \PP^1$ of a minimal elliptic surface $\overline{X_R}$ fitting into a diagram \[
\begin{tikzcd}
X_R \arrow[d] \arrow[r, hook] & \overline{X_R} \arrow[d] \\
\A^1 \arrow[r, hook]          & \PP^1                   
\end{tikzcd}\] 
so that if $(R_1, R_2)$ are a dual pair then $\overline{X_{R_1}} \simeq \overline{X_{R_2}}$ over an automorphism of $\PP^1$ exchanging $0$ and $\infty$. 

In the simplest case of the empty root system, the surface is $X_{A_{-1}} = T^*E = E\times\A^1$. The $A_{-1}$ case is the main example of the present paper and has a number of distinguishing features, including the existence of an odd part of the algebra on account of odd cohomology. Thus we will be primarily concerned with representation theoretic aspects of the algebras $\mf g_{T^*E}$ and $\mc A_\hbar(\mf g_{T^*E})$. The algebra $\mf g_{T^*E}$ has another description as $\widehat{\mc Ham}(\C^{\ast 2 | 2 })$ which is a central extension of the algebra of Hamiltonian vector fields on an affine group superscheme. Precisely, let $\mf g_{T^*E} = \bigoplus_{r,d} \mf g_{r,d}\oplus \Q \pmb{c}_s \oplus \Q \pmb{c}_t$ where $\mf g_{r,d}\simeq H^*(E)$.  Then $\mf g_{T^*E}$ is a Lie super algebra with bracket 
\begin{equation}\label{eq:main_bracket} [w^{a,b}_{\gamma}, w^{c,d}_{\gamma'}] = -(ad-bc) w^{a + c, b + d}_{\gamma \star \gamma'}  + \delta_{a+c, 0} \delta_{b + d, 0}\langle \gamma, \gamma'\rangle  (a\pmb{c}_s + b\pmb{c}_t)
\end{equation}
where $\star:  H^*(E) \otimes H^*(E) \to H^*(E)$ is the convolution product. The even Lie subalgebra of this Lie superalgebra is closely related to a toroidal extended affine Lie algebra, which extends the toroidal algebra by adjoining a subset of the derivations on the underlying torus.

The Lie bracket \eqref{eq:main_bracket} has a formulation using generating series. Letting $H^*(E) = \Q [\sigma_+, \sigma_-]$ be a ring generated freely by two odd generators and letting $Z = (z_1, z_2 | \psi_+ , \psi_-)$ denote a formal coordinate on a $2|2$-dimensional superspace define the generating series  $\mbb{D}(Z) \in \mf g_{T^*E}[ \psi_+ , \psi_-][[ z_1, z_2, z_1^{-1}, z_2^{-1}]]$ by 
\[ \mbb{D}(Z) = \sum_{a,b\in \Z} w^{a,b}_{1}\psi_+\psi_-z_2^{-a}z_1^{-b}+  w^{a,b}_{\sigma_-}\psi_+z_2^{-a}z_1^{-b}+  w^{a,b}_{\sigma_+}\psi_-z_2^{-a}z_1^{-b} + w^{a,b}_{\sigma_+\sigma_-}z_2^{-a}z_1^{-b}\] 
then the bracket \eqref{eq:main_bracket} is equivalent to 
\begin{multline}
  \label{eq:22_notation_D_bracket}
  [\mbb{D}(Z) ,\mbb{D}(W)] = D_{w_1}\mbb{D}(W + \psi)\delta\left( \frac {w_1}{z_1}
  \right)D_{w_2}\delta\left( \frac{w_2}{z_2} \right) -D_{w_2} \mbb{D}(W + \psi)D_{w_1}\delta\left( \frac {w_1}{z_1}
  \right)\delta\left( \frac{w_2}{z_2} \right)  \\
  + \pmb{c}_{t} D_{w_1} \delta\left( \frac {w_1}{z_1}
  \right)\delta\left( \frac{w_2}{z_2} \right)\delta\left( \frac {\phi_1}{\psi_1}
  \right)\delta\left( \frac{\phi_2}{\psi_2} \right)+ \pmb{c}_{s} \delta\left( \frac {w_1}{z_1}
  \right)D_{w_2}\delta\left( \frac{w_2}{z_2} \right)\delta\left( \frac {\phi_1}{\psi_1}
  \right)\delta\left( \frac{\phi_2}{\psi_2} \right)
\end{multline}
where $D_{w_i} = w_i \pd{}{w_i}$, and the delta function of a pair of odd variables $\theta, \chi$ is denoted $\delta(\theta/\chi) = \theta + \chi$. 

The existence of two-variable generating functions is related to the existence of a discrete group of automorphisms of $\mf g_{X_R}$ which acts by $\GL(2, \Z)$ on the coordinates $z_1, z_2$. Geometrically this discrete group arises from derived (anti)-autoequivalences of the surface $\overline{X_R}$. 

\subsection{Geometric vertex algebra realization}

Following a body of literature constructing vertex realizations of toroidal algebras and their extensions \cite{Moody_Rao_Yokonuma_1990,Eswara_Rao_Moody_1994,Billig_2006, Billig_2007,Chen_Li_Tan_2021} this paper constructs representations of $\mf g_{T^*E}$ using the theory of vertex operator algebras using previously established relations between vertex operator algebras and moduli spaces of sheaves on surfaces. 

The main geometric object considered in this paper is the Hilbert scheme of points $X_R^{[n]}$. Its equivariant cohomology $H^*_T(X_R^{[n]})$ we expect to serve as a weight space for a representation of $\mc A_\hbar(\mf g_{X_R})$ and its subalgebra $\mf g_{X_R}$. 

Specifically, outside of the main example of type $R = A_{-1}$,  in Theorem \ref{thm:goofy_other_surfaces} we only produce an action of a Lie algebra $\mf g_{X_R}$ on a larger vector space whose weight spaces are $H^*_T(\overline{X_R}^{[n]})$. The algebra $\mf g_{X_R}$ coincides with the toroidal extended affine Lie algebra of type $R$ when $R \neq A_{-1}$. The action on cohomology depends on a choice of a cohomology class $[\mc C] \in H^*_T(\overline{X}_R)$ represented by a cycle supported on $D_\infty = \overline{X_R}\backslash X_R$. We view this construction as preliminary although we expect the action to coincide with one defined by convolution by correspondences with relevance to enumerative geometry after pulling back to moduli spaces associated to $X_R$ itself. 

For this reason we focus on the case $R = A_{-1}$ where the vector space underlying the vertex algebra structure coincides with the cohomology of moduli spaces of points avoiding the compactification divisor. 

The starting point for the vertex construction is the construction 
\cite{Nakajima_1999,Grojnowski_1996,Nakajima_1996_jack}
 of a representation of the Heisenberg-Clifford algebra $\mc Heis_{H^*_T(S)} = \Q\langle \alpha_k(\gamma), \pmb{c} \rangle_{k \in \Z\backslash 0,\gamma \in B}$ modelled on on the equivariant cohomology $H^*_T(S)$ with basis $B$ over $H^*_T(\pt)$ of a semiprojective surface $S$ on the cohomology of the union of Hilbert schemes 
 \begin{equation}
  \label{eq:nakajima_hilb}\mc Heis_{H^*_T(S)} \to \End(\bigoplus_{n > 0}  H^*_T(S^{[n]})) 
\end{equation}
identifying the latter with the vacuum module 
\[ \mc F_S := \mc Heis_{H^*_T(S)}\vac \simeq \Q[\alpha_k(\gamma)]_{k <0, \gamma \in B}\] 
for a vertex algebra $(\mc F_S, Y, \vac, \omega)$. 

The toroidal algebra action and its relation to Hamiltonian vector fields is conceptually distinct from the action of the cohomology-labelled W super algebra $\mc W(H^*_T(X_R))$ which acts on $\mc F_S$ under the specialization $\hbar = 0$ by \cite{Li_Qin_Wang_2002, } and admits a filtration whose associated graded $\gr \mc W(H^*_T(X_R))$ is generated by $L^{a,b}_\gamma$ with commutation relations 
\[ [L^{a,b}_\gamma, L^{c,d}_\eta] = (ad-bc)L^{a+c-1, b+d}_{\gamma \cup \eta}   - \frac{\Omega_{b,d}^{a,c}}{12} L^{ a+ c-3, b + d}_{e\cup\gamma\cup \eta} + \cdots \]
where $\Omega_{b,d}^{a,c}$ is an integer and $\cdots$ has central terms.
It is an important question to describe concisely the algebra generated by $\mc W(H^*_T(X_R))$ and elements of $\mf g_{X_R}$ although the commutation relations may be calculated for any pair of generating fields using Wick's theorem to determine OPEs. More generally, we expect a close relationship between the action of tautological classes and slope subalgebras of $\mf g_{X_R}$ with the deformed W algebras which arise in \cite{Hausel_Mellit_Minets_Schiffmann_2022}.

A related question is to understand the relationship between $\mc A_\hbar(\mf g_{X_R})$ and the three-loop algebras associated to the Deligne series studied in \cite{Costello_Gaiotto_2021,Costello_Gaiotto_2021} and to explain the relationship between these algebras and the loop algebra of algebra of Hamiltonian vector fields appearing e.g. in \cite{Bittleston_2023}. 

Letting $D_\infty = \overline{X_{A_{-1}}} \backslash X_{A_{-1}} \simeq E$ and letting $\mc F \in \Coh(D_\infty)$ be a stable sheaf with coprime rank $r> 0$ and degree $d$ we will choose a polarization $H$ in a specific chamber for a wall-and-chamber structure for Gieseker stability on $\overline{X_{A_{-1}}}$ and define $M_{H}(v + m[E] + n [\pt]; \mc F )$ to be the moduli space of $H$-Gieseker stable sheaves $\mc E$ on $\overline{X_{A_{-1}}}$ equipped with a framing $\mc E|_{D_\infty} \simeq \mc F$. Then summing over an appropriate range of chern classes, define 

\begin{equation}
  \label{eq:VtE_intro_def} V_{{T^*E}, \mc F} := \bigoplus_{m,n} H_T^*(M_{H}(v + m[E] + n [\pt]; \mc F )).
\end{equation}

One of the main theorems of this series is the following. 

\begin{thm}\label{thm:main_nonlog_thm}
There is a vertex operator algebra structure $(V_{{T^*E}, \mc F}, Y(-, z), \vac, \omega)$ inducing a representation $\mf g_{T^*E} \to \End(V_{T^*E, \mc F})$. 
\end{thm}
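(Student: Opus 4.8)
The plan is to realize $V_{T^*E,\mc F}$ as a free-field Fock space and to exhibit the generators $w^{a,b}_\gamma$ as modes of explicit vertex operators inside it, so that the representation $\mf g_{T^*E}\to\End(V_{T^*E,\mc F})$ is induced directly from the mode algebra of the VOA. First I would apply the Nakajima--Grojnowski construction \eqref{eq:nakajima_hilb}, in the framed form appropriate to the moduli spaces $M_H(v+m[E]+n[\pt];\mc F)$, to identify each fixed-framing tower $\bigoplus_n H^*_T(M_H(v+m[E]+n[\pt];\mc F))$ with a Heisenberg--Clifford Fock module on $H^*_T(T^*E)$. The decomposition $H^*(E)=\Q[\sigma_+,\sigma_-]$ splits the creation operators into bosonic modes (from $H^0$ and $H^2$) and fermionic modes (from the two odd classes $\sigma_\pm\in H^1(E)$); the latter furnish precisely the $bc$-ghost system alluded to in the abstract. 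Summing over the remaining index $m$, which records the $[E]$-component of the Chern character, promotes this to a lattice Fock space for the rank-two lattice $K_{num}(E)\simeq\Z^2$ with its Euler pairing. The VOA data $(Y(-,z),\vac,\omega)$ are then the standard ones on a lattice vertex superalgebra tensored with a $bc$ system, with $\omega$ the sum of the Heisenberg, fermionic and lattice conformal vectors and $\vac$ the class of the empty moduli space.

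Next I would construct the generating field $\mbb{D}(Z)$ following Billig and Chen--Li--Tan. The two toroidal directions are realized asymmetrically: the index $b$ in $w^{a,b}_\gamma$ becomes the formal-variable mode index of a field $W^a_\gamma(z)=\sum_b w^{a,b}_\gamma z^{-b}$, while the index $a$ is carried by a lattice shift $e^{a\alpha}$ along a distinguished vector $\alpha$ of the rank-two lattice. Thus $W^a_\gamma(z)$ is the normally ordered product of the exponential vertex operator $e^{a\alpha}(z)$ with the bosonic/fermionic field labelled by $\gamma\in\{1,\sigma_+,\sigma_-,\sigma_+\sigma_-\}$, and assembling these with the odd variables $\psi_\pm$ reproduces the generating series $\mbb{D}(Z)$ displayed before \eqref{eq:22_notation_D_bracket}. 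By construction every such mode is a polynomial in Nakajima operators, so it genuinely acts on $V_{T^*E,\mc F}$.

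It then remains to verify that these modes satisfy \eqref{eq:main_bracket}, equivalently the generating-series identity \eqref{eq:22_notation_D_bracket}, which I would check by computing the operator product expansion of $\mbb{D}(Z)$ with itself via Wick's theorem. The single-contraction terms of the lattice exponentials against the derivative fields $D_{w_i}$ produce the symplectic coefficient $-(ad-bc)$ together with the convolution product $\gamma\star\gamma'$ on the $H^*(E)$-labels, matching the first two derivative-$\delta$ terms of \eqref{eq:22_notation_D_bracket}; the double-contraction (anomaly) terms, which survive only when the total lattice charge $a+c$ and total mode index $b+d$ both vanish, should produce the pairing $\langle\gamma,\gamma'\rangle$ and the two central elements $a\pmb{c}_s+b\pmb{c}_t$.

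The main obstacle, I expect, is producing both central elements $\pmb{c}_s$ and $\pmb{c}_t$ independently and with the correct normalization. A lattice vertex operator realization naturally yields a single anomaly -- the level of the Heisenberg contraction -- whereas the bracket \eqref{eq:main_bracket} requires the two-dimensional center spanned by $\pmb{c}_s,\pmb{c}_t$, which one should identify with the degree-zero one-forms $ds/s$ and $dt/t$ inside $\Omega^1_A/d\Omega^0_A$. Recovering the second central direction is exactly the point addressed by Billig and Chen--Li--Tan, where it is supplied by a lattice rather than a purely Heisenberg contraction; transporting their argument to the present super setting is the crux. A secondary difficulty is the super sign bookkeeping: the fermionic $bc$-contractions must be tracked so that the odd entries of $\mbb{D}(Z)$ close into the Lie superalgebra bracket with the signs dictated by \eqref{eq:main_bracket}, and so that the odd delta-function convention $\delta(\theta/\chi)=\theta+\chi$ is respected throughout. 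Finally, one must confirm that the abstract lattice Fock space is genuinely identified with $V_{T^*E,\mc F}$ as defined in \eqref{eq:VtE_intro_def}, i.e. that the framed moduli cohomology assembles into this Fock space with the Euler pairing as claimed; this is where the specific choice of polarization chamber for $H$ should enter.
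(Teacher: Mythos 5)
Your overall architecture --- the Nakajima--Grojnowski Fock space, fermionic modes from $H^{\underline 1}(E)$, and the asymmetric lattice realization in which the index $a$ becomes a lattice shift $e^{aE}$ while $b$ becomes a mode index --- is the same as the paper's. But your field assignment breaks down at the crucial component. You propose that every $W^a_\gamma(z)$ be the normally ordered product of the lattice exponential with ``the bosonic/fermionic field labelled by $\gamma$,'' i.e.\ essentially $:\alpha(\gamma,z)\Gamma_{aE}(z):$. For $\gamma$ equal to the convolution unit $1=[\pt]$ this cannot reproduce \eqref{eq:main_bracket}: since $\langle \pt,\pt\rangle = 0$ and $\langle E, E\rangle = 0$, Wick's theorem applied to $:\alpha(\pt,z)\Gamma_{aE}(z):$ against $:\alpha(\pt,w)\Gamma_{cE}(w):$ produces a simple pole with coefficient of the form $(c-a)$ times a $\pt$-labelled field, and a double pole proportional to $\Gamma_{(a+c)E}(w)$ (an $E$-labelled, central field); no term of the form $-(ad-bc)\,w^{a+c,b+d}_{\pt}$, with structure constant depending on the mode indices, can arise. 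The unit sector of $\mf g_{T^*E}$ is the algebra of Hamiltonian vector fields on the torus, not a Heisenberg-type algebra, and realizing it is exactly the point of Billig's construction: the field must be built from the stress tensor, as in \eqref{eq:d_circ_freefld}, and Billig's theorem (Theorem \ref{thm:billig_full}) closes only in the presence of an auxiliary Heisenberg--Virasoro module with $k_V=-2$, $k_H=1$, $k_{HV}=1/2$. The paper's central observation is that the symplectic fermion factor $\mc F_{H^{\underline 1}(E)}$ supplies precisely this data via $\omega_{aux}\mapsto \overline{\omega}_{bc}= -:c\,\partial b:$ and $I\mapsto -J_{bc}$, see \eqref{eq:hvir_rep}. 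In your proposal the fermions serve only as odd label directions, so the ingredient that makes the derivation sector close --- and that justifies the ``geometric $bc$-ghost'' slogan in the abstract --- is missing.

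Second, your verification step conflates two kinds of OPE. Wick's theorem on ordinary vertex operators yields additive brackets, with poles in $(z-w)$ and coefficients $\partial^{(j)}_w\delta(z-w)$, whereas the target \eqref{eq:main_bracket}, equivalently \eqref{eq:22_notation_D_bracket}, is multiplicative, involving $D_w$ and $\delta(w/z)$. The paper handles this in two stages: it first realizes a variant algebra $\mf g^{\circ}_{T^*E}$ with ordinary OPEs (Theorem \ref{thm:variant_LA}), and then passes to the $\phi$-coordinated module formalism of Li and Chen--Li--Tan, i.e.\ the modified vertex operators $X^{\phi}(v,z)=Y(z^{L(0)}T(v),z)$ of Proposition \ref{prop:modified_gives_coord}, whose bracket \eqref{eq:x_bracket} automatically takes the multiplicative form; this produces the fields \eqref{eq:d_freefld}--\eqref{eq:sm_freefld} and the match of \eqref{eq:Dlabel_ope} with \eqref{eq:ope_mult_gen_fn}, which is Theorem \ref{thm:voa_is_gterep}. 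Without this conversion your Wick computation lands on the wrong bracket. By contrast, the obstacle you single out as the crux --- recovering two independent central elements --- is largely a non-issue: as in Moody--Rao--Yokonuma and Billig, the whole central tower is realized by the lattice vertex operators themselves, $\mrm k_m(z)=\frac 1m \Gamma_{mE}(z)$, and the degree-zero centers appear in the double-pole terms of \eqref{eq:ddope}--\eqref{eq:ssope}. Finally, note that the geometric identification of the Fock space with $V_{T^*E,\mc F}$, which you correctly flag, is carried out in the paper only for $\on{rk}\,\mc F = 1$; the higher-rank case is deferred to the sequel.
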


There are analogous vertex operator algebras $V_{X_R}$ for any $R$ from \eqref{eq:exceptional_series}.  Owing to the multiplicative derivatives and delta functions in \eqref{eq:22_notation_D_bracket} the Fourier coefficients of $\mbb{D}(Z)$ do not act as Fourier modes of vertex operators in $V_{T^*E, \mc F}$ but following the terminology of \cite{Chen_Li_Tan_2021} they are identified with Fourier modes of a $\phi$-coordinated module structure of $V_{T^*E, \mc F}$ on itself. Explicit formulas for the representation are given in \eqref{eq:d_freefld}-\eqref{eq:sm_freefld} expressed in terms of the Heisenberg algebra from \eqref{eq:nakajima_hilb}.  

Pick an identification $E \simeq \Jac(E)$ and $\mc P$ the universal line bundle. There is a relative version of the moduli space $M_H(v; \mc F)$ of the form $M_H(v; \mc F\otimes \mc P)/\Jac(E)$. 
There is a related vertex algebra $V^{\Jac}_{T^*E,\mc F}$ with weight spaces in \eqref{eq:VtE_intro_def} replaced by their relative versions so that 

\begin{equation}
  \label{eq:VtE_intro_def} V^{\Jac}_{{T^*E}, \mc F} := \bigoplus_{m,n} H_T^*(M_{H}(v + m[E] + n [\pt]; \mc F\otimes \mc P )/\Jac(E)).
\end{equation}

There is a notion of a vertex algebra over a supercommutative ring \cite{Barron_2010}. Also based on work on logarithmic conformal field theory \cite{Gurarie_1993} one may define a \emph{logarithmic vertex algebra}  \cite{Bakalov_Villarreal_2022} in which fields and operator product expansions may have logarithmic singularities. Based on this there is a superalgebra $\mf g^{\Jac}_{T^*E}$ which contains 
\[\mf g_{T^*E}\otimes_{H^*_T(\pt)}H^*_T(\Jac(E))\]  defined in \ref{ssec:jac_g} which is used in the proof of \ref{thm:main_nonlog_thm} through the following result. 

\begin{thm}\label{thm:main_log_thm}
There is a conformal logarithmic vertex operator algebra structure on 
$V^{\Jac}_{{T^*E}}$ 
giving a representation of
 $\mf g^{\Jac}_{T^*E}$ 
 on 
 $V^{\Jac}_{T^*E,\mc F}$. 
\end{thm}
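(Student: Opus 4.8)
The plan is to construct the logarithmic structure directly as a free-field realization on the relative moduli spaces, rather than deducing it from Theorem~\ref{thm:main_nonlog_thm}; indeed the implication runs the other way, the non-logarithmic statement being recovered by restricting along a fiber of $\Jac(E)$ and discarding the $\log$ terms. First I would extend the Heisenberg--Clifford correspondences of \cite{Nakajima_1999, Grojnowski_1996} from $T^*E$ to the relative family $M_H(v; \mc F \otimes \mc P)/\Jac(E)$. Since $E \simeq \Jac(E)$ acts on the framing by tensoring with degree-zero line bundles, these moduli spaces form a family over $\Jac(E)$, and the induced incidence correspondences define creation and annihilation operators valued in $H^*_T(\pt) \otimes H^*(\Jac(E))$. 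This exhibits $V^{\Jac}_{T^*E, \mc F}$ as a Fock module for an extended Heisenberg--Clifford algebra $\mc Heis_{H^*_T(T^*E)\otimes H^*(\Jac(E))}$ over the supercommutative coefficient ring $H^*_T(\pt) \otimes H^*(\Jac(E))$, placing us in the setting of vertex algebras over supercommutative rings \cite{Barron_2010}.

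Next I would write down the generating fields assembling $\mbb{D}(Z)$, modelled on the free-field formulas underlying Theorem~\ref{thm:main_nonlog_thm} but with the zero-mode operator twisted by the mixed K\"unneth component $c_1(\mc P) \in H^1(E) \otimes H^1(\Jac(E))$ of the Poincar\'e bundle. The essential point is that this component contributes a \emph{nilpotent} endomorphism $N$ to the exponent of the $z^{L_0}$-type factor entering each vertex operator, so that the conformal zero mode $L_0$ becomes non-semisimple in the manner of logarithmic conformal field theory \cite{Gurarie_1993}. Because $N$ is built from a single odd class one has $N^2 = 0$ and $z^{L_0} = z^{(L_0)_{ss}}(1 + N\log z)$; this is precisely the mechanism producing the logarithmic singularities and identifies the output as a conformal logarithmic vertex algebra in the sense of \cite{Bakalov_Villarreal_2022}, with conformal vector $\omega$ the usual Heisenberg Sugawara element corrected by $N$.

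I would then verify the logarithmic vertex algebra axioms by computing the pairwise operator product expansions of the generating fields via Wick's theorem, as indicated in the overview: logarithmic locality, existence of OPEs carrying both pole and $\log z$ terms, and compatibility with the $\phi$-coordinated module structure of \cite{Chen_Li_Tan_2021, Billig_2006, Billig_2007}. Matching Fourier modes against \eqref{eq:22_notation_D_bracket} then yields the representation: on the semisimple part one recovers the bracket \eqref{eq:main_bracket} of $\mf g_{T^*E}$ tensored over $H^*_T(\pt)$ with $H^*_T(\Jac(E))$, while the nilpotent corrections supply exactly the additional relations defining the extension $\mf g^{\Jac}_{T^*E} \supset \mf g_{T^*E} \otimes_{H^*_T(\pt)} H^*_T(\Jac(E))$.

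The hard part will be reconciling the multiplicative, toroidal shape of \eqref{eq:22_notation_D_bracket} --- encoded by the multiplicative derivatives $D_{w_i}$ and the two-variable delta functions of the $\phi$-coordinated formalism --- with the logarithmic singularities coming from $\Jac(E)$. One must show that differentiating a logarithmic field interacts consistently with the identities $D_{w_i}\delta(w_i/z_i)$, producing exactly the derivative and central terms demanded by the bracket and its Jacobian extension, with no spurious higher powers of $\log$. This reduces to a careful analysis of how the nilpotent zero mode $N$ commutes past the Heisenberg modes inside the OPE, and it is here that the bulk of the technical verification --- and the only real risk of an obstruction --- resides.
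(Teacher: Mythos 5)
There is a genuine gap, and it sits exactly where you flagged the ``only real risk'': your construction never produces the odd derivations $\pd{}{\xi_+}, \pd{}{\xi_-}$, which are part of the definition of $\mf g^{\Jac}_{T^*E}$ in Section \ref{ssec:jac_g}. Every operator you introduce --- Heisenberg--Clifford modes valued in $H^*_T(\pt)\otimes H^*(\Jac(E))$, and the twist $N$ by the K\"unneth component of $c_1(\mc P)$ --- acts on the base direction $H^*(\Jac(E)) = \C[\xi_+,\xi_-]$ by \emph{multiplication} operators. No algebra generated by multiplication operators contains a derivation: one needs conjugate operators satisfying $[\pd{}{\xi_\pm},\, \xi_\pm\cup(-)] = 1$. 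In the paper these are supplied geometrically by Proposition \ref{prop:bc_zero_modes_geo}: the logarithmic modes $\alpha_{\log}(\sigma_\pm)$ act by \emph{convolution} $\tau_\pm\star_\otimes(-)$ along the graph of the tensor action of $\Jac(E)_\otimes$ on $M(v;\mc F\otimes\mc P)/\Jac(E)$ --- an operation Fourier--Mukai dual to cup product, verified via $-\star- = \Phi(\Phi^{-1}(-)\cup\Phi^{-1}(-))$ --- while the zero modes $\alpha_0(\sigma_\pm)$ act by cup product with $\xi_\pm$. Under the identification \eqref{eq:vte_jac_is_tensor} of $V^{\Jac}_{T^*E}$ with a hyperbolic-lattice VOA tensored with the $bc$-system $V_{bc}$, these convolutions are precisely the modes $\theta^b,\theta^c$, and the pairings $[\theta^b,c_0]=[\theta^c,b_0]=1$ are what make $L_0$ non-semisimple and what make $V^{\Jac}_{T^*E}$ irreducible in Theorem \ref{thm:jac_irrep_thm}. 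You invoke the tensor action only to note that the moduli spaces form a family over $\Jac(E)$; its pushforward never enters your operator list, so your construction can at best represent the subalgebra $\mf g_{T^*E}^{\Jac{}\tilde{}}$ that omits the derivations --- under which, as the paper notes in its discussion of self-extensions \eqref{eq:vte_self_ext}, the module is not irreducible and the theorem as stated is not reached.

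A secondary but related error: the claim that $N^2=0$ ``because $N$ is built from a single odd class'' is false. For $g=1$ the mixed K\"unneth component is $c_1(\mc P) = \theta_1\chi_1 + \theta_2\chi_2$, an even class whose square is a nonzero multiple of $\theta_1\theta_2\chi_1\chi_2$, so your expansion $z^{L_0} = z^{(L_0)_{ss}}(1 + N\log z)$ drops a genuine $\log^2 z$ term. In the paper's model the nilpotent part of $L_0$ is not cup product by $c_1(\mc P)$ at all: it is the term $c_0 b_0 = (\xi_-\xi_+)\cup(-)$, i.e.\ cup product by the point class of the base, and its appearance as a Jordan block on states like $\theta^b\theta^c\vac$ again requires the convolution operators you have not constructed. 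The rest of your scaffolding does track the paper --- the relative Nakajima correspondences, Billig-type free fields, Wick-theorem verification, matching against the $\phi$-coordinated bracket of \cite{Chen_Li_Tan_2021}, and the (correct) observation that Theorem \ref{thm:main_nonlog_thm} is deduced from this theorem rather than conversely --- but without the convolution/derivation operators the proposal does not yield a representation of $\mf g^{\Jac}_{T^*E}$.
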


A key point is the identification of the zero modes and logarithmic modes of certain vertex operators in 
$V^{\Jac}_{T^*E}$ 
with geometric operations of cup product and convolution on the cohomology of the Jacobian. This aspect admits an immediate generalization to higher genus curves $C$ giving a geometric interpretation of a rank 
$n$ $bc$-system 
$V_{bc}^{\otimes n}$ 
in terms of moduli spaces of framed rank $1$ sheaves on ruled surfaces over $C$.

\subsection{Moduli of framed sheaves on noncommutative surfaces}

In this paper, Theorems \ref{thm:main_nonlog_thm} and \ref{thm:main_log_thm} are proven in their entirety for $\mc F$ of rank 1, while for higher rank they depend on an analysis of moduli spaces of framed objects $(\mc E, \phi) \in M_{\sigma}(v; \mc F)$ in the derived category $D^b(\Coh(\overline{X}_R))$ which are stable with respect to a Bridgeland stability condition $\sigma
\in \Stab(\overline{X}_R)$. That analysis is carried out in the second part of this series \cite{dehority_toroidal_springer}. 

There is a related problem of the identification of the geometric meaning of the operators on $V_{T^*E}$ corresponding to the generators of $\mf g_{T^*E}$. While the generators of the Heisenberg algebra arise from convolution with specific correspondences between two Hilbert schemes and the expression of the $w_\gamma^{a,b}$ operators in terms of locally finite expressions in $\mc Heis_{H^*_T(S)}$ in principle provides a geometric interpretation of the generators,  for applications to enumerative geometry a different description is necessary. 

For $A_n$ surfaces $\widetilde {\C^2/\Gamma}$ the identification of Fourier modes of lattice vertex algebras in terms of extension correspondences on Nakajima quiver varieties was accomplished in \cite{Nagao_2007}. In \cite{DeHority_2020} this was extended to certain middle dimensional correspondences between smooth moduli spaces of sheaves on K3 surfaces $S$ providing a representation of the affinization of a Lorentzian Kac-Moody algebra $\widehat{g}_{\NS(S)}$ on the cohomology of some smooth moduli spaces of Bridgeland stable objects on K3 surfaces of the form
\[ \widehat{\mf g_{\NS(S)}} \to \End(\bigoplus_{\alpha \in \widehat{R}_{\NS(S)}} H^*(M_{\sigma}(v + \alpha)))\] 
where $\NS(S)$ is the Neron-Severi lattice of $S$ required to satisfy a natural condition, $\widehat{R}_{\NS(S)}$ is the root system of 
\[ \widehat{\mf g}_{\NS(S)} = \mf g_{\NS(S)}[t^\pm] \oplus \Q c \] 
which is identified with a sublattice of $K_{num}(S)$. 

In particular, by restricting to a family of open analytic subsets of these moduli spaces $M_\sigma(v)$ which are diffeomorphic to the moduli space $M(v; \mc F)$ the generators $x_\alpha$ of $\widehat{\widehat{\mf g_R}}$ in the root space $\widehat{\widehat{\mf g_{R, \alpha}}}$ can be shown to act via correspondences 
\[ \Phi_{C, C'} \circ Z_x \circ \Phi_{C', C} \subset M(v; \mc F)\times M(v+\alpha; \mc F)\]
between moduli spaces of framed sheaves on $\overline{X_R}$ where $\Phi_{C,C'}$ is a cycle giving the isomorphism on cohomology induced by a birational identification between moduli spaces for $\sigma \in C$ and $\sigma' \in C'$ where $C$ and $C'$ are two chambers in $\Pic(M(v; \mc F))$ and $C'$ is adjacent to a wall labelled by $\alpha$ and $Z_x$ is a correspondence induced by the action of a cohomological Hall algebra which locally coincides with the correspondences of Nakajima \cite{Nakajima_1998}. 

The answer to the geometric identification question in this equivariant and non-middle-dimensional setting, which is necessary for applications in enumerative geometry, is provided in the $A_{-1}, D_4$ and $E_{6}, E_7$ and $E_8$ cases by a deformation of the moduli spaces $M(v; \mc F)$.  These moduli spaces $M(v; \mc F)$ like the Hilbert scheme of points, admit proper Lagrangian fibrations to $\A^n(v)$. While we expect the same results to hold in the more general setting, the proofs rely on the fact that in these cases $X_R$ is a resolution of the coarse moduli space of a local elliptic orbifold $T^*[E/\Gamma]$ for $\Gamma\in \Aut(E)$. In these cases, moduli spaces of $\Gamma$-clusters of $T^*E$ are known to coincide with moduli spaces of tamely ramified parabolic Higgs bundles \cite{Groechenig_2014} while the other cases correspond to wild ramification \cite{Boalch_2018}. 

In \cite{dehority_toroidal_springer} we introduce a family of abelian categories $\mathscr A/B_R$ where 
\[ B_R = HH_0([E/\Gamma]) = \Ext^1_{[E/\Gamma]\times [E/\Gamma]}(\Delta_*\mc T_{[E/\Gamma]}, \mc O_\Delta) \]  is an affine base scheme and $\mathscr A = \Coh(\PP_{[E/\Gamma]_{B_R}}(\Xi))$ is a non-commutative $\PP^1$ bundle in the sense of Van den Bergh \cite{vdBergh_2012}. This induces a family of $dg$-categories $D_{\Coh}(\PP_{[E/\Gamma]_{B_R}}(\Xi))$ whose objects are complexes of objects in $\mathscr A$ whose cohomology live in a subcategory corresponding to coherent sheaves. Using the study of moduli spaces for families of noncommutative surfaces \cite{rains19birational_arxiv} and the construction of stability conditions in families of algebraic surfaces \cite{Bayer_Lahoz_Macrì_Nuer_Perry_Stellari_2021} we produce deformations of moduli spaces relevant to our situation.

\begin{thm}[\cite{dehority_toroidal_springer}]\label{thm:main_birational}
  \begin{itemize}
    \item[1)] If $\Gamma = \{ 1\}$ or subject to conjectures otherwise on $D_{\Coh}(\PP_{[E/\Gamma]}(\Xi))$ in general, 
there is a nonempty open subset 
\[U \subset \Stab(D_{\Coh}(\PP_{[E/\Gamma]_{B_R}}(\Xi)))\] and relative moduli spaces 
\[ M_{\underline \sigma}(v; \mc F)/HH_0([E/\Gamma]) \]
which are quasiprojective over $HH_0([E/\Gamma])$, semiprojective, and birational for any $\underline\sigma, \underline \sigma' \in U$. 
\item[2)] There is a 1-1 correspondence between walls in $N^1(M(v ;\mc F)/\A^{n(v)})_\R$ corresponding to singular birational models, walls in $HH_0([E/\Gamma])$ where the contraction is not biregular, and a specific subset of roots in $R^{ell}$. 
  \end{itemize}
\end{thm}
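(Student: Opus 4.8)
The statement is proven in the companion paper \cite{dehority_toroidal_springer}; here I outline the strategy I would follow. The plan is to build everything relative to the affine base $B_R = HH_0([E/\Gamma])$, treating $D_{\Coh}(\PP_{[E/\Gamma]_{B_R}}(\Xi))$ as a family of noncommutative surfaces over $B_R$ whose generic fibre is a smoothing of the special fibre over $0 \in B_R$. The first input is a family of Bridgeland stability conditions: starting from a double tilt of $\Coh(\PP_{[E/\Gamma]}(\Xi))$ and a relative polarization, I would invoke the family construction of \cite{Bayer_Lahoz_Macrì_Nuer_Perry_Stellari_2021}, adapted to the noncommutative $\PP^1$-bundles of \cite{vdBergh_2012} via the moduli-theoretic results of \cite{rains19birational_arxiv}, to produce an open subset $U \subset \Stab(D_{\Coh}(\PP_{[E/\Gamma]_{B_R}}(\Xi)))$ over which the central charge varies continuously and the support property holds uniformly in the base. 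The framing $\mc E|_{D_\infty} \simeq \mc F$ is essential throughout: it rigidifies objects, killing automorphisms, so that the moduli problem is representable by a scheme rather than a stack.

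For part 1), I would first establish boundedness of the family of $\underline\sigma$-semistable framed objects of class $v + m[\pt] + n[E]$ using the uniform support property together with the fact that the framing bounds the restriction to $D_\infty$; this reduces, after the nonabelian Hodge comparison with parabolic Higgs bundles of \cite{Groechenig_2014}, to a boundedness statement for a moduli of sheaves. Representability of $M_{\underline\sigma}(v;\mc F)/B_R$ as an algebraic space follows from the algebraicity of the stack of objects in $D_{\Coh}$ together with the rigidity supplied by the framing, and quasiprojectivity over $B_R$ follows by constructing a relatively ample determinantal line bundle via the positivity lemma of \cite{Bayer_Lahoz_Macrì_Nuer_Perry_Stellari_2021}, applied fibrewise over $B_R$. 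Semiprojectivity is the assertion that this space is projective over an affine base carrying a contracting $\C^\ast_\hbar$-action; I would produce the affine base as the target $\A^{n(v)}$ of the Lagrangian (Hitchin-type) fibration, i.e.\ the support map to the coarse spectral base, and use that $\C^\ast_\hbar$ scales this base with a unique fixed point, so that limits exist and the fixed locus is proper. Birationality for $\underline\sigma, \underline\sigma' \in U$ is then a wall-crossing statement: $U$ is connected and chosen to avoid totally semistable walls, the moduli space is constant on chambers, and each wall-crossing is a birational transformation, a flop or divisorial modification in the sense of \cite{Bayer_Lahoz_Macrì_Nuer_Perry_Stellari_2021}, that is an isomorphism over a dense open locus of objects stable on both sides.

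For part 2), the organizing tool is the relative Bayer--Macrì map $\ell \colon U \to N^1(M(v;\mc F)/\A^{n(v)})_\R$, which sends a wall in $\Stab$ to a wall in the relative Néron--Severi space and identifies the Mori chamber decomposition of the movable cone with the image of the wall-and-chamber decomposition of $U$. A wall yields a \emph{singular} birational model precisely when the contraction attached to the boundary nef class contracts a positive-dimensional locus, which happens exactly when there is a numerical class $\alpha$ with $0 < \alpha < v$ admitting $\underline\sigma$-stable objects on the wall; I would show this set of $\alpha$ is cut out by the condition that $\alpha$ be a root of $R^{ell}$, using the self-pairing $\langle \alpha, \alpha\rangle$ in $K_{num}$ to match the rigid, spherical-type destabilizers with real roots and to pin down the exact subset. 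Independently, the base $B_R = HH_0([E/\Gamma])$ carries its own hyperplane arrangement: over a generic point the noncommutative surface is smoothed and the contraction is biregular, while along a wall a vanishing cycle survives and the contraction is not biregular. The simultaneous-resolution picture over $B_R$, the elliptic analogue of the Brieskorn--Slodowy description of ADE singularities, gives the bijection between these base walls and the $N^1$-walls, and a McKay-type identification of the $\Gamma$-deformation directions in $HH_0([E/\Gamma])$ with root hyperplanes supplies the bijection with the specified roots of $R^{ell}$.

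The hard part will be the second bijection in part 2), namely matching the walls in $HH_0([E/\Gamma])$ where biregularity fails with the prescribed roots of $R^{ell}$. The ADE-vanishing-cycle content is classical McKay theory for the fibre of type $R$, but one must enhance it by the elliptic classes $m[\pt] + n[E]$ and, more delicately, verify that the \emph{noncommutative} deformation directions recorded by $HH_0([E/\Gamma]) = \Ext^1_{[E/\Gamma]\times[E/\Gamma]}(\Delta_*\mc T_{[E/\Gamma]}, \mc O_\Delta)$ are arranged exactly along the root hyperplanes, with no spurious or missing walls. Controlling this requires a careful analysis of which wall-crossings are genuine versus fake (totally semistable) in the Bayer--Macrì correspondence, so that the count on all three sides agrees exactly; establishing this precise 1-1-1 matching, rather than a matching merely up to the expected combinatorial data, is where the main technical effort lies.
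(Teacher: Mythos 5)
The first thing to note is that the present paper does not actually prove Theorem \ref{thm:main_birational}: it is stated with attribution to the companion paper \cite{dehority_toroidal_springer}, and the surrounding text only records the ingredients that proof relies on. So your proposal can only be judged against that advertised toolkit, not against a written argument. At that level your outline is broadly consonant with it: relative stability conditions over $B_R = HH_0([E/\Gamma])$ built from \cite{Bayer_Lahoz_Macrì_Nuer_Perry_Stellari_2021} adapted to Van den Bergh's noncommutative $\PP^1$-bundles through \cite{rains19birational_arxiv}, rigidification by the framing, quasiprojectivity via determinantal positivity, semiprojectivity from the contracting $\C^\ast_\hbar$-action on the Lagrangian base $\A^{n(v)}$, and a relative Bayer-Macr\`i map $\ell : U \to N^1(M(v;\mc F)/\A^{n(v)})_\R$ organizing part 2) --- all of this matches what the paper says the companion paper does.

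What is missing is the one mechanism the paper explicitly names as the basis of the theorem: the analysis of the action of derived autoequivalences of elliptic surfaces (relative Fourier-Mukai transforms) on the stability manifold, following \cite{Lo_Martinez_2022,Liu_Lo_Martinez_2019,Yoshioka_2022}, together with the characterization of $U$ as a derived-category analogue of the Friedman chamber of polarizations on a projective elliptic surface. In your outline $U$ is produced by a generic double tilt plus a uniform support property, and part 2) is then argued by analogy (Brieskorn--Slodowy simultaneous resolution, McKay theory). But in the elliptic setting it is precisely the autoequivalence group --- which surjects onto a $\GL(2,\Z)$-type symmetry permuting the classes $[\pt]$ and $[E]$, cf.\ \cite{Burban_Schiffmann_2013} --- that organizes the chamber structure and forces the wall arrangement to be the elliptic root system $R^{ell}$ rather than an unstructured hyperplane arrangement. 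Without that input, the ``1-1-1'' matching in part 2), which you yourself identify as the hard step, has no organizing principle: matching spherical destabilizers with real roots and noncommutative deformation directions in $HH_0([E/\Gamma])$ with root hyperplanes remains an assertion by analogy. So your skeleton agrees with the advertised one, but the step you defer is exactly where the paper locates the real content, and the specific tool the paper names for that step is absent from your plan.
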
 

The above theorem provides a deformation of the restriction of the results of \cite{Bayer_Macrì_2014_MMP} to the neighborhood of certain singular fibers in certain isotrivial Lagrangian fibrations of $K3^{[n]}$ type of the form $S^{[n]}\to \PP^n$ induced by analytic inclusions $X_R\hookrightarrow S$ where $S$ is an isotrivial K3 surface which induce $X_R^{[n]} \to S^{[n]}$.

In the special case of $\Gamma = \{ 1\}$ and when $\mc F \simeq \mc O_{D_\infty}$, the space $B_R \simeq K_{num}(E)$ and has a $1$-dimensional subspace $B_{CM} \subset B_R$ such that for any $b \in B_{CM}$ the space $M_{\underline{\sigma}}(v; \mc F)$ is isomorphic to the elliptic Calogero-Moser space. In this same case, the fibers of the map $M_{\underline{\sigma}}(v; \mc O_{D_\infty})$ for special choices of $v$ and $\underline{\sigma}$ are good moduli spaces for the stacks studied in \cite{ben2008perverse}. 

The quasiprojectivity of the moduli space $M_{\underline\sigma}(v; \mc F)$ should be compared to the situation for deformations of varieties of $K3^{[n]}$-type where the generic deformations are non-projective and the total spaces of twistor lines fail to be K\"ahler. In this situation the identification between the sets of walls should be seen as an analogue of the main result of \cite{Namikawa_2015} which identifies walls in the space of Poisson deformations of a resolution $X$ of an  affine conical symplectic singularity $Y$ with walls in the real Picard group of $X$. When $X$ is a Nakajima quiver variety, these walls are also in bijection with a set of roots for the corresponding Kac-Moody algebra.  The deformations provided by Theorem \ref{thm:main_birational} are closely related to those of the nonabelian Hodge correspondence \cite{simpson1990nonabelian}. 

Recall that for a Poisson variety $(X, \{ -,-\})$ the space of deformations of $(X, \{ -,-\})$ is controlled by its Poisson cohomology group $HP^2((X, \{ -,-\}))$ which inherits an action of a group $\C^\ast_\hbar$ on $X$ that scales $ \{ -,-\}$ with weight $-\ell < 0$. We expect that 
\[ HH_0([E/\Gamma]) \simeq HP^2_+((M_{\underline{\sigma}}(v; \mc F), \{ -,-\}))\] 
where $HP^2_+(-)$ is the direct sum of positive weight spaces of $HP^2(-)$. The space $HH_0([E/\Gamma])$ coincides with the positive weight part of the tangent space to $\PP(\mc T_{[E/\Gamma]} \oplus \mc O_{[E/\Gamma]})$ in the moduli stack of noncommutative $\PP^1$-bundles over $[E/\Gamma]$. 

Theorem \ref{thm:main_birational} is based on a local version of the Bayer-Macr\`i map in families, which in our case is a map  $\ell : U \to N^1(M(v ;\mc F)/\A^{n(v)})_\R$. The subspace of stability conditions $U$ is an analogue in the derived category of the Friedman chamber $C \subset N^1(S)$ of polarizations $H$ on a projective elliptic surface $S\to C$ in which moduli spaces of Gieseker semistable sheaves are well behaved. Theorem \ref{thm:main_birational} is based on an analysis of the action of derived autoequivalences of elliptic surfaces on the stability manifold in \cite{Lo_Martinez_2022,Liu_Lo_Martinez_2019,Yoshioka_2022}. 

\subsection{Category of modules for $\mc A_\hbar(\mf g_{X_R})$} 

We expect that only for $R$ in the exceptional series \eqref{eq:exceptional_series} and perhaps also the non-simply-laced $R = G_2$ or $F_4$, does there exist an analogue with geometric origin of the algebras 
\[ Y_{\hbar}(\mf g_{KM}), ~~~ U_q(\widehat{\mf g_{KM}})\] 
which are the Yangian and quantum affine algebra, Hopf algebras which are filtered deformations over $\C[\hbar]$ or $\C[q^\pm]$ respectively of $\on{Maps}(C, \mf g_{KM})$, maps into a Kac-Moody algebra from $C = \A^1$ in the Yangian case or $C = \C^\ast$ in the quantum affine case. We specialize here to the case $R = A_{-1}$ and provide a definition of the algebra $\mc A_\hbar(\mf g_{T^*E})$ which is conjecturally a Hopf algebra and as previously discussed meant to be an analogue of the Yangian $Y_\hbar(\mf g_{KM})$. 

Let $\mc F_1$ and $\mc F_2$ be stable framing sheaves of positive rank with an inequality $\mu(\mc F_1) \le \mu(\mc F_2)$ of slopes. 
Let $M^{tf}(v; \mc F_1 \oplus \mc F_2)$ denote the stack of $\mc F_1 \oplus \mc F_2$-framed torsion-free sheaves on $\overline{X_{A_{-1}}}$. Owing to the fact that $\overline{X_{A_{-1}}} = E\times \PP^1$ is a ruled surface, a canonical filtration on torsion-free sheaves analogous to the Brosius filtration controls the stabilizers of points in this stack. 

\begin{thm}\label{thm:main_tensor}
There is an isomorphism 
\[\bigoplus_{m,n\in \Z} H^*_T(M^{tf}(v + m[\pt] + n[E]; \mc F_1 \oplus \mc F_2))_{loc} \simeq V_{T^*E, \mc F_1} \otimes V_{T^*E, \mc F_2}\] after localization of the base ring $H^*_T(\pt)$. 
\end{thm}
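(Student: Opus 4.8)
The plan is to realize the reducibly-framed moduli stack as carrying an auxiliary torus action scaling the two framing summands, and to identify its localized equivariant cohomology with the product of the irreducibly-framed moduli via equivariant localization. Since $\mc F_1$ and $\mc F_2$ are stable with $\mu(\mc F_1) \le \mu(\mc F_2)$, stability gives $\Hom(\mc F_2, \mc F_1) = 0$, so that $\Aut(\mc F_1 \oplus \mc F_2)$ is the solvable group $(\C^\ast \times \C^\ast) \ltimes \Hom(\mc F_1, \mc F_2)$. Its maximal torus $\C^\ast \times \C^\ast$ acts on $M^{tf}(v; \mc F_1 \oplus \mc F_2)$ by rescaling the framing isomorphism on each summand; the diagonal acts trivially, and I let $\C^\ast_\lambda$ be the complementary one-parameter subgroup, which I take to lie in $T$ (enlarging $T$ if necessary). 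The localization of $H^*_T(\pt)$ in the statement is the inversion of the weight of $\C^\ast_\lambda$.

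First I would identify the $\C^\ast_\lambda$-fixed locus, and this is where the Brosius-type filtration enters. For a framed torsion-free sheaf $(\mc E, \phi)$ on the ruled surface $\overline{X_{A_{-1}}} = E \times \PP^1$ with $\phi : \mc E|_{D_\infty} \xrightarrow{\sim} \mc F_1 \oplus \mc F_2$, the canonical filtration produces a saturated subsheaf $\mc E_1 \subset \mc E$ with $\mc E_1|_{D_\infty} \cong \mc F_1$ and torsion-free quotient $\mc E_2$ framed by $\mc F_2$; the slope inequality guarantees this subobject is unique, hence the filtration is functorial and $\C^\ast_\lambda$-equivariant. A point is $\C^\ast_\lambda$-fixed precisely when the extension $\ses{\mc E_1}{\mc E}{\mc E_2}$ splits compatibly with $\phi$, so that
\[ M^{tf}(v; \mc F_1 \oplus \mc F_2)^{\C^\ast_\lambda} \;\cong\; \bigsqcup_{v_1 + v_2 = v} M^{tf}(v_1; \mc F_1) \times M^{tf}(v_2; \mc F_2), \]
the disjoint union over numerical types compatible with the two framings.

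Given this, the Atiyah--Bott / Thomason localization theorem identifies $H^*_T(M^{tf}(v; \mc F_1 \oplus \mc F_2))_{loc}$ with the localized cohomology of the fixed locus, and the K\"unneth formula splits each factor $H^*_T(M^{tf}(v_1; \mc F_1) \times M^{tf}(v_2; \mc F_2))$ into a tensor product. Summing over all shifts $m[\pt] + n[E]$ and using additivity of the bigrading, $(m,n) = (m_1 + m_2, n_1 + n_2)$, matches the grading on $V_{T^*E, \mc F_1} \otimes V_{T^*E, \mc F_2}$ and yields the claimed isomorphism. The attracting cell of the filtration furnishes the explicit comparison map, realized as the geometric analogue of a stable envelope in the sense of Maulik--Okounkov \cite{Maulik_Okounkov_2019}.

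The main obstacle will be twofold. First, proving that the fixed locus is exactly the split products with no spurious components --- equivalently, that every $\C^\ast_\lambda$-fixed framed sheaf genuinely splits --- requires the canonicity and $\C^\ast_\lambda$-equivariance of the Brosius-type filtration together with careful control of the unipotent part $\Hom(\mc F_1, \mc F_2)$ of the stabilizers, whose nilpotent automorphisms must be shown to act trivially on localized cohomology. Second, applying K\"unneth and localization on a non-proper stack requires the moduli spaces $M^{tf}(v_i; \mc F_i)$ to be equivariantly formal or to admit affine pavings; this is supplied by their semiprojectivity and the proper Lagrangian fibration to an affine base, which guarantees that the localized $T$-cohomology is controlled by the fixed locus and that the tensor decomposition is clean.
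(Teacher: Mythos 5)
The first thing to say is that the paper never actually proves this theorem in the present text: it is stated in the overview (Section 1.3), and the only proof-relevant content there is the remark that ``a canonical filtration on torsion-free sheaves analogous to the Brosius filtration controls the stabilizers of points in this stack,'' with the detailed analysis of framed moduli deferred to the companion paper \cite{dehority_toroidal_springer}. So your proposal can only be measured against that hint, and in outline it matches it: a Brosius-type filtration, a decomposition of the reducibly-framed stack into products of irreducibly-framed moduli, and a Maulik--Okounkov-style framing-torus picture are surely the intended ingredients. However, there is a concrete error at the keystone of your argument. You invoke $\Hom(\mc F_2, \mc F_1) = 0$ and then declare that the canonical subsheaf is the one framed by $\mc F_1$, with uniqueness ``guaranteed by the slope inequality.'' The asymmetry runs the other way. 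Since $\mu(\mc F_1) \le \mu(\mc F_2)$, the possibly nonvanishing space is $\Hom(\mc F_1, \mc F_2)$ --- exactly the unipotent part of $\Aut(\mc F_1 \oplus \mc F_2)$ you correctly identified --- and a unipotent automorphism $f_1 \mapsto f_1,\ f_2 \mapsto f_2 + n(f_1)$ preserves the summand $\mc F_2$, not $\mc F_1$. Correspondingly, an $\mc F_1$-framed subsheaf is \emph{not} unique: already for a split framed sheaf $\mc E_1 \oplus \mc E_2$, the graph of any nonzero element of $\Hom(\mc E_1, \mc E_2(-D_\infty))$ is a second saturated subsheaf restricting to $\mc F_1$ on $D_\infty$, and this space is nonzero, e.g., for $\mc E_1 = \pi_1^*\mc F_1$, $\mc E_2 = \pi_1^*\mc F_2 \otimes \pi_2^*\mc O_{\PP^1}(k)$ with $k \ge 1$ and $\Hom(\mc F_1,\mc F_2) \neq 0$, where $\pi_1, \pi_2$ are the projections of $\overline{X_{A_{-1}}} = E\times\PP^1$. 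The canonical filtration must therefore have the $\mc F_2$-framed piece as the subobject --- this is also the relative Harder--Narasimhan direction on the framing divisor, and the unique filtration preserved by the full stabilizer group, which is what the paper's ``controls the stabilizers'' remark is pointing at. The mistake is fixable by reversing the filtration, but as written your uniqueness claim, and with it the identification of the fixed locus and the equivariance of the comparison map, is false.

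A second, independent problem is that your localization setup does not prove the statement as given. The theorem inverts elements of $H^*_T(\pt)$ only, whereas you enlarge $T$ by an auxiliary framing torus $\C^\ast_\lambda$ and invert its weight. Unless the paper's $T$ already contains the framing torus (plausible, given the remark that $T$ ``acts, potentially trivially, on moduli spaces,'' but nowhere asserted), an isomorphism over the $\lambda$-inverted ring does not descend to one over localized $H^*_T(\pt)$: you cannot specialize $\lambda$ after inverting it. The standard repair is the mechanism you only gesture at in your last sentence: either the Bia\l{}ynicki-Birula/attracting-cell (stable envelope) decomposition attached to $\C^\ast_\lambda$, or, more directly, the associated-graded map of the (corrected) canonical filtration, which exhibits $M^{tf}(v; \mc F_1 \oplus \mc F_2)$ as an iterated fibration with affine-space fibers modulo unipotent gerbes over $\bigsqcup_{v_1 + v_2 = v} M^{tf}(v_1; \mc F_1)\times M^{tf}(v_2; \mc F_2)$, giving the cohomological identification without the auxiliary parameter. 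That route is also what handles the foundational issue you flag but do not resolve: $M^{tf}$ is an Artin stack with positive-dimensional, non-reductive stabilizers, so naive Atiyah--Bott/Thomason localization and ``equivariant formality'' are not available off the shelf, and the splitting has to come from the filtration geometry rather than from a fixed-point theorem.
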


Owing to the Hopf algebra structure on $U(\mf g_{T^*E})$ we obtain a representation of $\mf g_{T^*E}$ on the vector space of $\bigoplus_{m,n\in \Z} H^*_T(M^{tf}(v; \mc F_1 \oplus \mc F_2))_{loc}$. Furthermore there exist well defined determinant line bundles $\lambda(F) \in N^1_{\C^\ast_\hbar}(M^{tf}(v; \mc F_1 \oplus \mc F_2))$ given $F \in K_{\C^\ast_\hbar}(\overline{X_{A_{-1}}})$. More generally this construction provides a tensor decomposition of the cohomology of moduli spaces for any framing bundle of the form $\mc F = \oplus_{i\in I} \mc F_i$ whose summands are pairwise non-isomorphic. The corresponding module depends only on the chern classes $A = \{ [\mc F_i]\}$ of the $\mc F_i$ and we define 
\[ V_A := \bigoplus_{m,n\in \Z} H^*_T(M^{tf}(v + m[\pt] + n[E]; \mc F))\] 
and finally define 
\[ \mc A_\hbar(\mf g_{T^*E}) \subset \prod_A \End(V_A) \] 
to be the algebra generated by cup product by all determinant line bundles $\lambda(F)$ and by the action of $\mf g_{T^*E}$. Owing to formulas for the cup product operators and Theorem \ref{thm:main_nonlog_thm} this gives a computationally effective definition of $\mc A_\hbar(\mf g_{T^*E})$ which can be determined on graded pieces according to the grading on the tensor product induced by the weight grading on all factors rather than the weight grading on the module $V_{A}$, which has infinite dimensional weight spaces once $|I| > 1$. 

There is an analogue of Theorem \ref{thm:main_tensor} over a base $B$ which is the complement of the diagonal in $\Jac(E) \times \Jac(E)$ supposing $[\mc F_1] = [\mc F_2]$ and equals $\Jac(E) \times \Jac(E)$ otherwise. There are analogous statements for the other surfaces $X_R$. 

Towards the study of these tensor products and the coproduct structure, in Theorem \ref{thm:higher_rank} we obtain a representation of higher dimensional toroidal extended affine Lie algebras on the tensor product of $n$ copies of $V_{T^*E}^{\Jac{}}$. 

\subsection{Quantum differential equation } 

The image of the Bayer-Macr\`i map is a half space 
\[ N^1(M(v ;\mc F)/\A^{n(v)})_{\R, >0} \subset N^1(M(v ;\mc F)/\A^{n(v)})_\R  \] 
consisting of those $ D\in N^1(M(v ;\mc F)/\A^{n(v)})_\R$
 such that $(D, C_E) > 0$ where $C_E$ is the class of a curve in one of the smooth fibers of the fibration 
 $M(v ;\mc F)\to A^{n(v)}$. 

The space $N^1(M(v ;\mc F)/\A^{n(v)})_\R$ and the space of stability conditions $U$ are closely related to the K\"ahler moduli space 
\[ \mc K_R  \subset N^1(M(v ;\mc F)/\A^{n(v)})\otimes \C^\ast  \] 

of $M(v ;\mc F)$. There are coordinates of $\mc K_R$ around cusps $\infty_C \in \overline{\mc K_R }$ which lie away from the hyperplane corresponding to the natural half space boundary 
$\on{Re}(\log(z)), C_E) = 0$ of  $N^1(M(v ;\mc F)/\A^{n(v)})_{\R, >0}$. There is a bijection between the set of of cusps and chambers of the wall-and-chamber structure of Theorem \eqref{thm:main_birational}. Expansion around the cusp $\infty_C$ give formal coordinates for the (reduced, equivariant, modified) quantum cohomology ring 
\[ \gamma_1 \ast_z \gamma_2 = \sum_{\beta \in \NE((M(v ;\mc F)/\A^{n(v)}))} z^\beta(\gamma_1 \ast_z \gamma_2)_\beta \in H^*_T(M(v ;\mc F))[[\NE(M(v ;\mc F)/\A^{n(v)})]]\] 
of $M_\sigma(v; \mc F)$ the birational model corresponding to the chamber $C$. 

The main result of \cite{dehority_quatum_connection} is an identification of the quantum multiplication of $M(v; \mc F)$ using the algebra $\mc A_\hbar(\mf g_{X_R})$. This gives an analogue of the main theorem of \cite{Maulik_Okounkov_2019} for these moduli spaces. 

Implicit in the wall-and-chamber structure of \ref{thm:main_birational} is an identification of the lattice of curves $N_1(M(v ;\mc F)/\A^{n(v)})$ with the root lattice $Q^{ell}$ of the elliptic root system. The root spaces of $\mf g_{X_R}$ are in bijection with roots of $R^{ell}$ and the canonical pairing between root spaces $\mf g_{\beta}$ and $\mf g_{-\beta}$ gives an element $e_{-\beta}e_{\beta}$ with an implicit sum over a basis of $\mf g_{\beta}$. 

In the general case, all curve classes except for those which are numerically equivalent to those living in a smooth fiber of the lagrangian fibration have their contributions to quantum multiplication given by a multiple of the operator $e_{\beta}e_{-\beta}$. 

\begin{thm}[\cite{dehority_quatum_connection}]\label{thm:main_QDE}
Up to a scalar operator, quantum multiplication by divisors in $M(v; \mc F)$ is given by
\begin{equation}
  \label{eq:main_qde_ze_abstract} c_1(\lambda) \ast_z - = c_1(\lambda) \cup (-)  - \hbar \sum_{\beta \in R^{ell}_{>0}\backslash \Z\delta_E  } (\lambda, \beta) \frac{z^\beta}{1 - z^\beta} e_{\beta}e_{-\beta} + \Xi(z^{\delta_E}).\end{equation}
\end{thm}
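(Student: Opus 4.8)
The plan is to adapt the strategy of Maulik--Okounkov \cite{Maulik_Okounkov_2019} for Nakajima quiver varieties to the holomorphic-symplectic geometry of $M(v;\mc F)$ and its Lagrangian fibration $M(v;\mc F)\to\A^{n(v)}$. First I would apply the divisor equation to isolate the purely quantum part of $c_1(\lambda)\ast_z$, writing it as a sum over effective curve classes $\beta\in\NE(M(v;\mc F)/\A^{n(v)})$, each weighted by the intersection number $(\lambda,\beta)$ and by the genus-$0$ reduced equivariant Gromov--Witten contribution of degree $\beta$. Since $M(v;\mc F)$ is holomorphic symplectic and fibered in abelian varieties over $\A^{n(v)}$, no rational curve deforms inside a smooth fiber; every rational curve contributing to the quantum product therefore lies over a point of the discriminant locus in the base. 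This separates the quantum corrections into a part supported on the fiber class $\Z\delta_E$, whose reduced contribution is collected into the residual term $\Xi(z^{\delta_E})$, and a part indexed by the transverse effective classes, which will produce the main sum.

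Next I would invoke Theorem \ref{thm:main_birational}(2) to identify these transverse classes with roots: that theorem gives a bijection between the walls of $N^1(M(v;\mc F)/\A^{n(v)})_\R$ at which the contraction fails to be biregular, the walls of $HH_0([E/\Gamma])$, and roots in $R^{ell}$. The extremal curve class dual to each such wall is thus labelled by a positive root, and---using the $\GL(2,\Z)$ action by (anti)autoequivalences on $\mf g_{T^*E}$ recorded in the overview to pass between walls realized for different $v$ and $\underline\sigma$---the full set of contributing transverse classes is matched with $R^{ell}_{>0}\setminus\Z\delta_E$. For a fixed root $\beta$ I would then reduce to a local computation by degenerating to the normal cone of the corresponding wall, equivalently by restricting the family of stability conditions $U$ to a transverse disk through the wall and applying the deformation of moduli spaces of Theorem \ref{thm:main_birational}, so that the contribution becomes a curve count in a Springer-type local model transverse to the wall, where the contracted $\PP^1$-bundle supports the extremal rational curve.

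The heart of the argument is the evaluation of this local contribution. Summing the degree-$d$ maps to the extremal rational curve over the wall and incorporating the divisor factor $(\lambda,d\beta)=d(\lambda,\beta)$ together with the reduced virtual count, the resulting geometric series assembles into $(\lambda,\beta)\,\frac{z^\beta}{1-z^\beta}$. The accompanying cohomological correspondence on $M(v;\mc F)\times M(v;\mc F)$, cut out by the two rulings of the contracted $\PP^1$-bundle, must then be matched with $e_\beta e_{-\beta}$, where $e_{\pm\beta}$ are the root generators of $\mf g_{T^*E}$ acting through Theorem \ref{thm:main_nonlog_thm}. This matching follows the pattern of \cite{Nagao_2007, DeHority_2020}: the extension/Hecke correspondences across the wall realize $e_{\pm\beta}$, and their composition $e_\beta e_{-\beta}$ is supported exactly on the locus of sheaves degenerating along the wall, which coincides with the support of the local correspondence. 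The overall constant and unit contributions, which the divisor equation does not fix, account for the ``up to a scalar operator'' clause, and $\Xi(z^{\delta_E})$ is computed separately from the reduced genus-$0$ theory of the abelian fibers.

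The hard part is this last identification of the local curve-counting correspondence with the operator $e_\beta e_{-\beta}$ together with its precise numerical coefficient. Unlike the quiver-variety case, the moduli spaces here are only semiprojective over $\A^{n(v)}$, and outside the case $\Gamma=\{1\}$ the entire construction is conditional on the conjectures of Theorem \ref{thm:main_birational} concerning $D_{\Coh}(\PP_{[E/\Gamma]}(\Xi))$; moreover one must control reduced Gromov--Witten theory in a noncompact equivariant setting and show that the abelian-fiber contributions genuinely collapse into a single function of $z^{\delta_E}$. A Maulik--Okounkov-style stable-envelope and $R$-matrix formalism for $\mc A_\hbar(\mf g_{T^*E})$, which would make both the coefficient and the operator $e_\beta e_{-\beta}$ manifest and simultaneously force flatness of the resulting quantum connection, is the natural device for completing this step rigorously.
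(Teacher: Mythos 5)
This theorem is not proved in the present paper at all: it is stated with the citation \cite{dehority_quatum_connection} as a result of the companion paper on the quantum connection, so there is no proof here against which your attempt can be checked. What the present paper does supply is the surrounding scaffolding, and your first two stages are consistent with it: the divisor equation, the localization of contributing rational curves over the discriminant of the Lagrangian fibration, and the matching of transverse extremal classes with $R^{ell}_{>0}\setminus\Z\delta_E$ via the wall/root correspondence of Theorem \ref{thm:main_birational}(2) all track the paper's own remarks (in particular the identification of $N_1(M(v;\mc F)/\A^{n(v)})$ with $Q^{ell}$ and the assertion that every non-fiber class contributes a multiple of $e_\beta e_{-\beta}$).

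As a proof, however, your proposal has a genuine gap, and you concede it yourself: the identification of the local wall-crossing correspondence with $e_\beta e_{-\beta}$, together with the exact coefficient $(\lambda,\beta)\,\frac{z^\beta}{1-z^\beta}$, \emph{is} the content of the theorem, and you defer it to a stable-envelope/$R$-matrix formalism for $\mc A_\hbar(\mf g_{T^*E})$ that does not exist; the paper only defines $\mc A_\hbar(\mf g_{T^*E})$ as an algebra of operators generated by determinant line bundles and the $\mf g_{T^*E}$-action, with no stable envelopes or Steinberg-type correspondences, and the moduli spaces here are only semiprojective over $\A^{n(v)}$ rather than Nakajima varieties, so the Maulik--Okounkov machinery cannot simply be invoked. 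Your handling of the fiber-class term also diverges from the route the paper declares: you propose to compute $\Xi(z^{\delta_E})$ directly from the reduced genus-$0$ theory of the abelian fibers, whereas the paper states that in type $A_{-1}$ the correction \eqref{eq:qde_E_corrections} is determined \emph{indirectly} — by combining the vertex-operator-algebra representation with monodromy invariance and the WDVV/flatness constraint — and even then only up to a scalar function. A direct fiber computation is moreover problematic on its face, since smooth abelian fibers contain no rational curves; the classes $m\delta_E$ are represented only by curves in singular fibers, which your degeneration-to-the-wall setup does not see.
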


The series defining the matrix coefficients of the operators \eqref{eq:main_qde_ze_abstract} converge in a region of $\mc K_R$ and analytically continue to all of $\mc K_R$. 

The general theory of quantum multiplication implies that quantum multiplication by divisors give rise to a connection on a trivial bundle over $\mc K_R$ with fiber $H^*_T(M(v; \mc F))$ defined by 
\[ \nabla_{\lambda} = \frac{d}{d\lambda}  - c_1(\lambda) \ast_z (-) \] 

In the special case of $R = A_{-1}$ we will use in \cite{dehority_quatum_connection} the vertex operator algebra methods of the present paper together with structural properties of monodromy invariance of quantum multiplication and the Witten–Dijkgraaf
–Verlinde–Verlinde (WDVV) equation 
\[ [\nabla_\lambda, \nabla_\mu ] = 0 \]
 to uniquely determine that 
\begin{equation}\label{eq:qde_E_corrections} \Xi(z^{\delta_E}) = -\hbar \sum_{m\in \Z} (\lambda,m \delta_E) \frac{z^{m\delta_E}}{1- z^{m\delta_E}} e_{m\delta_E}e_{-m\delta_E}\end{equation}
up to a scalar function. We expect the corresponding result to also hold outside of type $A_{-1}$. 

\subsubsection{Acknowledgements} The present project owes a tremendous amount to a great many people. Crucial to its completion were ideas of O. Schiffmann and also my advisor A. Okounkov.

\section{Toroidal algebras and superalgebras}

\subsubsection{Superalgebras and supergeometry} 

Given an even vector space $V$ of dimension $m$ we let $\C[\Pi V] = \C[\theta_1, \ldots, \theta_m] \simeq \Lambda^*(V^\vee) \simeq H^0(\mc O_{\A^{0|m}})$ denote its corresponding odd ring of functions. Let $\theta_I = \theta_{i_1} \cdots \theta_{i_k}$ for $I \subset \{ 1, \ldots, m\}$.  Given a function $f(\theta) \in H^0(\mc O_{\A^{0|m}})$ with expansion 
define $\int d\theta : \C[\Pi V] \to \C$ by $\int d\theta \theta_{\{1, \ldots,n\}} = 1$ and extend by linearity. Letting $\psi_1, \ldots, \psi_m$ be generators for $\C[\Pi V^\vee]$ we define the Fourier transform of $f \in \C[\Pi V]$ by 
\begin{equation}
  \label{eq:fourier} \widehat {f}(\psi) = \int d\theta e^{\sqrt{-1} \psi_i \theta_i} f(\theta).
\end{equation}

A Lie superalgebra $\mf g = \mf g_{\underline 0} \oplus \mf g_{\underline 1}$ is always assumed to have a purely even bracket \[ [ -,-]: \mf g_{\underline i}\otimes \mf g_{\underline j} \to \mf g_{\underline{ i + j}}.\] 

A (supercommutative) Poisson superalgebra is a pair $(A, \{ -,-\})$ consisting of a commutative associative superalgebra $A = A_{\underline 0} \oplus A_{\underline 1}$ with a Poisson bracket 
\[ \{ -, - \} : A\otimes A \to A \] 
giving a Lie superalgebra structure satisfying the Leibniz rule 
\[ \{ a, bc\} = \{ a,b\}c + (-1)^{p(a)p(b)}b \{ a,c\}. \] 

if $\mf g = H^0(\mc O_{\A^{0|m}}) = \C[\xi_1, \ldots, \xi_m]$ is a free supercommutative ring on $m$ odd generators then there is a Poisson bracket 
\[ \{ f,g\} = (-1)^{p(f) + 1} \sum_i \pd{f}{\xi_i}\pd{g}{\xi_i}. \] When $m = 2n$ we choose coordinates $\psi_{+,1},\psi_{-,1}, \ldots,  \psi_{+,n},\psi_{-,n}$ so that the bracket has the form 
\[ \{ f,g\} = (-1)^{p(f) + 1} \sum_i \pd{f}{\psi_{+,i}}\pd{g}{\psi_{-,i}}+ \pd{f}{\psi_{-,i}}\pd{g}{\psi_{+,i}}. \]

The Lie algebra structure denoted $\mf{po}(0|m)$ and known as the \emph{Poisson Lie superalgebra}. It has center spanned by $1 \in H^0(\mc O_{\A^{0|m}})$ and the quotient denoted $\mf h(0|m)$ is the \emph{Hamiltonian Lie superalgebra}. The pairing  $\langle -, -\rangle :  \mf g\otimes \mf g\to \C$ defined by 
\begin{equation}
  \langle f,g\rangle  = \int d\xi fg \label{eq:poisson_super_pairing} 
\end{equation}
is a non-degenerate supersymmetric invariant form on $\mf{po}(0|m)$.

Let $\C^{\ast a | b}$ denote an affine superscheme \cite{Carmeli_Caston_Fioresi_2011} whose underlying bosonic scheme is the algebraic torus $\C^{\ast a}$ and whose odd part is the parity shift of a vector bundle over $\C^{\ast a}$. Because all projective modules over $\C^{\ast a}$ are trivial we know that as a superscheme $\C^{\ast a | b}$ is a product $\C^{\ast a |0 }\times \A^{0|b}$. 
 
Now specialize to $ a= b = 2$. For $\alpha = (\alpha_{ev}, \alpha_{odd}) \in \C^2$ consider the Poisson bracket on $H^0(\mc O_{\C^{\ast a | b}})$ defined by 
\begin{equation}\label{eq:ev_plus_odd_pb}
\{ - , -\}_\alpha = \alpha_{ev} \{ -,-\}_{ev} +  \alpha_{odd}\{ -,-\}_{odd}
\end{equation}
where 
\[ \{f, g \}_{ev} = \pd{f}{\log s}\pd{g}{\log t} - \pd{f}{\log t} \pd{g}{\log s}\] is the even Poisson bracket in logarithmic coordinates and 
\[  \{ f, g\}_{odd} = (-1)^{p(f) + 1} \sum_i \pd{f}{\psi_{+,i}}\pd{g}{\psi_{-,i}}+ \pd{f}{\psi_{-,i}}\pd{g}{\psi_{+,i}}
  \] is induced from the bracket on $\mf{po}(0|2)$. 

  Let   
  \begin{equation}\label{eq:ham} \on{Ham}(\C^{\ast 2 | 2})_{\alpha} = \Q \langle \{ f, - \}_\alpha \rangle \subset \on{Der}(H^0(\mc O_{\C^{\ast 2 | 2}})) 
  \end{equation}
  denote the Lie superalgebra of Hamiltonian vector fields with respect to the Poisson bracket $\{ -,-\}_{\alpha}$. 

  \subsection{Toroidal algebras} 
\subsubsection{} 

Let $\mf g$ be a semisimple finite dimensional Lie algebra which for us will be allowed to be $\mf{gl}_0 = \on{Lie}(\{ 1 \} )$ or a finite dimensional Lie superalgebra with a supersymmetric invariant form. Let $A = H^0(\mc O_{\C^{\ast 2}_{s,t}}) = \C[s^{\pm}, t^{\pm}]$ be the ring of functions on an algebraic 2-torus $ \C^{\ast 2}_{s,t}$. Consider the space of global 1-forms $\Omega_{A} = H^0(\Omega_{\C^{\ast 2}_{s,t}}) =  A ds \oplus A dt$ and global vector fields $\mc T_{A} =  A \mrm d_s \oplus A \mrm d_t$ where $\mrm d_s = s\pd{}{s}$ and $\mrm d_t = t\pd{}{t}$. Let $\mrm k_s = s^{-1}ds$ and $\mrm k_t = t^{-1}dt$ be a distinguished basis for $\Omega^1_A$. Let $\mc K = \Omega^1_{A}/ d\Omega^0_{A}$ and $\mc D_{div} = \ker(\on{div}) \subset \mc T_{A}$ denote the quotient of 1-forms modulo exact forms and the subspace of divergence free vector fields respectively. Here $\on{div}(f_s \mrm{d}_s + f_t\mrm{d}_t) = s\pd{f_s}{ds} + t\pd{f_t}{dt}$ is proportional to the divergence in logarithmic coordinates.   The full toroidal Lie algebra, due to Kassel \cite{Kassel_1984} is the Lie algebra 
\[ \mc T(\mf g) = \mf g\otimes A \oplus \mc K \oplus \mc T_{A}.\] 
Its subalgebra 
\[ \mf t(\mf g) = \mf g\otimes A \oplus \mc K \oplus \mc D_{div}\] 
is a toroidal extended affine Lie algebra. 
Let $V_{a,b} = bx^ay^bd_x - ax^ay^bd_y$. 

These algebras and their relationship to vertex algebras have been extensively studied \cite{Moody_Rao_Yokonuma_1990,Billig_2006,Billig_2007}. We will use the basis described in \cite[\S 2]{Chen_Li_Tan_2021} on account of its relationship with \eqref{eq:main_bracket}. 

For $(a,b) \in \Z^2 \backslash \{(0,0)\}$ and $x\in \mf g$ let 
\begin{align}
  \label{eq:kab}
   \mrm k_{a,b} &= \begin{cases} \frac 1 b s^at^b\mrm k_s & \text{ if } b \neq 0  \\
    -\frac{1}{a} s^a \mrm k_t & \text{ if } b = 0, a \neq 0 
   \end{cases}\\
   \label{eq:dab}
   \mrm d_{a,b} &= as^at^b\mrm d_a - bs^at^b \mrm d_b\\
   x_{a,b} &= s^at^b \otimes x \label{eq:xab}
  \end{align}
We have the following commutation relations 
\begin{align}
[x_{a,b}, y_{c,d}]  &= [x,y]_{a+c,b+d} - \langle x, y\rangle (ad-bc)\mrm k_{a + b, c+d} + \delta_{a + c, 0} \delta_{b + d, 0} (a\mrm k_s + b \mrm k_t)\\
[\mrm d_{a,b}, x_{c,d}] &= -(ad-bc)x_{a+c, b+d} \\
[\mrm d_{a,b}, \mrm k_{c,d}] &= -(ad - bc)\mrm k_{a + c, b+  d} + \delta_{a + c, 0} \delta_{b + d, 0}(a \mrm k_s + b \mrm k_t)\\
[\mrm d_{a,b}, \mrm d_{c,d}] &= -(ad - bc)\mrm d_{a + c, b+  d}.
\end{align}

Define the following generating functions of elements of $\mc T(\mf g)$: 

\begin{align}
J_{x,a}(z) &= \sum_{n \in \Z}s^at^nz^{-n-1} \\
 \mrm{d}_{s,a}(z) &=  \sum_{n \in \Z} s^at^n\mrm{d}_s z^{-n-1}\\
 \mrm{d}_{t,a}(z) &=  \sum_{n \in \Z} -s^at^n\mrm{d}_t z^{-n-1}\\
 \mrm{k}_{a}(z) &=  \begin{cases}\sum_{n \in \Z} s^at^n\mrm{k}_t z^{-n-1} & \text { if } a \neq 0 \\
  \sum_{n \in \Z} t^n\mrm{k}_s z^{-n-1} & \text { if } a = 0 \end{cases}.
\end{align}

\subsubsection{Elliptic root systems}

An elliptic root system in a rank $\ell$ semidefinite real or complex inner product space $(F, \langle -, -\rangle)$ of dimension $\ell + 2$ is defined by a distinguished set of real roots $R_{re}\subset F$ and imaginary roots $R_{im} \subset \on{rad}(\langle -, -\rangle)$ whose union $R = R_{re} \cup R_{im}$ is the root system. A marking of $R$ is a choice of a line $G \subset \on{rad}(\langle -, -\rangle)$. This choice induces an affine root system $R_{aff} = R/(R\cap G)$ on the semidefinite space $F/G$ and from this, as usual, a finite root system $R_{fin} = R/R \cap \on{rad}(\langle -, -\rangle)$ on $F/\on{rad}(\langle -, -\rangle)$.  The pair $(R,G)$ is called in \cite{Saito_Takebayashi_1997} a marked elliptic root system. 
\begin{defn}
  A \emph{fully marked elliptic root system} is a triple $(R, \delta_1, \delta_2)$ where $R$ is an elliptic root system and $\delta_1$ and $\delta_2$ form a basis of $R_{im}$. 
\end{defn}

The definition of an elliptic root system differs slightly from the one in \cite{Saito_1985} in that the root system consists of the real and imaginary roots and the span of the real roots is not required to be the entirety of $F$. The main example of an elliptic root system according to the definition here but not in \cite{Saito_1985} is the one where $\ell = 0$ and $R_{fin}$ has Cartan type $A_{-1}$ corresponding to the empty root system on the zero dimensional positive definite vector space $\Bbbk^0$. Then $F \simeq \Bbbk^2$ and $R\simeq \Z^2$ is a lattice in $F$ of imaginary roots. 

Given a finite dimensional root system 
let 
\[ R^{ell} = \{ \beta + m \delta_1 + n\delta_2 \mid \beta \in R_{fin} \sqcup\{ 0 \} ,~  m,n \in \Z \} \subset F_{fin} \oplus \Bbbk \delta_1 \oplus \Bbbk \delta_2 \] 
denote the corresponding elliptic root system. 

\subsubsection{Elliptic Weyl groups} 

Given an elliptic root system $R^{ell} \subset F$ the Weyl group of $R^{ell}$ is the subgroup 
\[ W_R^{ell} \subset \on{Aut}(F, \langle - , -\rangle )\]  generated by reflections $w_{\beta}$ through $\beta \in R^{ell}_{re}$. 

The choice of marking $G$ determines a splitting 
\begin{equation}\label{eq:elliptic_weyl_split} 0 \to  \Z[R_{fin}] \to W_R^{ell} \to W_{R}^{aff} \to 0 \end{equation}
 via the induced action on $R_{aff}$ where $W_R^{aff}$ is the affine Weyl group associated to $R$. Recall also that $W_{R}^{aff}$ itself fits into a similar exact sequence
\begin{equation}\label{eq:affine_weyl_split} 0 \to \Z[R_{fin}] \to W_{R}^{aff} \to W_R \to 0. \end{equation}




\subsubsection{Imaginary reflections, Fourier transforms and Weyl groups} 

By definition for an elliptic root system $R^{ell}$ the elliptic Weyl group $W_{R}^{ell}$ preserves $\on{rad}(\langle-,-\rangle)$ pointwise. Geometrically at least in the elliptic orbifold cases corresponding to elliptic root systems such that 
\[ K_0([E/\Gamma]) \simeq Q_R^{ell}\]
when we have an action 
\[ W_R^{ell} \to \GL(K_0([E/\Gamma])) \] 
the elliptic Weyl group will only be a subgroup of those automorphisms induced by families of derived equivalences and there will be outer automorphisms of the corresponding toroidal and toroidal extended affine Lie algebras not arising from the Weyl group. 

Because of this in the elliptic orbifold or $A_{-1}$ cases we let 
\begin{equation}\label{eq:GW_def} 
IW_R^{ell} \subset GL(Q_R^{ell}\otimes \C)
\end{equation} 
denote the group generated by $W_R^{ell}$ and $\Aut(D_{Coh}([E/\Gamma]))$, where by convention the latter contains anti-autoequivalences. Because the group of autoequivalences surjects onto $\Aut(\on{rad}(\chi_{[E/\Gamma]}(-,-))) \simeq \GL(2, \Z)$ \cite{Burban_Schiffmann_2013} we have an exact sequence 
\[ 0 \to W_R^{ell} \to IW_R^{ell}\to \GL(2, \Z) \to 0.  \] 

when the elliptic root system is seen as the root system of a toroidal algebra $\mf g_{tor} \supset \Hom(\C^{\ast 2}, \mf g)$ the imaginary reflections correspond to automorphisms of the underlying torus $\C^{\ast 2}$. 

In particular, there are derived equivalences in $\Aut(D_{Coh}([E/\Gamma]))$ whose induced adjoint action on $GW^{ell}_R$ exchange the two left hand terms isomorphic to $\Z[R_{fin}]$ in \eqref{eq:elliptic_weyl_split} and \eqref{eq:affine_weyl_split} which are both normal subgroups of $W_R^{ell}$. 

A reflection group lying between $IW_R^{ell}$ and $W_R^{ell}$ which preserves vertex operator algebra representations of toroidal algebras is $W_{R,G}^{ell}$ which depends on a marking $G$. Define 
\begin{equation}\label{eq:monodromy_weyl_ell} W_{R,G}^{ell} := \{ w \in IW_R^{ell} \mid w G = G \}  \end{equation}
so that since the stabilizer of a line in a lattice is $\Z \rtimes \Z/2\Z \subset \GL(2, \Z)$ we have an exact sequence 
\[ 0 \to W_R^{ell} \to W_{R,G}^{ell} \to \Z\rtimes \Z/2\Z \to 0 \] 
and letting $W_R^{aff, \pm}\subset \Aut(\widehat{\mf h})$ denote the group generated by $W_R^{aff, \pm}$ and the reflection through the imaginary root hyperplane, we have an exact sequence 
\begin{equation}\label{eq:affine_translation_ell_seq}
0 \to \Z[R_{aff}] \to W_{R,G}^{ell} \to W_R^{aff, \pm} \to 0 
\end{equation}
analogous to \eqref{eq:elliptic_weyl_split}. When $R = A_{-1}$ we have $W_{A_{-1}}^{ell} = \{ 1 \}, IW_{A_{-1}}^{ell} = \GL(2, \Z)$ and $W^{ell}_{A_{-1},G} = \Z \rtimes \Z/2\Z$.

\subsubsection{Lie algebra in case $R = A_{-1}$}\label{ssec:gamin1}
In this section we introduce the main Lie algebra under consideration in this paper. 
Let $\mf g_{T^*E} = \bigoplus_{r,d} \mf g_{r,d}\oplus \Q \pmb{c}$ where $\mf g_{r,d}\simeq H^*(E)$.  This forms a Lie super algebra with bracket 
\begin{equation}\label{eq:main_bracket_non_intro} [w^{a,b}_{\gamma}, w^{c,d}_{\gamma'}] = -\det\begin{pmatrix}a & b \\ c & d \end{pmatrix} w^{a + c, b + d}_{\gamma \star \gamma'}  + \delta_{a+c, 0} \delta_{b + d, 0}\langle \gamma, \gamma'\rangle (a\pmb{c}_s + b\pmb{c}_t).
\end{equation}
\begin{prop}  \label{prop:lie}
The bracket \eqref{eq:main_bracket_non_intro} determines a Lie superalgebra structure. 
\end{prop}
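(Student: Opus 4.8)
The plan is to split the bracket \eqref{eq:main_bracket_non_intro} into its leading part
\[ [w^{a,b}_\gamma, w^{c,d}_{\gamma'}]_{\mathrm{lead}} = -\det\begin{pmatrix} a & b \\ c & d\end{pmatrix} w^{a+c,b+d}_{\gamma\star\gamma'} \]
and its central part valued in $\Q\pmb c_s\oplus\Q\pmb c_t$, and to treat the two separately. For the leading part I would identify $\bigoplus_{a,b}\mf g_{a,b}$ with the supercommutative torus algebra $\C[s^\pm,t^\pm]\otimes H^*(E)$ via $w^{a,b}_\gamma\mapsto s^at^b\otimes\gamma$, noting that $w^{a,b}_\gamma$ carries parity $p(\gamma)=\deg\gamma \bmod 2$. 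Under the Fourier transform \eqref{eq:fourier} the convolution $\star$ becomes the supercommutative product of $\C[\psi_+,\psi_-]$, i.e. $\widehat{\gamma\star\gamma'}=\widehat\gamma\,\widehat{\gamma'}$, so since $\{s^at^b,s^ct^d\}_{ev}=(ad-bc)s^{a+c}t^{b+d}$ the leading bracket is, up to sign, the even Poisson bracket $\{-,-\}_{ev}$ of \eqref{eq:ev_plus_odd_pb} extended $H^*(E)$-linearly. Being the tensor of a purely even Poisson structure on the torus with the supercommutative algebra $H^*(E)=\C[\psi_+,\psi_-]$, this is manifestly a Poisson superalgebra (the odd coefficients contribute only Koszul signs, which cancel cyclically); hence its bracket is automatically super-antisymmetric and satisfies the super-Jacobi identity.

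It then remains to incorporate the center. Super-antisymmetry of the full bracket is immediate: the determinant is antisymmetric under $(a,b)\leftrightarrow(c,d)$, the product satisfies $\gamma\star\gamma'=(-1)^{p(\gamma)p(\gamma')}\gamma'\star\gamma$, and the pairing is supersymmetric, $\langle\gamma,\gamma'\rangle=(-1)^{p(\gamma)p(\gamma')}\langle\gamma',\gamma\rangle$, so both the leading and the central terms obey $[X,Y]=-(-1)^{p(X)p(Y)}[Y,X]$. Since the center is annihilated by the bracket, the super-Jacobiator of the full bracket decomposes as the (already vanishing) Poisson super-Jacobiator of the leading part plus a purely central contribution. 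That central contribution is precisely the assertion that
\[ \omega(w^{a,b}_\gamma,w^{c,d}_{\gamma'})=\delta_{a+c,0}\delta_{b+d,0}\langle\gamma,\gamma'\rangle(a\pmb c_s+b\pmb c_t) \]
is a Lie superalgebra $2$-cocycle; this $\omega$ is exactly the projection to $\Q\mrm k_s\oplus\Q\mrm k_t\subset\mc K$ of the Kassel cocycle $\langle\gamma,\gamma'\rangle\,g\,df$ with $f=s^at^b$, $g=s^ct^d$, now carrying the coefficient $\langle\gamma,\gamma'\rangle$.

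The cocycle identity is the heart of the argument. Evaluating the Chevalley--Eilenberg sum $\sum_{\mathrm{cyc}}(-1)^{p(X)p(Z)}\omega([X,Y]_{\mathrm{lead}},Z)$ on $X=w^{a,b}_\gamma$, $Y=w^{c,d}_\eta$, $Z=w^{e,f}_\zeta$, all three $\delta$-constraints force $a+c+e=0$ and $b+d+f=0$, so one substitutes $a+c=-e$, etc. The $\pmb c_s$-coefficient then reduces to a multiple of $e(ad-bc)+a(cf-de)+c(eb-fa)$ and the $\pmb c_t$-coefficient to a multiple of $f(ad-bc)+b(cf-de)+d(eb-fa)$, both of which are identically zero as polynomials in the indices. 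The only structural input needed is that the triple form $\langle\gamma\star\eta,\zeta\rangle$ is cyclically supersymmetric, which holds because $(H^*(E),\star,\langle-,-\rangle)$ is a Frobenius superalgebra for the Berezin pairing $\langle\gamma,\gamma'\rangle=\int d\psi\,\widehat\gamma\,\widehat{\gamma'}$ of \eqref{eq:poisson_super_pairing}. I expect the main obstacle to be the sign bookkeeping: one must verify that the super-signs $(-1)^{p(X)p(Z)}$ from the differential combine with the signs of the cyclic supersymmetry of this Frobenius form to present all three terms with a common sign, so that the polynomial identity applies uniformly. A short check on the odd generators (e.g. $\gamma=\sigma_+$, $\eta=\sigma_-$, $\zeta=\sigma_+\sigma_-$) confirms that the signs align, and the general statement follows by multilinearity over a homogeneous basis of $H^*(E)$.
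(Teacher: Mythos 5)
Your proposal is correct and follows essentially the same route as the paper: both split off the central term, identify the leading bracket — via the convolution-to-product identification of $H^*(E)$ with a free supercommutative ring on two odd generators — with the purely even Poisson bracket on functions of $\C^{\ast 2|2}$ (the paper phrases this as the identification with $\on{Ham}(\C^{\ast 2|2})_{(1,0)}$), and treat the remaining term as a central extension by a $2$-cocycle. The only difference is one of completeness, not of method: the paper merely asserts the existence of the cocycle $\xi$, whereas you verify the super-antisymmetry, the Frobenius/cyclic-symmetry sign bookkeeping, and the determinant identities that make the cocycle condition hold, all of which check out.
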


\begin{proof}
  The algebra of convolution on $H^*(E)$ is a free supercommutative ring on two generators. 
The bracket \eqref{eq:main_bracket_non_intro} modulo central extension on $\mf g/ (\Q \pmb c_s \oplus \Q \pmb c_t)$ may be identified with $\on{Ham}(\C^{\ast 2 | 2})_{(1,0)}$ corresponding to the purely even Poisson bracket in \eqref{eq:ham}.  The relevant central extension comes from a cocycle $\xi : \mf g/ (\Q \pmb c_s \oplus \Q \pmb c_t) \otimes \mf g/ (\Q \pmb c_s \oplus \Q \pmb c_t) \to \Q^2 $. 
\end{proof}

Let $\mc Heis_{H^*(E), k}$ denote the Heisenberg-Clifford algebra modelled on $H^*(E)$ with central charge $k$ and $\mc F_{H^*(E), k}$ its vacuum module.  Denote $\mf g_0 = \mf g/ (\Q \pmb c_s \oplus \Q \pmb c_t)$. Let $B$ be a basis for $H^*(E)$. 

Given a slope $\mu = a/b  \in \Q\sqcup\{\infty \}$ we have a slope subalgebra 
\[\mf g_{\mu} = \Q \langle w_{\gamma}^{ka, kb}, \pmb c_\mu \rangle ~~ k \in \Z\backslash 0, \gamma \in B \] 
where $\pmb c_\mu = a \pmb c_s + b\pmb c_t$ where $\mu = a/b$ is written in reduced terms with $b \ge 0$. 

Any slope subalgebra $\mf g_\mu$ is isomorphic to $\mc Heis_{H^*(E)}$. The Lie algebra $\mf g_0$ admits an automorphism group of $\SL(2, \Z)$ acting on $\C^{\ast 2 | 2}$ acting on the even coordinates because this group preserves the Poisson form. This action acts via the vector representation of $\SL(2,\Z)$ on the image of the $\Q^2$-valued cocycle $\xi = (\xi_s, \xi_t)$ from the proof of Proposition \ref{prop:lie}. Given any linear functional $\Q\langle \pmb c_s, \pmb c_t\rangle  \to \Q$ corresponding to the action of $\mf g$ in an irreducible representation there is a slope subalgebra $\mf g_\mu$ on which the central extension $\pmb c_\mu$ vanishes and hence $\mf g_{\mu}$ is supercommutative. 

The reader may verify that the cocycle $\xi$ from the proof of Proposition \eqref{prop:lie} also provides a central extension of $\on{Ham}(\C^{\ast 2 | 2})_\alpha$ for any $\alpha$. Denote this central extension 
\[  \widehat{\on{Ham}}(\C^{\ast 2 | 2})_\alpha = \on{Ham}(\C^{\ast 2 | 2})_\alpha\oplus \Q \pmb c_s \oplus \Q \pmb c_t. \] 

For $\alpha_{odd} \neq 0$ the slope subalgebra $\mf g_\mu\subset  \widehat{\on{Ham}}(\C^{\ast 2 | 2})_\alpha$ defined in the same way are copies of the affine Lie superalgebra $\widehat{\mf{po}(0|2)}$.

\subsubsection{Jacobian toroidal Lie algebra}
\label{ssec:jac_g}

The Lie algebra $\mf g_{T^*E}$ after base change to $H^*_T(\Jac(E))$ admits an extension to $\mf g_{T^*E}^{\Jac}$. 

Let $H^*(\Jac(E)) = \C[\xi_+, \xi_-] \simeq \C[\A^{0|2}]$. Consider odd central elements $w^{0,0}_{\sigma_\pm}$ which acts by multiplication by $\xi_\pm$ respectively. Then 
\begin{equation}
  \label{eq:g_jac}\mf g_{T^*E}^{\Jac} = \mf g_{T^*E}\otimes H^*_T(\Jac(E)) \oplus  H^*_T(\Jac(E)) \big\langle w^{0,0}_{\sigma_+},w^{0,0}_{\sigma_-}, \pd{}{\xi_+}, \pd{}{\xi_-}\big \rangle \end{equation} 
is the Lie algebra generated by $\mf g_{T^*E}$ the new odd zero modes and the odd derivations on the base. 

Its subalgebra $\mf g_{T^*E}^{\Jac{} \tilde{}}\subset \mf g_{T^*E}^{\Jac{}}$
is defined as 
\begin{equation}
  \label{eq:g_jac}\mf g_{T^*E}^{\Jac{}\tilde{}} = \mf g_{T^*E}\otimes H^*_T(\Jac(E)) \oplus  H^*_T(\Jac(E)) \big\langle w^{0,0}_{\sigma_+},w^{0,0}_{\sigma_-}\big \rangle \end{equation} 
omitting the derivations. 

\subsubsection{Full Jacobian algebra }\label{ssec:full_jacobian_lie} 

We will also define in Definition \ref{defn:Ubullet} below the full Jacobian algebra $U^\bullet(\mf g_{T^*E}^{\Jac{}})$ which is of the form 
\[ U^\bullet(\mf g_{T^*E}^{\Jac{}}) = U^\bullet(\mf g_{T^*E}^{\Jac{}})_{\underline 0}\oplus  U^\bullet(\mf g_{T^*E}^{\Jac{}})_{\underline 1}\] 
where the even part $ U^\bullet(\mf g_{T^*E}^{\Jac{}})_{\underline 0}$ contains the universal enveloping algebra of the full toroidal Lie algebra of $\mf {gl}_0$ 
\[ \mc T(\mf{gl}_0) = \Omega^1_{\mc A}/d\Omega^0_{A} \oplus \mc T^1_{ A}\] 
consisting of the central term of the toroidal algebra and the Lie algebra of global derivations on $ A$. We expect that $U^\bullet(\mf g_{T^*E}^{\Jac{}})$ is closely related to a central extension of the subalgebra of 
\[ \mc D (H^0(\mc O_{\C^{\ast 2 | 2}})) \otimes H_T^*(\Jac E) \] 
, the algebra of superdifferential operators on $\C^{\ast 2 | 2}$ generated by the even derivations $\mrm{d}_s, \mrm{d}_t$ and by $\mf g_{T^*E}^{\Jac}$ with the central extension induced by the one on $\mf g_{T^*E}^{\Jac}$.

\section{Surfaces and correspondences}\label{sec:surfaces}

\subsubsection{Correspondences} 

Given $f: X \to Y$ proper, we follow the convention of \cite{Maulik_Okounkov_2019} that a pushforward 
\[ \tau_f(-) : H^*_T(X) \to H^*_T(Y)\] 
is defined by $\tau_f(\gamma) = (-1)^{(\dim X-\dim Y)/2}\int_{X/Y}(\gamma)$ 
where the base ring has a chosen $\sqrt{-1}$. When $f: X \to \pt$ is the map to a point we simply denote the map $\tau(-)$.  We define the intersection pairing $\langle -, -\rangle$ and Poincare duality $\on{PD}$ using this pushforward.

\subsection{Fourier transforms} 

We describe here a relationship between the Fourier-Mukai transform and the Fourier transform for differential operators in odd variables. Let $A$ be an abelian variety and $A^\vee = \Pic^0(A)$ its dual abelian variety over $\Bbbk$. When $\Bbbk = \C$ identify $A = V/\Lambda$ with $V = \C^g$ so that $A^\vee = V^\vee/\Lambda^\vee$. Let $\mc P \in \Pic(A\times A^\vee)$ is the Picard line bundle or universal line bundle. 
The Fourier-Mukai transform 
\begin{align}\label{eq:FM}
\Phi&: D^b(\Coh(A)) \to D^b(\Coh(A^\vee)) \\
\mc E &\mapsto \pi_{2*}(\pi_1^*(\mc E) \otimes \mc P)\nonumber
\end{align}
gives an equivalence of triangulated categories \cite{Mukai_1981}. Furthermore identifying $A^{\vee \vee} \simeq A$ we obtain $\Phi^{\vee}: D^b(\Coh(A^\vee))\to D^b(\Coh(A))$. These satisfy the relation $\Phi^\vee \circ \Phi = \on{inv}^*\circ [-g]$ where $\on{inv}: A\to A$ is the involution on the group structure. For convenience we assume that $A$ is principally polarized with polarization $L$ so that $A \simeq A^\vee$. 
\subsubsection{}
Using that $H^1(A,\Bbbk) = V$ and $H^i(A, \Bbbk) = \Lambda^k(V)$
We can identify 
\[ H^*(A, \Bbbk) =\Bbbk[\Pi V^\vee]  = \Bbbk[\theta_1, \ldots \theta_{2g}] .\] 
Under this identification we  have 
\[ \tau(-) = (-1)^{g/2} \int d\theta (-).\] 
Then we have 
\[ c_1(\mc P) = \sum_i \theta_i \chi_i \] 
where $\{\chi_i\}$ is the basis of $H^1(A^\vee)$ dual to $\{\theta_i\}$ under the identification $H^1(A^\vee) = V^\vee = H^1(A)^\vee$. 
Because $\mc T_A \simeq \mc O_A^{\oplus g}$ we have $\on{Td}(A) = 1$ and so \cite{Huybrechts_2006} there is an equality 
\[ \Phi^{H} = \tau( \pi_1^*(-) \on{ch}(\mc {P})) = \on{PD}\circ (-1)^{\frac{\deg(\deg + 1)}{2} } : H^*(A) \to H^*(A^\vee) =  H^*(A)^\vee = H^*(A)\] 
of the cohomological Fourier-Mukai transform $[\on{ch}(\mc P)]\circ - $ with Poincar\'e duality $\on{PD}$ up to sign. 

Furthermore we have an equality 
\[ \Phi^H(f) = (-1)^{g/2}\widehat{f}(-\sqrt{-1} \chi) \] 
where $\widehat{f}$ denotes the odd Fourier transform \eqref{eq:fourier}. 

Let 
\begin{align*}
- \star - : H^*(A) \otimes H^*(A) \to H^*(A) 
\end{align*}
denote the convolution map where $m: A\times A \to A$ so that $\gamma \star \gamma' = (-1)^{g/2} m_*(\gamma\boxtimes \gamma') = \tau_m(\gamma\boxtimes \gamma')$ is the multiplication map. There is an equality $ \tau_m = \on{PD}\circ m^* \circ \on{PD} $. The operator $m^*$ gives a cocommutative coproduct on $(H^*(A), -\cup -)$ which is part of the Hopf algebra structure on $H^*(A)$ on generators satisfying $m^*(\theta_i) = \theta_i \otimes 1 + 1 \otimes \theta_i$. Since $\langle \lambda, \mu \star \nu \rangle  = \langle m^*\lambda, \mu\otimes \nu\rangle $ we may identify 
$\theta_i \ast -$ with the operator 
$(-1)^{g/2}\partial_{\theta_i}$ on $\Bbbk[\theta_1, \ldots, \theta_{2g}]$.
 Given $f \in \Bbbk[\theta_1, \ldots, \theta_{2g}]$ we have the equality 
\begin{equation}
  \label{eq:FM_is_oddF}
\pd{\Phi^H(f)}{\chi_i}\left(-\chi\right) =\Phi^H(\theta_i f). 
\end{equation}

\subsection{Root systems and singular fibers}\label{ssec:root_systems}

\subsubsection{Weierstrass model} \label{sssec:weier}
For the type $A$ Cartan types in the Deligne series, the relevant equivariant elliptic surface $X_R$ will be studied using the Weierstrass model. The surface $\overline{X}_R$ is a blowup of $\overline{X}'_R$ which cut out by 
\[ Y^2Z = X^3 + a(t)XZ^2 + b(t)Z^3 \] 
from $\PP_{\PP^1_t}(\mc O(2)_X \oplus \mc O(3)_Y \oplus \mc O_Z)$ where $a(t)\in H^0(\mc O(4))$ and $b(t)\in H^0(\mc O(6))$ are taken from the following table: 

\begin{table}[h!]
  \centering
  \begin{tabular}{|l|l|l|}
  \hline
  $R$ &  $a(t)$ & $b(t)$ \\ \hline
     $A_0$       & $0$ & $t$ \\ 
     $A_1$       & $t$ & $0$ \\ 
      $A_2$      &  $0$ & $t^2$ \\ \hline
  \end{tabular}
  \caption{Weierstrass cubic coefficients for equivariant elliptic surfaces.}
  \end{table}

Let $X_R'$ be the restriction of the fibration to $\A^1_t = \PP^1_t \backslash \{ \infty \}$. Then $X_{A_0} = X_{A_0}'$, $X_{A_1} = \on{Bl}_{(0,0,0)}(X_{A_1}')$ is the resolution of an $A_1$ singularity and $X_{A_2} \on{Bl}_{(0,0,0)}(X_{A_2}') \to X_{A_2}'$ is the resolution of an $A_2$ singularity. 

  In the chart $Z = 1$ around the singular point $p_*$ over $t= 0$ with coordinates $(t,x,y)$  the action of $T$ has weights $(a_R, 2, 3)$ where $a_{A_0} = 6, a_{A_1} = 4, a_{A_2} = 3$ so that $e^\hbar [X:Y:Z,t] = [e^{2\hbar}X: e^{3\hbar} Y: Z, e^{a_R \hbar} t ]$.

\subsubsection{Cohomology}

Let $E_a$ denote the fiber of $X_R$ over $a \in \A^1$. 
An identification of $X_R$ with its relative compactified Jacobian fixes a curve $C_0 \subset X_R$ containing the point on $E_0$ corresponding to the trivial line bundle on that fiber. Let $C_1, \ldots, C_r$ denote irreducible curves in $E_0$ so that $C_0, \ldots, C_r$ is a basis for $H_2(X_R, \Z)$ with intersection pairing equivalent to the negative of the Cartan pairing on $\widetilde Q_R$. This $C_0$ is necessarily an affine root with Dynkin label $1$. In the starred cases let $C_\ast$ denote the unique positive dimensional component of $X_R^\csth$ corresponding to the central node of the Dynkin diagram. Let $p_{i} \in X_R^T$ and $p_{ij}\in X_R^T$ respectively denote an isolated fixed point lying only on $C_i$ and lying at the intersection of $C_i$ and $C_j$. In the unstarred cases let $p_*$ denote the unique fixed point which is a singular point of $E_0$, while in the starred cases let $p_*$ be a point on $C_*$.    A basis for $H^*_\csth(X_R)$ is given by $1, D_0, \ldots, D_r$ so that $\langle C_i , D_j\rangle = \delta_{i,j}$. 

We now calculate tangent weights at fixed points in the starred cases. The normal bundle to $C_*$ has weight $\hbar$. 
Given an isolated fixed point $p_{\alpha}\in X_R^\csth$ where $\alpha = i$ or $ij$ let $\ell(p_\alpha) \ge 1$ denote the length of the chain of rational curves connecting $p_\alpha$ to $C_*$. Then the tangent weights at $p_\alpha$ are 
\[ w_\alpha^1 = - \ell(p_\alpha)\hbar  \text{ and }  w_\alpha^2 = (\ell(p_\alpha) + 1)\hbar\]
which follows from the case of an $A_n$ surface. To make formulas nicer we also consider $w_*^1 = 0, w_*^2 = \hbar$ despite the fact that $p_*$ isn't isolated. 

For the unstarred cases we use the Weierstrass model and coordinates on the blownup surface. 
Let $C_0'$ be the singular curve in $X_R'$ and $C_0$ be its strict transform in $X_R$. Calculating the $T$ action in different charts gives the weights at the other $p_i$. We are also able to calculate various cohomology classes in the fixed basis  including $D_i \in H^2_T(X_R, \Z) \simeq  \widehat Q^\vee$ giving a basis corresponding to a set of simple coroots for the affine Lie algebra so that $\langle D_i, C_j\rangle = \delta_{i,j}$ and $\langle D_i, p_* \rangle = 0$ which is identified with the $i$th fundamental coweight $D_i = \omega_i^\vee$ of the affine Lie algebra. The results are summarized in the following tables. 

\begin{table}[h!]
  \centering
  \begin{tabular}{|l|l|l|l | l| }
  \hline
  $R$ &  Fixed pt.    & Coords.  & Weights  \\ \hline
     $A_0$ & $p_*$ & $x,y,t$ &  $w_*^1 = 2\hbar, w_*^2 = 3\hbar$  \\ 
      & $p_0$ & $x,z,t$  &  $w_0^1 = -\hbar, w_0^2 = 6\hbar $ \\ 
     $A_1$    & $p_*$ & $x,y/x,t/x$    & $w_*^1 = \hbar, w_*^2 = 2\hbar$ \\ 
     & $p_0$ & $x,z,t$  & $w_0^1 = -\hbar , w_0^2  = 4\hbar $  \\ 
     & $p_1$ & $x/t,y/t,t$  & $w_1^1 = -\hbar, w_1^2 = 4\hbar$ \\ 
      $A_2$    & $p_*$ & $x,y/x,t/x$    & $w_*^1 = \hbar, w_*^2 = \hbar$   \\
      & $p_0$ & $x,z,t$  & $w_0^1 = -\hbar, w_0^2 = 3\hbar $ \\ 
      & $p_1$ & $x/y,y,t/y-\sqrt{-1}$  & $w_1^1 = -\hbar, w_1^2 = 3\hbar $ \\ 
      & $p_2$ & $x/y,y,t/y + \sqrt{-1}$  & $w_2^1 = -\hbar, w_2^2 = 3\hbar $ \\ 
       \hline
  \end{tabular}
  \caption{Fixed points in resolved Weierstrass models, a system of coordinates vanishing at those fixed points and tangent weights at the fixed points. }
  \end{table}

  \begin{table}[h!]
    \centering
    \begin{tabular}{|l|l|l|l | }
    \hline
    $R$ &  Basis / $H_T(\mrm{pt})$   & Cohomology classes in fixed basis \\ \hline
       $A_0$ & $[p_*], [p_0]$ & $[C_0] = 1/\hbar [p_*]  -1/\hbar [p_0]$   \\ 
       &&$D_0 = \hbar/6 [p_0] $\\
       $A_1$    & $[p_*], [p_0], [p_1]$ & $[C_i] = 1/\hbar[p_*]  -1/\hbar[p_i]$    \\ 
       && $D_i = \hbar/4 [p_i] $\\
       $A_2$    & $[p_*], [p_0], [p_1], [p_2]$ & $[C_i] = 1/\hbar[p_*] - 1/\hbar[p_i]$    \\
       && $D_i = \hbar/3 [p_i] $\\

       $D_4/E_*$ & $[p_*], [C_*], [p_i], [p_{ij}]$ & $[C_*] = [C_*], ~ [C_i] = 1/w_{s(i)}^2[p_{s(i)}] + 1/w_{e(i)}^1[p_{e(i)}] $   \\ 
       && $ D_i = \omega_i^\vee$\\
         \hline
    \end{tabular}
    \caption{Cohomology classes in fixed basis in localized $H^*_T(-)$.  In the $D_4/E_*$ cases the ordering $\beta \succ \alpha$ means that $p_\beta$ lies further from $C_*$ than $p_\alpha$ and $C_i$ is a curve connecting $p_{s(i)}$ to $p_{e(i)}$ where $e(i) \succ s(i)$. } 
    \end{table}

These weights determine the intersection pairing on the surface. Let 
\[ \int_{X_R}: H^*_\csth(X_R) \to \C[\hbar^\pm] \] 
denote the pushforward defined by localization.

\subsubsection{Convolution and Fourier-Mukai transform}\label{ssec:FM_conv}

The Fourier-Mukai transform \eqref{eq:FM} admits a relative version for minimal elliptic surfaces\cite{Bridgeland_1998}. See also \cite{bartocci2009fourier}. By conjugating the cup product with the Fourier transform we can interpret as providing an analogue of a Hopf algebra structure for elliptic surfaces; so that $\star(-,-)$  agrees with the adjoint of the coproduct up to a sign. We only explicitly write a formula for $\Phi^H(-)$ in the equivariant cases over $\A^1$. In types $D_4$ and $E_*$ there are isomorphisms
\[ H^*_{T'\times \Gamma}(E)\simeq H^*_{T'\times \Gamma}(T^*E) \simeq H^*_{T}(X_{R}) \] 
provided by localization and the McKay correspondence, and the relative Fourier transform arises as the action of a element of the derived autoequivalences of the elliptic orbifold $[E/\Gamma]$.

For type $A_0, A_1, A_2$, 
recall that the identification of $X_R$ with its relative compactified Jacobian fixes a simple positive root $\alpha_0$ with Cartan label 1 corresponding to a curve $C_{0}$, and also a identity section $\Theta$ with $\langle \Theta, C_{\alpha_0}\rangle_{\overline S} = 1$ intersecting $C_{\alpha_0}$ in a point $F_0^{sm}\cap S^T$ whose cohomology class is denoted $\mrm{pt}_0$ so $\hbar\Theta = \mrm{pt}_0$ . Thus $\Theta \in \widehat Q^\vee_R$ corresponds to the coroot $d$.   For $q \in Q^\vee \subset \widetilde Q^{\vee}$ we have 
\begin{equation}
  \label{eq:equiv_FM}
  \Phi^H(n \mathrm{pt}_0 + m d + q) = -q + m \mathrm{pt}_0 + (- n ) d .
\end{equation}

\subsubsection{Picard lattice and ample cone} 
\label{ssec:pic_amp_central}
The rational Neron-Severi group $\NS(\overline{X_R})_\Q$ of $\overline{X_R}$ is dual to $\on{NE}(\overline{X_R})_\Q$ which is generated by components of singular fibers at zero and infinity and by sections of the elliptic fibration. In all cases $R$ from \eqref{eq:exceptional_series} we have that $\overline{X_R}$ has a description as a blowdown of a surface $\overline{S_R}$ fitting into a diagram 
\[ 
\begin{tikzcd}
{[E\times\PP^1/\Gamma]} \arrow[d] & \overline{S_R} \arrow[ld] \arrow[rd] &                \\
(E\times \PP^1)/\Gamma            &                                      & \overline{X_R}
\end{tikzcd}\] 
where $\Gamma$ is either the automorphism group of $E$ defining $R$ as the Cartan type of the star-shaped Dynkin diagram of the local elliptic orbifold or it is the group  $\Gamma$ corresponding to the dual root system in \eqref{eq:exceptional_series} where duality corresponds to taking centralizer in $E_8$. Concretely we have dual pairs 
\[ (A_0, E_8), (A_1, E_7), (A_2, E_6), (D_4, D_4).\]   
Thus $\overline{X_R}$ is a minimal rational elliptic surface. Also
we have a semiorthogonal decomposition 
\[ D^b(\overline{S_R}) = \langle \pi^* D^b(\overline{X_R}) , \mc A\rangle \]
where $\mc A$ is a Serre subcategory admitting a full strong exceptional collection of length equal to the number of components of the exceptional locus of $\pi: \overline{S_R} \to \overline{X_R}$. We have an equality 
\begin{equation*}
   \NS(\overline{S_R}) = \NS(\overline{X_R}) \oplus K_{num}(\mc A)
\end{equation*}
and an inclusion $\widehat{ R + R'} \hookrightarrow \NS(S_R)$ of the affine root system corresponding to the direct sum of $R$ and $R'$ (the dual root system in the Deligne exceptional series) corresponding to the affine sublattice generated by exceptional curves. The total Picard rank of $\overline{X_R}$ is 10. Owing to the description of $\overline{X_R}$ as the blowup of $\PP^2$ in 9 points we have an identification 
\[\NS(\overline{X_R}) = \Pic(\overline{X_R}) \simeq \RN{1}_{1,9} \]
where $\RN{1}_{1,9} = \langle H, E_1, \ldots, E_9\rangle$ is the intersection pairing with matrix $\on{diag}(1, -1,\ldots, -1)$. In this description the  fiber is class $3H - E_1 - \cdots - E_9$.

\section{Toroidal vertex operator superalgebras}
Let $D_w = w \pd{}{w}$ denote the multiplicative derivative. Given an operator $A$ we let $A^{(j)} = A^j/j!$. 

\subsection{Coordinated modules}

It is a recurring theme in vertex representations of toroidal algebras that the multiplicative fields and OPEs, and relatedly the modified vertex operators, are the most natural formulation. The first indications are the fields of the form $z^kA(z)$ for a field $A(z)$ which show up in the formulas in \cite{Billig_2007}, but see especially \cite{Chen_Li_Tan_2021,Chen_Li_Yu_2022}.

Let $(V, Y, \vac, \omega) $ denote a conformal vertex algebra with central charge $c$. Given $\phi(x,z) = xe^z$ among other choices, \cite{Li_2011} defines the notion of a $\phi$-coordinated module for $V$ to study some of the local fields with multiplicative-style OPEs that arise in the study of vertex algebras such as in \cite{Lepowsky_2000,Zhu_1990}. In the sequel $\phi = xe^z$ will always be assumed. 

\begin{defn}
A $\phi$-coordinated $V$-module is a space $W$ and a linear map 
\[ Y_W(-, z) : V \to \Hom(W, W((z)))\] 
such that $Y_W(\vac, z) = \Id_W$, and for $u,v \in V$ there exists $k\in \Z_{\ge 0}$ such that 
\begin{align*}
(z_1-z_2)^k Y_W(u, z_1) Y_W(v, z_2) &\in \Hom(W,W((z_1,z_2)))\\
(z_2e^{z_0} - z_2)^k Y_W(Y(u,z_0)v, z_2) &= ((z_1-z_2)^k Y_W(u, z_1) Y_W(v, z_2))\big|_{z_1 = z_2e^{z_0}}. 
\end{align*}
\end{defn}

In particular if we have an OPE of the form 
\begin{equation}\label{eq:VOA_OPE}
Y(u, z)Y(v,w) \sim \sum_{j \ge 0} \frac{Y(u_{(j)} v , w)}{(z-w)^{j+1}}
\end{equation}
corresponding to the bracket 
\begin{equation}\label{eq:VOA_bracket}
  [Y(u, z)Y(v,w)] = \sum_{j \ge 0} Y(u_{(j)} v , w)\partial^{(j)}_w \delta(z-w)
  \end{equation}
then given a $\phi$-coordinated module $(W, Y_W)$ one has the commutation relation 
\begin{equation}\label{eq:VOA_coord_bracket}
[Y_W(u,z),Y_W(v,w)] = \sum_{j \ge 0} Y_W(u_{(j)} v, w) D_w^{(j)} \delta\left(\frac{w}{z} \right).
\end{equation}

Following \cite{Zhu_1990}, by letting $\widetilde{\omega} = \omega - \frac{1}{24} c\vac$ and given $v\in V$ defining 
\[ Y[v,z] = Y(e^{zL(0)}v, e^z - 1)\] 
the data $(V, Y[-,z], \vac, \widetilde{\omega})$ defines a vertex algebra and 
\[ T(\vac) = \vac, T(\omega) = \widetilde{\omega}, T(a) = a\] 
for primary $a \in V$ together with compatibility with the Virasoro algebra defines an isomorphism from $(V, Y, \vac, \omega)$ to $(V, Y[-,z], \vac, \widetilde{\omega})$. 

Given $a\in V$ write 
\begin{equation}\label{eq:y_brack}
  Y[a,z] = \sum_{n \in \Z} a[n]z^{-n-1}
\end{equation}
so that the action of $T$ on states formed by action of Fourier coefficients is calculated by
\begin{equation}
\label{eq:t_on_prod}
T(a_n b) = T(a)[n] T(b)
\end{equation}
using that if $a$ is homogeneous of weight $\wt a$ we have 
\begin{equation}\label{eq:a_brackn_formula}
a[n] = \Res_z\left(Y(a,z)\log(1 + z)^n(1+z)^{\wt a - 1}\right).
\end{equation}

Given an (ordinary) $V$-module $(W, Y_W)$ the above definition produces a $\phi$-coordinated module by the following proposition.  
\begin{prop}[{\cite[\S 6]{Chen_Li_Tan_2021},\cite{Lepowsky_2000}}]\label{prop:modified_gives_coord} Define a $\End(W)$-valued field $X(v,z)$ by the formula  
  \begin{equation}\label{eq:X_defn} X^\phi(v, z) := Y_W(z^{L(0)} T(v), z) \end{equation}
The data $(W, X^\phi(-, z))$ defines a $\phi$-coordinated $V$-module structure on $W$. 
\end{prop}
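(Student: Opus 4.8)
The plan is to reduce the statement to the analogous assertion for Zhu's square-bracket vertex algebra $(V, Y[-,z], \vac, \widetilde{\omega})$ and then transport it along the isomorphism $T$. Writing $X_W(a, z) := Y_W(z^{L(0)} a, z)$ so that $X^\phi(v,z) = X_W(T(v), z)$, I would first show that $(W, X_W)$ is a $\phi$-coordinated module for $(V, Y[-,z], \vac, \widetilde{\omega})$. Granting this, since $T$ is an isomorphism of vertex algebras it satisfies $T(Y(u,z_0)v) = Y[T(u), z_0]T(v)$, so the $\phi$-coordinated module axioms for $X^\phi$ evaluated on $u,v$ are literally those for $X_W$ evaluated on $T(u), T(v)$; the vacuum axiom $X^\phi(\vac,z) = \Id_W$ is immediate from $T(\vac) = \vac$, $L(0)\vac = 0$ and $Y_W(\vac,z) = \Id_W$.

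For the reduced statement, the truncation and vacuum axioms are routine. Decomposing $a \in V$ into $L(0)$-eigenvectors reduces to homogeneous $a$, for which $z^{L(0)}a = z^{\wt a}a$ and hence $X_W(a,z) = z^{\wt a} Y_W(a,z) \in \Hom(W, W((z)))$. Locality then follows by pulling the monomial prefactors $z_1^{\wt a} z_2^{\wt b}$ out of $(z_1-z_2)^k X_W(a,z_1)X_W(b,z_2)$ and invoking weak commutativity of the ordinary module $(W, Y_W)$.

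The main step is the multiplicative associativity axiom, and this is where I would spend the effort. The key input is the conjugation formula $z^{L(0)} Y(a, x) z^{-L(0)} = Y(z^{L(0)}a, z x)$ in $V$, which follows by exponentiating $[L(0), Y(a,x)] = Y(L(0)a, x) + x\partial_x Y(a,x)$. Using the definition $Y[u,z_0]v = Y(e^{z_0 L(0)}u, e^{z_0}-1)v$ and conjugating the inner vertex operator by $z_2^{L(0)}$, I would rewrite
\[ X_W(Y[u,z_0]v, z_2) = Y_W\big(Y((z_2 e^{z_0})^{L(0)} u,\; z_2 e^{z_0} - z_2)\, z_2^{L(0)} v,\; z_2 \big). \]
I would then apply the weak associativity of the ordinary module $(W, Y_W)$ with inner variable $y := z_2 e^{z_0} - z_2$ and insertions $a = (z_2 e^{z_0})^{L(0)} u$, $b = z_2^{L(0)} v$. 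Since the additive substitution $z_1 = z_2 + y = z_2 e^{z_0}$ coincides with the desired multiplicative substitution, and since under $z_1 = z_2 e^{z_0}$ one has $(z_2 e^{z_0})^{L(0)} u = z_1^{L(0)}u$ so that $Y_W((z_2 e^{z_0})^{L(0)}u, z_1) = X_W(u, z_1)$, this yields exactly
\[ (z_2 e^{z_0} - z_2)^k X_W(Y[u,z_0]v, z_2) = \big((z_1-z_2)^k X_W(u,z_1) X_W(v, z_2)\big)\big|_{z_1 = z_2 e^{z_0}}. \]

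The hard part will be the bookkeeping in this last step. The insertion $a = (z_2 e^{z_0})^{L(0)}u$ fed into the ordinary associativity identity is itself a formal series in $z_0$ and $z_2$, so I must argue that weak associativity — a priori stated for fixed $a \in V$ — extends by $\C$-linearity and continuity to such series-valued insertions. I must also check that the expansion conventions match: $y = z_2(e^{z_0}-1)$ is a series in $z_0$ with vanishing constant term, so $z_1 = z_2 e^{z_0}$ is precisely the expansion of $(z_1-z_2)^k Y_W(a,z_1)Y_W(b,z_2)$ in the region where $z_1$ is near $z_2$, consistent with the branch used on both sides. This compatibility of expansions is exactly the content of Li's theorem, and I would cite \cite{Li_2011} for its verification.
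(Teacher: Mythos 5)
The paper does not actually prove this proposition: it is quoted from \cite[\S 6]{Chen_Li_Tan_2021} and \cite{Lepowsky_2000}, with the underlying machinery due to \cite{Li_2011}. So the relevant comparison is against the argument in those references, and your sketch is essentially that argument: factor $X^\phi(v,z) = X_W(T(v),z)$, prove the statement for Zhu's square-bracket algebra $(V, Y[-,z], \vac, \widetilde{\omega})$, and transport along the isomorphism $T$ using $T(Y(u,z_0)v) = Y[T(u),z_0]T(v)$. The transport step, the vacuum and truncation checks, the locality check (weak commutativity of the ordinary module puts the regularized product in $\Hom(W,W((z_1,z_2)))$, and monomial prefactors do not disturb this), and the conjugation computation $z_2^{L(0)}Y[u,z_0]v = Y\bigl((z_2e^{z_0})^{L(0)}u,\, z_2e^{z_0}-z_2\bigr)z_2^{L(0)}v$ are all correct.

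Two remarks on the step you identify as hard. First, the series-valued-insertion worry is easier than you make it: decomposing $u$ into $L(0)$-eigenvectors, the insertion $(z_2e^{z_0})^{L(0)}u = (z_2e^{z_0})^{\wt u}\,u$ is an invertible scalar series times a \emph{fixed} vector of $V$, so weak associativity applies verbatim after scaling; no linearity-and-continuity argument is needed. Second, the expansion-compatibility step you defer to \cite{Li_2011} is genuinely the entire technical content of the proposition, not a routine check: ordinary weak associativity carries the factor $(x_0+x_2)^l$ with $l$ depending on $u$ \emph{and} on the vector $w\in W$ (so it is not uniform in $w$, whereas the $\phi$-coordinated axiom demands a $k$ depending only on $u,v$), its right-hand side is expanded with $x_0$ dominant, and the naive termwise substitution $x_0 = z_2(e^{z_0}-1)$ into that expansion does not converge; reconciling this with the $(z_1-z_2)^k$-regularized product and the substitution $z_1 = z_2e^{z_0}$ is precisely what Li's theorem accomplishes. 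Deferring that step to the citation is legitimate here — the paper defers the whole proposition to the same literature — but you should be aware that, as written, your proof is a structured framing of that citation rather than an independent replacement for it.
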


In general, the fields of the form 
\[ X(v, z) = Y(z^{L(0)}v, z) \] 
are known as modified vertex operators (from e.g. \cite{Frenkel_Lepowsky_Meurman_1989}).

As a special case of Proposition \ref{prop:modified_gives_coord} since $V$ is a module over itself, $X^\phi(v, z)$ defines a $\phi$-coordinated $V$-module structure $(V, X^\phi(-,z))$ and we have brackets 
\begin{equation}\label{eq:x_bracket} [X^\phi(u,z), X^\phi(v,w)] = \sum_{j \ge 0} X^\phi(u_{(j)}v, w) D_w^{(j)}\delta\left( \frac w z\right).\end{equation} 

\subsubsection{}
Consider the Lie algebra $\mf g_{A_{-1}}$ from \ref{ssec:gamin1}. Combine its elements into generating series 
\begin{equation}\Upsilon_m(\gamma, z) = \sum_{n \in \Z} w_{\gamma}^{m,n} z^{-n}. \label{eq:wgen}\end{equation}
The defining relation \eqref{eq:main_bracket} 
becomes 
\begin{multline} 
  [\Upsilon_m(\gamma, z), \Upsilon_{m'}(\gamma',w)] = m D_w \Upsilon_{m + m'}(\gamma \star \gamma', w)\delta(w/z) \\
  +(m+m')\Upsilon_{m+m'}(\gamma \star \gamma',w)D_w \delta(w/z) 
  +\delta_{m+m',0} \langle \gamma, \gamma' \rangle \pmb{c}D_w\delta(w/z)
\label{eq:bracket_mult_gen_fn}.
\end{multline}
This may equivalently be expressed via the multiplicative-style operator product expansion 
\begin{multline} 
  \Upsilon_m(\gamma, z) \Upsilon_{m'}(\gamma',w) \sim m D_w \Upsilon_{m + m'}(\gamma \star \gamma', w)\frac{w}{z-w} \\
  +(m+m')\Upsilon_{m+m'}(\gamma \star \gamma',w)D_w \frac{w}{z-w}
  +\delta_{m,-m'} \langle \gamma, \gamma' \rangle \pmb{c}D_w\frac{w}{z-w} 
\label{eq:ope_mult_gen_fn}.
\end{multline}

\subsubsection{Free super bosons}

Recall some of the notation for free bosons valued in a commutative Frobenius algebra from \cite{Maulik_Okounkov_2019} which immediately generalizes to the super setting. Let $\mathbb{H}$ denote a supercommutative Frobenius algebra over a supercommutative ring $\mathbb{K}$. Generally we assume $\mathbb{H} = H^*_G(S/B)_{loc}$ and $\mathbb{K} = H^*_G(B)_{loc}$ where $S$ is a surface over a smooth base $B$. Let $\tau : \mbb H \to \mbb K$ denote the trace, which in the geometric setting is given by $\tau(\gamma) = -\int_{[S]} \gamma$. Likewise the multiplication $m(-,-)$ and supersymmetric form  $\langle -, -\rangle$ are identified with the cup product and the Poincar\'e pairing respectively. 

Consider the Heisenberg-Clifford algebra $\mc H eis_{\mbb H} = \Sym^\bullet(\mbb H[t^{\pm 1}] \oplus \C \pmb c)$ generated by Fourier coefficients of the fields 
\[\widetilde \alpha(\gamma, z) = \sum\widetilde  \alpha_n(\gamma) z^{-n-1}\] 
 for $ \gamma\in \mbb H$. Let $\mc F_{\mbb H, k}$ denote its level $k$ Fock representation, omitting $k$ if $k = 1$. There is a vertex operator algebra structure $(V, Y, |\rangle, \omega)$ on $V = \mc F_{\mbb{H}, k}$ so that for $\gamma \in \mbb{H} \subset \mc F_{\mbb{H}, k}$ of degree $1$ we have $Y(\gamma, z) = \alpha(\gamma, z)$.
The modified vertex operators associated to  $v \in \mbb{H}$ are denoted by
\[ \widetilde{\pmb{\alpha}}(\gamma, z) = \sum \widetilde \alpha_n(\gamma) z^{-n}. \] 

Recall from \cite[\S 13]{Maulik_Okounkov_2019} that given $\Gamma \in \mbb{H}^{\otimes n}$ let $:\pmb{\alpha}^n:(\Gamma, z)$ denote the corresponding normally ordered product and in the particular case that $\Gamma$ is the iterated coproduct $\gamma \mapsto \gamma^{\Delta n} \in \mbb{H}^{\otimes n}$ the field $ :\widetilde{\pmb{\alpha}}^n:(\gamma^{\Delta n}, z)$ is simply denoted $ :\widetilde{\pmb{\alpha}}^n:(\gamma, z).$ The element $\pmb{e} \in \mbb{H}$ so that $m(\gamma^\Delta) = \pmb{e} \gamma$ is called the handle-glueing element. 

\subsubsection{}

Given fields $A(z), B(z)$ we express the terms in the singular part of the OPE using higher products of fields $[A_{j}B]$ as 
\[ A(z)B(w) = \sum_{j \ge 0 }\frac{[A_{j} B](w)}{(z-w)^{j + 1}} \]
 so that the non-commutative Wick's formula \cite[\S 3.3]{Kac_1998} says 
\begin{multline}\label{eq:ncwick}
[A_{m}:BC:](z) = :[A_mB](z) C(z):+ \\
(-1)^{AB}:B(z) [A_mC](z): + \sum_{j = 0}^{m-1} \binom{m}{j} [[A_jB]_{m-1-j}C] (z). 
\end{multline}
Given an OPE calculated using \eqref{eq:ncwick} one immediately deduces the an OPE governing commutation relations between Fourier modes for the modified vertex operators by incorporating appropriate degree shifts. For example the commutation relations between modes $\phi$-coordinated vertex operators $X^\phi(u, z)$ and ordinary fields is calculated by 
\begin{align}\label{eq:xy_ope}
  X^\phi(u, z) Y(v ,w) & \sim\sum_{\substack {s \in  S \\  j \ge 0}} \frac{z^s Y(T_s(u)_{(j)} v  , w)}{(z-w)^{j+1}}
\end{align}
where $S$ is the spectrum of $L(0)$ and $T_s(u)$ is the projection of $T(u)$ onto the $s$-eigenspace of $L(0)$. 

For two $\phi$-coordinated vertex operators we can simply deduce their OPE from \eqref{eq:x_bracket}. 

We also use the special case of Borcherds' identity termed in \cite[\S 4.8]{Kac_1998} quasiassociativity of the normal ordered product, which says 
\begin{equation}\label{eq:quasiassoc}
   ::AB:C:(z) = :A:BC::(z) + 
   \sum_{j \ge 0} [A_{-j-2}[B_jC]] + (-1)^{AB}[B_{-j-2}[A_{j}C]].
\end{equation}
\subsubsection{Normalization convention} 

In the case of the surface $X_{A_{-1}}$ we make a normalization convention 

\begin{align}
\alpha_n([E]) &= \widetilde{\alpha}_n([E])\\
\alpha_n([\pt]) &= \frac{1}{\hbar}\widetilde{\alpha}_n([\pt])\\
\alpha_n([\sigma_{\pm}]) &= \begin{cases}\frac{1}{\hbar}\widetilde{\alpha}_n(\sigma_{\pm}) & n > 0\\
  \widetilde{\alpha}_n(\sigma_{\pm}) & n \le  0\end{cases}
\end{align}
and define fields 
\[ \alpha(\gamma, z), \pmb{\alpha}(\gamma, z)\] 
by linearity which normalizes the coefficients of the bracket on the Heisenberg algebra to Lie in $\Q \subset H^*_T(\pt)$. 

For the other surfaces $X_{R}$ we simply let 
\[ \alpha_k(\gamma) = \widetilde{\alpha}_k(\gamma)\] 
for any $\gamma \in \mathbb{H}$. 

\subsection{Construction of vertex representation}

\subsubsection{}\label{ssec:lattice}

Consider the hyperbolic lattice $\RN{2}_{1,1}$ with intersection pairing $\begin{pmatrix}0&1 \\ 1 & 0 \end{pmatrix}$ in the basis $u,v$. Let $\epsilon: \C[\RN{2}_{1,1}] \otimes \C[\RN{2}_{1,1}] \to \Z$ be the cocycle define by $\epsilon(v,u) = -1$, all other generators $1$ and extended by bimultiplicativity. Given a lattice $L$ and cocycle $\epsilon$ let 
\[ V_\epsilon(L) := \mc F_{L\otimes \C}\otimes \C[L] \]  denote the lattice vertex algebra associated to $L$ and omit the notation $\epsilon$ when unambiguous. 
When $L = \RN{2}_{1,1}^{\oplus n}$ let $L^+ = \langle u_i \rangle_{i=1}^n \subset L$ and define
\[ V^+(\RN{2}_{1,1}^{\oplus n}) := \mc F_{\RN{2}_{1,1}^{\oplus n}\otimes \C} \otimes \C[L^+ ] \] 
which is a sub-VOA of $V(L)$. Consider 
\begin{align}
\label{eq:main_voa} 
V_{T^*E} &= \mc F_{H^*(E)}\otimes \C[L^+]\\
&= V^+(\RN{2}_{1,1}) \otimes \mc F_{H^{\underline 1}(E)}.\nonumber
\end{align}

We will denote $u_1 = E$ using the identification $\RN{2}_{1,1} \simeq \NS(E\times \PP^1)$. 

The vertex operators take the form 
\begin{equation}
  \label{eq:VTE_voa_lattice} 
  Y(\prod_i \alpha_{-n_i}(\gamma_i) e^{mE}, z  ) =  :
  \prod_i \partial_z^{(n_i - 1)} \alpha(\gamma_i, z) Y(e^{mE}, z) :
\end{equation}

\subsubsection{}

Let $\omega(z) = \omega_{H^{\underline 0}(E)}(z) + \omega_{H^{\underline 1}(E)}(z)$ denote the standard conformal field for the Heisenberg part of $V_{T^*E}$ with conformal vector $\nu = \alpha_{-1}(\gamma_i)\alpha_{-1}(\gamma^i)|0\rangle$. For any $m \in \Z$ let $\nu_m = \nu - m \alpha_{-2}([E])|0\rangle$. For any $m$, this gives a conformal vector generating a Virasoro field $\omega_m(z)$ of $c = 0$. Likewise let $\pmb{\omega}_m(z) = X(\nu_m, z) = \sum{L_n}z^{-n}$. 
\[ \Gamma_{mE}(z) = :\exp(\pmb{\phi}(mE, z)): = e^{mE}\sum_{n > 0} : \pmb{\phi}^n:(mE, z)\]
where $\pmb{\phi}(\gamma, z) = \alpha_{\log}(\gamma) + \alpha_0(\gamma)\log z +\sum_{n \neq 0} \frac{\alpha_n(\gamma)}{-n} z^{-n}$ whenever 
$\gamma \in H^{\underline 0}(E, \Z) \oplus H^{\underline 1}(E, \Q)$. 

For $\gamma \in H^{\underline{0}}(E, \Z)$, the constant term $\alpha_{\log}(\gamma)$ is interpreted formally as is standard in the theory of lattice vertex algebras so that $\exp(\alpha_{\log}(\gamma)) = e^\gamma$ in the twisted group algebra of the cohomology lattice used in \ref{ssec:lattice}. For $\gamma \in H^{\underline 1}(E)$ the constant term is defined in \ref{ssec:SF}. 

Then this satisfies 
\[ \pmb{\alpha}(\gamma, z)\pmb{\phi}(\gamma', z) \sim \langle \gamma, \gamma'\rangle \frac{z}{z-w}\]
and furthermore we have $\Gamma_{mE}(z) = Y(e^{mE}, z)$. 

\subsubsection{Symplectic fermions}
\label{ssec:SF}
The vertex algebra $\mc F_{H^{\underline 1}(E)}$ is known as the symplectic fermion model \cite{Kausch_1995,Kausch_2000}.

It is closely related to the $bc$-system. Consider an associative superalgebra generated by odd elements
\begin{equation}\label{eq:asso_bc} 
\mc A_{bc} = \langle b_n, c_n, \theta^b, \theta^c \rangle_{n \in \Z}
\end{equation}
where the only non-zero anticommutators are 
\begin{align*}
[b_n, c_m] &= -n \delta_{n + m, 0} & [\theta^c, b_0] &=1 & [\theta^b, c_0] &= 1. 
\end{align*}

Let $V_{bc}$ denote the vector space freely generated from $\vac_{bc}$ by operators $b_n, c_n$ for $n<0$ and operators $\theta^b, \theta^c$  and annihilated by the other generators. The generating fields 
\begin{align}\label{eq:bc}
  b(z) &= \sum_{n \in \Z} b_n z^{-n-1}  & c(z) & =\sum_{n\neq  0} \frac{c_n}{n} z^{-n} + c_0 \log z + \theta^c
\end{align}
act on $V_{bc}$ and satisfy the OPE
\begin{equation}\label{eq:bc_ope}
  b(z)c(w) \sim \frac{1}{z-w}. 
\end{equation}

The subspace of $V_{bc}$ generated by the modes of $b(z)$ and $c(z)$ is isomorphic to $\mc F_{H^{\underline 1}(E)}$. Furthermore there is an isomorphism 
\[ \Heis_{H^{\underline 1}(E)} \simeq \on{Modes}(\{ b, \partial c\} )\]
where $\on{Modes}(\{\phi_1, \ldots, \phi_k\}) \subset \End(V)$ denotes the associative algebra generated by the Fourier coefficients of a set of $\End(V)$-valued fields by setting 
\begin{align}\label{eq:bc_and_alpha}
  b(z) &= \alpha(\sigma_+, z) &  \partial c(z) &= \alpha(\sigma_-, z)
\end{align} 
so that defining $\alpha_{\log}(\gamma)$ to be the appropriate linear combination of $\theta^b$ and $\theta^c$. Fixing this identification we consider $\phi(\gamma,z)$ to be $\End(V_{bc})$-valued fields. 

We let 
\begin{equation}\label{eq:current_bc}
  J_{bc}(z) = : c(z) b(z) : = \sum_{n \in \Z} J_n z^{-n-1}
\end{equation}
whose modes $J_n$ generate an (even) Heisenberg algebra with $c = 1$ which is the usual boson-fermion correspondence under the identification of the $bc$-ghost system with the free fermion system. 

The field 
\begin{equation}
\omega_{bc}(z) = \omega_{H^{\underline 1}(E)}(z) = :\partial c(z) b(z):
\end{equation}
 differs from the usual conformal field under the identification with free fermions in particular satisfying $c = -2$. Wick's theorem shows 
 \begin{equation} \omega_{bc}(z)J_{bc}(w) \sim \frac{-1}{(z-w)^3} + \frac{J_{bc}(w)}{(z-w)^2} + \frac{\partial J_{bc}(w)}{z-w}
 \end{equation} 
 corresponding to 
 \begin{equation}
\label{eq:j_stress}
[L_n, J_m] = -n J_{m+n} -\delta_{n+m, 0} (n^2 + n) \pmb{c}_{LJ}
 \end{equation}
 where $\pmb{c}_{LJ} = \frac{1}{2}$ acts by $1/2$ times the identity. 

Notice that the field 
\begin{align}
   \overline{\omega}_{bc}(z) &= \omega_{bc}(z) - \partial J_{bc}(z) \nonumber\\
   &= :\partial c(z) b(z): - \partial : c(z)b(z): \label{eq:new_omega}\\
   &=   -: c(z) \partial b(z):\nonumber
\end{align}
is also a stress tensor at $c = -2$ which also arises by exchanging the roles of $b$ and $c$. 

In the higher rank case we consider a rank $g$ bc-system corresponding to a symplectic vector space $T^*A = A\oplus A^\vee$ where $A$ has dimension $g$. Picking a basis $a_1, \ldots, a_g$ for $A$, the rank $g$ $bc$-system $V_{bc}^A$ is defined to be the tensor product 
\[ V_{bc}^A = V_{bc}^{(1)} \otimes\cdots \otimes V_{bc}^{(g)}\] 
with fields $b^{(i)}(z)$ and $c^{(j)}(z)$ satisfying 
\[ b^{(i)}(z) c^{(j)}(w) \sim \frac{\delta_{ij}}{z-w}. \] 
The zero and logarithmic modes are denoted $b_0^{(i)}, c_0{(j)}$ and $\theta^b_{(i)}, \theta^c_{(j)}$. 

\subsubsection{Zero modes of bc-system geometrically}

Let $\Jac(E)_\otimes \simeq E$ be a copy of $\Jac(E)$. There is an action map 
\begin{align*}
   \Jac(E)_\otimes \times M(v; \mc F\otimes \mc P)/\Jac(E) &\xrightarrow{\otimes} M(v; \mc F\otimes \mc P)/\Jac(E)'\\
   (\mc L, (\mc E, \phi) ) &\mapsto (\pi_E^*\mc L \otimes \mc E,(\mc L \otimes -)\circ  \phi \circ (\pi_E^* \mc L^{-1}\otimes - )). 
\end{align*}
Let $\tau_+, \tau_- \in H^1(\Jac(E)_\otimes)$ denote a basis of A and B cycles. Convolution with the graph of the action map agrees with $\otimes_*$ and we let 
\begin{equation}
\gamma \star_\otimes  - : H^*_T(M(v; \mc F\otimes \mc P)/\Jac(E)) \to H^*_T(M(v; \mc F\otimes \mc P)/\Jac(E))
\end{equation}
denote the map $x \mapsto \otimes_*(\gamma \boxtimes x)$. 

Let $\xi_+, \xi_- \in H^1(\Jac(E))$ denote the classes of A and B cycles in the cohomology of the base of $M(v; \mc F)/\Jac(E)$. 

Let $\Heis^0_{H^{\underline{1}}(E)}\subset \Heis_{H^{\underline{1}}(E)}$ denote the subalgebra spanned by the zero modes $c, \alpha_0(\sigma_+), \alpha_0(\sigma_-)$. This is contained in $\Heis^{0, \log}_{H^{\underline{1}}(E)}$ which also contains $\alpha_{\log}(\sigma_+)$ and $\alpha_{\log}(\sigma_-)$. 
We assign 
\begin{align}\label{eq:zero_fermi}
\alpha_0(\sigma_\pm) &\mapsto \xi_{\pm} \cup - \\
\label{eq:log_fermi}
\alpha_{\log}(\sigma_\pm) &\mapsto \tau_\pm \star_\otimes - . 
\end{align}

Now define 

\begin{align}\label{eq:VJAC}
   V_{T^*E}^{\Jac} &:= \bigoplus_{n,m} H^*_T(M(v + n[\pt] + m[E]; \mc F\otimes \mc P)/\Jac(E))\\
   &= V_{T^*E}\otimes H^*(\Jac(E))\nonumber \\
   &= V^+(\RN{2}_{1,1})\otimes V_{bc}.\label{eq:vte_jac_is_tensor}
\end{align}

\begin{prop}\label{prop:bc_zero_modes_geo}
The assignments \eqref{eq:zero_fermi}-\eqref{eq:log_fermi} together with the Nakajima action give a representation of the modes of $\pmb\phi(\sigma_\pm, z)$ 
on $V_{T^*E}^{\Jac}$. 
\end{prop}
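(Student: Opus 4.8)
The plan is to reduce the statement to checking that the geometric operators assigned to the modes of $\pmb\phi(\sigma_\pm, z)$ obey the defining (anti)commutation relations of the algebra generated by those modes, namely $\Heis^{0,\log}_{H^{\underline 1}(E)}$ together with the nonzero oscillator modes. First I would record these relations explicitly. Under the identification \eqref{eq:bc_and_alpha} one has $\alpha_0(\sigma_+) = b_0$ and $\alpha_0(\sigma_-) = c_0$, and the logarithmic modes $\alpha_{\log}(\sigma_\pm)$ are the corresponding linear combinations of $\theta^b,\theta^c$ fixed there; by \eqref{eq:asso_bc} the only nonvanishing brackets among the zero and logarithmic modes are the canonical pairs $[\alpha_{\log}(\sigma_-),\alpha_0(\sigma_+)] = \langle\sigma_-,\sigma_+\rangle$ and $[\alpha_{\log}(\sigma_+),\alpha_0(\sigma_-)] = \langle\sigma_+,\sigma_-\rangle$, while all zero and logarithmic modes (anti)commute with the oscillators $\alpha_n(\sigma_\pm)$ for $n\neq 0$. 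Since the latter act by the Nakajima operators, whose relations are already established \cite{Nakajima_1999,Grojnowski_1996}, it suffices to verify the relations involving at least one zero or logarithmic mode against the assignments \eqref{eq:zero_fermi}--\eqref{eq:log_fermi}.

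Next I would dispatch the self-relations among the zero modes. Because $\xi_+,\xi_-\in H^1(\Jac(E))$ are odd classes and the cup product on $H^*_T(M(v;\mc F\otimes\mc P)/\Jac(E))$ is supercommutative, the operators $\xi_\pm\cup -$ square to zero and anticommute with one another, matching $[b_0,b_0] = [c_0,c_0] = [b_0,c_0] = 0$. The logarithmic self-relations $[\theta^b,\theta^b]=[\theta^c,\theta^c]=[\theta^b,\theta^c]=0$ are similarly matched by the fact that distinct translation-action convolutions $\tau_\pm\star_\otimes -$ commute and square to zero on account of the odd degree of $\tau_\pm$.

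The geometric core is the canonical commutation relation between a logarithmic mode and a zero mode, that is, $[\tau_\mp\star_\otimes -,\ \xi_\pm\cup -] = \langle\sigma_\mp,\sigma_\pm\rangle\cdot\id$. Here $\xi_\pm\cup -$ is multiplication by a degree-one class pulled back from the base $\Jac(E)$ of $M(v;\mc F)/\Jac(E)$, while $\tau_\pm\star_\otimes -$ is convolution with the graph of the tensoring action of $\Jac(E)_\otimes$, which tensors a framed sheaf by $\pi_E^*\mc L$. The commutator of convolution by $\tau_\mp$ with cup product by $\xi_\pm$ is computed by the projection and base-change formulas for the action map $\otimes$, and the single nonvanishing contribution is exactly the Poincar\'e pairing of $\tau_\mp$ against $\xi_\pm$ under $\Jac(E)_\otimes\simeq\Jac(E)$; this is governed by $c_1(\mc P) = \sum_i\theta_i\chi_i$ and is the operator incarnation of the Fourier--Mukai identity \eqref{eq:FM_is_oddF}. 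Evaluating the pairing gives $\langle\sigma_\mp,\sigma_\pm\rangle$, reproducing $[\theta^c,b_0]=1$ and $[\theta^b,c_0]=1$.

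Finally I would check that all zero and logarithmic modes (anti)commute with the oscillators $\alpha_n(\sigma_\pm)$, $n\neq 0$. Because the moduli spaces are taken relatively over $\Jac(E)$ and the Nakajima correspondences are fibered over this base, cup product by the base class $\xi_\pm$ commutes with each $\alpha_n$ by the projection formula, and convolution $\tau_\pm\star_\otimes$ commutes because tensoring by a line bundle pulled back from $E$ is compatible with the add/remove-a-point correspondences defining the $\alpha_n$. Assembling these verifications gives a representation of the full algebra of modes of $\pmb\phi(\sigma_\pm, z)$ on $V_{T^*E}^{\Jac}$. The main obstacle is the mixed commutator of the third step: one must make precise, in the relative and $T$-equivariant setting, the cancellation that collapses the composite of the tensoring convolution and the cup product into a scalar equal to the intersection pairing, which is where careful bookkeeping of the action map $\otimes$ and of the universal bundle $\mc P$ is needed.
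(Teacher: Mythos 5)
Your proposal is correct and follows essentially the same route as the paper's proof: reduce to checking relations after noting compatibility with the Nakajima operators, then verify the zero/logarithmic mode relations using the Fourier duality between cup product and convolution (your projection-formula computation via the action map is exactly the content the paper compresses into the identity $- \star - = \Phi(\Phi^{-1}(-) \cup \Phi^{-1}(-))$ and \eqref{eq:FM_is_oddF}). The only difference is one of detail: you unwind the "straightforwardly verified" step explicitly, which the paper leaves to the reader.
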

\begin{proof}
These operators commute with the Nakajima operators and so we only need to show their relations with each other which is straightforwardly verified using that \[ - \star - = \Phi(\Phi^{-1}(-) \cup \Phi^{-1}(-)).\] 
\end{proof} 

\subsubsection{Higher genus} 

For more general ruled surfaces $S_C = \PP(\mc T_C\oplus \mc O_C)$ pullback from the base curve $C$ gives an identification 
\[ \Pic_0(S_C)\simeq \Jac(C). \] 
Given $\mc L \in \Jac(C)$ 
there is a relative version of the Hilbert scheme of points in $T^*C$ corresponding to rank 1 torsion-free sheaves with a framing at $D_\infty = S_C\backslash D_\infty$. The determinant map $M(v) \to \Pic(S_C)$ restricts to give a relative moduli space
\[ M(1,0,-n; \mc L)/\Jac(C) .\]

Let $\sigma_{+, i}, \sigma_{-,i}$ for $i = 1, \ldots, g$ denote a basis of 
$H^1(C)$ of $A$ and $B$ cycles so that 
$\langle \sigma_{+,i}, \sigma_{-},j\rangle = \delta_{i,j}$. Let $H^*(\Jac(C)) = \C[\xi_{\pm, i}]$ be the corresponding cohomology ring of the Jacobian and $\tau_{\pm, i}$ generators for $\Jac(C)_\otimes$ which acts by tensor product. 

Consider the assignment 
\begin{align*}\alpha_0(\sigma_{\pm,i}) &\mapsto \xi_{\pm,i} \cup - \\
  \alpha_{\log}(\sigma_{\pm, i}) &\mapsto \tau_{\pm, i}\star_\times -.\end{align*}

  The above argument for the elliptic curve case in genus 1 immediately generalized to higher genus. 
\begin{prop}
The above assignment together with the Nakajima action gives an identification 
\[ \mc F_{H^{\underline 0}(C)} \otimes V_{bc}^{H^{1,0}(C)}\simeq \bigoplus_n H_T^*(M(1,0,-n; \mc L)/\Jac(C) ) \] 
of the cohomology of moduli spaces of rank 1 sheaves with the tensor product of a rank 2 free boson and a rank $g$ $bc$-system. 
\end{prop}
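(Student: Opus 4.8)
The plan is to reduce the statement to the genus $1$ case of Proposition \ref{prop:bc_zero_modes_geo} by showing that the construction factors, over the $g$ dual pairs $(\sigma_{+,i},\sigma_{-,i})$, into independent copies of the rank $1$ $bc$-system, while the bosonic tensor factor $\mc F_{H^{\underline 0}(C)}$ is produced by the standard Nakajima correspondences. First I would invoke the Nakajima--Grojnowski construction for the fiberwise-affine open $T^*C \subset S_C$: the operators $\alpha_n(\gamma)$ for $n\neq 0$ and $\gamma\in H^*_T(C)$ act by the usual correspondences on $\bigoplus_n H^*_T(M(1,0,-n;\mc L))$ and identify it with the Fock space $\mc F_{H^*_T(C)}$. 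Splitting $H^*_T(C)=H^{\underline 0}(C)\oplus H^{1}(C)$ separates the rank $2$ free boson $\mc F_{H^{\underline 0}(C)}$ from the rank $2g$ symplectic-fermion factor, which under \eqref{eq:bc_and_alpha} carries exactly the nonzero modes $b^{(i)}_n,c^{(i)}_n$ of the $g$ $bc$-systems.

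Next I would pass to the relative moduli space $M(1,0,-n;\mc L)/\Jac(C)$, whose cohomology acquires the extra tensor factor $H^*(\Jac(C))=\Lambda^\ast H^1(C)$ carrying precisely the zero and logarithmic modes absent from the symplectic fermion. This is where the geometric content of the assignment enters: the zero modes $\alpha_0(\sigma_{\pm,i})$ act as cup product by $\xi_{\pm,i}$ via \eqref{eq:zero_fermi}, and the logarithmic modes $\alpha_{\log}(\sigma_{\pm,i})$ act as convolution $\tau_{\pm,i}\star_\times-$ under the tensoring action of $\Jac(C)_\otimes$ via \eqref{eq:log_fermi}. Since these operators are pulled back from, respectively act only on, the base $\Jac(C)$, they commute with all Nakajima operators, so it remains only to verify the relations \eqref{eq:asso_bc} among the zero and logarithmic modes themselves.

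The heart of the argument is therefore the verification of the Clifford-type cross relations $[\theta^c,b_0]=1$ and $[\theta^b,c_0]=1$, together with the vanishing of squares $b_0^2=c_0^2=0$ and the mutual (anti)commutation of the operators attached to distinct directions $i\neq j$. Each such relation reduces to an identity on $H^*(\Jac(C))$ between cup product by $\xi_{\pm,i}$ and convolution by $\tau_{\pm,i}$, which is exactly the higher-genus Fourier--Mukai statement \eqref{eq:FM_is_oddF}: conjugating convolution by the cohomological transform $\Phi^H$ of \eqref{eq:FM} turns it into a partial derivative, hence the operator adjoint to cup product, so that $\{\text{cup by }\xi_{\pm,i},\,\text{convolution by }\tau_{\pm,i}\}$ produces the pairing $\langle\sigma_{+,i},\sigma_{-,j}\rangle=\delta_{ij}$. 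The independence of the $g$ directions and the vanishing of squares follow from the K\"unneth decomposition $H^*(\Jac(C))=\Lambda^\ast H^1(C)$ in the basis dual to $\{\sigma_{\pm,i}\}$, exactly as the genus $1$ identity $-\star-=\Phi(\Phi^{-1}(-)\cup\Phi^{-1}(-))$ was used in Proposition \ref{prop:bc_zero_modes_geo}. A final weight-space count then upgrades the compatible actions to the asserted isomorphism $\mc F_{H^{\underline 0}(C)}\otimes V_{bc}^{H^{1,0}(C)}\simeq\bigoplus_n H^*_T(M(1,0,-n;\mc L)/\Jac(C))$.

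I expect the main obstacle to be the compatibility of the convolution operators with the $\Jac(C)$-quotient: one must confirm that $\tau_{\pm,i}\star_\times-$ is well defined on $H^*_T(M(1,0,-n;\mc L)/\Jac(C))$ and that the resulting operators realize the logarithmic modes with the correct normalization and signs, so that \eqref{eq:FM_is_oddF} yields precisely $\delta_{ij}$ rather than a scalar multiple. Once this normalization is pinned down (the $(-1)^{g/2}$ factors and the orientation of the $A$- and $B$-cycles being the delicate point), the remaining relations are formal consequences of \eqref{eq:FM_is_oddF} and the Nakajima commutators, and the genus $1$ argument generalizes verbatim.
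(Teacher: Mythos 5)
Your proposal is correct and takes essentially the same route as the paper: the paper's argument (Proposition \ref{prop:bc_zero_modes_geo}, which it declares ``immediately generalizes'' to higher genus) likewise has the Nakajima correspondences carry the nonzero modes, realizes the zero and logarithmic modes as cup product by $\xi_{\pm,i}$ and convolution by $\tau_{\pm,i}$ on the $\Jac(C)$ factor, observes that these commute with the Nakajima operators, and verifies the remaining relations among the zero and logarithmic modes via the Fourier--Mukai identity $-\star - = \Phi(\Phi^{-1}(-)\cup \Phi^{-1}(-))$. Your write-up merely makes explicit what the paper leaves implicit --- the factorization over the $g$ dual pairs $(\sigma_{+,i},\sigma_{-,i})$ and the concluding weight-space count.
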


\subsubsection{Variant Lie superalgebra}

There is a variant of $\mf g_{T^*E}$ denoted $\mf g_{T^*E}^{\circ}$ whose generating fields satisfy OPEs of the form \eqref{eq:VOA_OPE}. This is the analogue of the algebra $\widehat{\mf t}(\mf g, \mu)^\circ$ from \cite{Chen_Li_Tan_2021}. 

Writing $\deg(\gamma)$ for the cohomological degree so that $\deg(E) = 0, \deg(\sigma_\pm) = 1, \deg(\pt) = 2$ for $\gamma \in B := \{ E,\sigma_-,\sigma_+, \pt \}$, let  
\begin{equation} 
\mf g_{T^*E}^\circ = \langle\overline{w}_\gamma^{a,b}, \pmb c\rangle_{\gamma \in B}^{a,b \in \Z}
\end{equation}
with bracket 
\begin{equation}
  \label{eq:wcirc_bracket} 
  [\overline{w}_\gamma^{a,b}, \overline{w}_\eta^{c,d}] = \kappa(a,b,c,d;\gamma, \eta) w_{\gamma\star \eta}^{a + c, b + d} + b\langle \gamma, \eta\rangle \delta_{a + c, 0} \delta_{b + d, 0} \pmb c
\end{equation} 
where 
\[ \kappa(a,b,c,d; \gamma, \eta) = -\det\begin{pmatrix}a & b \\ c & d \end{pmatrix} - a\deg(\gamma \star \eta) + (a + c)(\deg(\gamma)  -1)\] 
which we encode in the generating series 

\[ \overline{\Upsilon}_m(\gamma, z) = \sum_{n \in \Z}  \overline{w}_\gamma^{m,n}z^{-n-\deg{\gamma}} \] 
for homogeneous $\gamma$. 

Then \eqref{eq:wcirc_bracket} is equivalent to the OPE 
\begin{multline}\label{eq:upsilon_circ_ope} \overline{\Upsilon}_m(\gamma, z) \overline{\Upsilon}_{m'}(\gamma, w)  \sim \\ m\frac{\partial_w \overline \Upsilon_{m + m'}(\gamma \star \gamma', w)}{z-w} 
  +(m+m')\frac{\overline \Upsilon_{m+m'}(\gamma \star \gamma',w)}{(z-w)^2}
  +\frac{\delta_{m,-m'} \langle \gamma, \gamma' \rangle \pmb{c}}{(z-w)^2}.  
\end{multline}

\subsubsection{Lattice realization of full Jacobian algebra} 

We will realize a lattice realization of the Lie algebra from Section \ref{ssec:full_jacobian_lie} using the construction of Billig \cite{Billig_2006} of the full toroidal Lie algebra. 

Let $\mf g$ be a semisimple simply laced Lie algebra and $V_{\mf g, 1}$ the level $1$ representation which admits a description as a lattice VOA. We again allow $\mf g = \mf {gl}_0$ in which case $V_{\mf g, 1}$ only contains the identity field.  Denote its conformal vector $\omega_{\mf g}$ with central charge $k_{\mf g}$ which acts in $V_{\mf g, 1}$ via 
\[\frac{\ell\dim(\mf g)}{\ell + h^\vee} = \frac{\dim \mf g}{1 + h^\vee}. \] 
Consider the VOA $V_{HVir}$ freely generated by a Virasoro field $\omega_{aux}(z)$ with central element $k_V$ and a Heisenberg field $I(z) = Y(I, z)$ with central element $k_H$ with twisted central element $k_{HV}$ corresponding to the OPE 
\[ \omega_{aux}(z) I(w) \sim \frac{-2k_{HV}}{(z-w)^3} + \frac{I(w)}{(z-w)^2} + \frac{\partial I(w)}{z-w}.\] 

Recall the VOA $V^+(\RN{2}_{1,1})$ with conformal vector $\omega_{\RN{2}_{1,1}}$. We make the identification \eqref{eq:main_voa} induced by $\NS(E\times \PP^1) = \RN{2}_{1,1}$.  In the special case we are considering, Billig's Theorem produces a vertex representation of the toroidal algebra as follows.  Letting 
\[ \omega = \omega_{\mf g} + \omega_{aux} + \omega_{\RN{2}_{1,1}}\] consider the assignments 
\begin{align}
\mrm{k}_a(z) &= \begin{cases}
  \frac 1 a Y(e^{aE}, z) & \text{   if  } a \neq 0 \\
  \pmb\phi ([E], z) - E - \alpha_0(E)\log z & \text{  if } a = 0 \end{cases}\label{eq:billig_K} \\
  J_{x,a}(z) &= :Y(x, z) Y(e^{mE}, z): \\
  \mrm{d}_s(z) &= :\alpha([\pt], z)Y(e^{mE}, z):+ m:I(z)Y(e^{mE}, z):\\
  \mrm{d}_t(z) &= :Y(\omega,z)Y(e^{mE}, z):+ :I(z)\alpha(m[E],z)Y(e^{mE}, z): \notag\\
    &- :\partial_z Y(m[E], z)Y(e^{mE}, z):\label{eq:billig_Dt}
\end{align}

\begin{thm}[{\cite{Billig_2006}}]\label{thm:billig_full}
Let $V$ be a module for $V_{\mf g,1} \otimes V^+(\RN{2}_{1,1})\otimes V_{HVir}$ where the central charges act via 
\begin{align*}
k_V &= -2 & k_{H} &= 1 & k_{HV} &= \frac{1}{2}
\end{align*}
then \eqref{eq:billig_K}-\eqref{eq:billig_Dt} give a representation of $\mc T(\mf g)$  on $V$. 
\end{thm}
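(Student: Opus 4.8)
The plan is to verify directly that the modes of the fields assigned in \eqref{eq:billig_K}--\eqref{eq:billig_Dt} close into the full toroidal algebra $\mc T(\mf g)$ by matching operator product expansions against the defining brackets. For a pair of mutually local fields the commutator of Fourier modes is determined by the singular part of the OPE, so it suffices to compute the pairwise OPEs of the four families of generating fields $J_{x,a}(z)$, $\mrm d_{s,a}(z)$, $\mrm d_{t,a}(z)$ and $\mrm k_a(z)$, and to check that the induced mode brackets reproduce the relations among $x_{a,b}$, $\mrm d_{a,b}$ and $\mrm k_{a,b}$ in \eqref{eq:kab}--\eqref{eq:xab}. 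Throughout, the formal variable $z$ records the second torus coordinate $t$, while the first coordinate $s$ is recorded by the lattice degree carried by the factor $Y(e^{aE},z)$ attached to the isotropic vector $E = u_1$ of $\RN{2}_{1,1}$ under the identification \eqref{eq:main_voa}.

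First I would assemble the OPEs of the constituent structures: from $V_{\mf g,1}$ the level-one current OPE $Y(x,z)Y(y,w)\sim \langle x,y\rangle/(z-w)^2 + Y([x,y],w)/(z-w)$; from the lattice factor $V^+(\RN{2}_{1,1})$ the mutual locality without poles of $Y(e^{aE},z)$ and $Y(e^{bE},w)$, since $\langle aE, bE\rangle = 0$, together with the contraction of $\alpha([\pt],z)$ against $Y(e^{bE},w)$ giving a first-order pole of weight $b\langle[\pt],E\rangle = b$, whereas $\alpha([E],z)$ does not contract because $\langle E, E\rangle = 0$; and from $V_{HVir}$ the Virasoro and Heisenberg OPEs, crucially the anomalous cubic pole $-2k_{HV}/(z-w)^3$ in $\omega_{aux}(z)I(w)$ which records that $I$ is not primary. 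All cross-OPEs of the composite fields are then produced mechanically from these by the non-commutative Wick formula \eqref{eq:ncwick}, reorganising iterated normal-ordered products with quasi-associativity \eqref{eq:quasiassoc} where needed.

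The $J_{x,a}$--$J_{y,b}$ OPE is the most transparent: the current factor contributes $Y([x,y],-)$ and a quadratic pole $\langle x,y\rangle$, while the lattice factors add their degrees; extracting modes turns the quadratic pole into exactly the central term $-\langle x,y\rangle(ad-bc)\,\mrm k_{a+c,b+d}$ together with the $\delta$-supported $a\,\mrm k_s + b\,\mrm k_t$, with $\mrm k_a(z)$ realised by \eqref{eq:billig_K}. The delicate computations are the OPEs involving $\mrm d_{t,a}$, which is assembled in \eqref{eq:billig_Dt} from the total conformal vector $\omega = \omega_{\mf g} + \omega_{aux} + \omega_{\RN{2}_{1,1}}$, the $I$-field corrections, and the subtraction $-:\partial_z Y(m[E],z)Y(e^{mE},z):$. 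The higher-order poles generated by the Virasoro self-contractions carry the total central charge of $\omega$, the $I$--$I$ contractions carry $k_H$, and the mixed $\omega_{aux}$--$I$ contractions carry $k_{HV}$; demanding that the derivation--derivation bracket close \emph{without} a central anomaly, as in $[\mrm d_{a,b},\mrm d_{c,d}] = -(ad-bc)\mrm d_{a+c,b+d}$, while the $\mrm d$--$\mrm k$ and $J$--$J$ brackets retain precisely their stated central terms, is what pins down $k_V = -2$, $k_H = 1$ and $k_{HV} = \tfrac12$. The correction term in \eqref{eq:billig_Dt} is tuned exactly so that the spurious second- and third-order poles cancel and $\mrm d_{t,a}$ acts on $J_{x,b}$ as the clean derivation $-(ad-bc)$ with no lower-order remainder.

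I expect the anomaly bookkeeping for the derivation fields to be the main obstacle: because $I$ is non-primary and the lattice operators $Y(e^{mE},z)$ couple to both $\omega$ and $I$, the Wick expansions of $\mrm d_{t,a}(z)\mrm d_{t,b}(w)$ and $\mrm d_{s,a}(z)\mrm d_{t,b}(w)$ generate many higher-pole terms that must be shown to cancel or reassemble into a single simple pole carrying $\mrm d_{a+c,b+d}$. Rather than verifying the Jacobi identity of $\mc T(\mf g)$ through triple brackets by hand, I would handle consistency abstractly: once every pairwise OPE has been matched, the (weak) associativity of OPEs in the ambient vertex algebra guarantees that the span of the modes is closed under bracket and that \eqref{eq:billig_K}--\eqref{eq:billig_Dt} defines a genuine Lie algebra homomorphism $\mc T(\mf g)\to\End(V)$.
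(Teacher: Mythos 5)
The paper does not actually prove this statement: Theorem \ref{thm:billig_full} is imported wholesale from Billig's work (hence the citation in the theorem header), and is used downstream as a black box in Theorem \ref{thm:goofy_other_surfaces} and in the higher-rank construction. So there is no in-paper proof to compare against; your proposal is in effect a reconstruction of Billig's original argument, and in outline it follows the right strategy: track the $s$-degree by the lattice charge of $Y(e^{aE},z)$ and the $t$-degree by the Fourier mode in $z$, compute all pairwise OPEs via Wick's formula and lattice vertex operator calculus, match mode brackets against the toroidal relations, and observe that the Jacobi identity is automatic because the target is $\End(V)$ with the honest commutator. Those structural points, as well as the identification of the $\mrm d_t$--$\mrm d_t$ and $\mrm d_s$--$\mrm d_t$ computations as the crux, are accurate; note, though, that your text describes these computations rather than performing them, and they are where all the content of the theorem lives.

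There is one substantive conceptual error. You claim the central charges $(k_V,k_H,k_{HV})=(-2,1,\tfrac12)$ are pinned down "by demanding that the derivation--derivation bracket close \emph{without} a central anomaly, as in $[\mrm d_{a,b},\mrm d_{c,d}]=-(ad-bc)\mrm d_{a+c,b+d}$." That relation holds only for the divergence-free combinations $\mrm d_{a,b}$, i.e.\ in the extended affine subalgebra $\mf t(\mf g)$; the full algebra $\mc T(\mf g)$ contains all of $\mc T_A$, in particular the fields $t^n\mrm d_t$. At lattice charge $m=0$ the assignment \eqref{eq:billig_Dt} realizes these by the modes of $Y(\omega,z)$ with $\omega=\omega_{\mf g}+\omega_{aux}+\omega_{\RN{2}_{1,1}}$, a Virasoro algebra of central charge $\frac{\dim\mf g}{1+h^\vee}+k_V+2=\frac{\dim\mf g}{1+h^\vee}$ under the stated $k_V=-2$, which is nonzero whenever $\mf g\neq\mf{gl}_0$. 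So the anomalies in the vector-field sector do \emph{not} cancel: the full toroidal Lie algebra that Billig represents carries a nontrivial $\mc K$-valued $2$-cocycle on $\mc T_A$ (suppressed in this paper's list of relations, which only covers the $\mrm d_{a,b}$, $\mrm k_{a,b}$, $x_{a,b}$ sector), and the role of the central-charge conditions is to make the anomaly produced by the Wick calculus equal that defining cocycle, not to make it vanish. Taken literally, your anomaly-free requirement over-constrains the system and is violated for every nontrivial $\mf g$; the matching must be carried out against the cocycle-twisted bracket, which is precisely the delicate bookkeeping in Billig's proof.
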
 

There is a specific representation 
\begin{align}
  V_{HVir} &\to V_{bc}((z)) \label{eq:hvir_rep}\\
  \omega_{aux} &\mapsto \overline{\omega_{bc}}(z) \notag \\
  I(z) & \mapsto - J_{bc}(z) \notag 
\end{align}
satisfying the hypotheses of Theorem \ref{thm:billig_full}. We conclude 

\begin{prop}
The VOA $V_{T^*E}^{\Jac}$ is a module for the full toroidal algebra. 
\end{prop}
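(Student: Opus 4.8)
The plan is to apply Billig's Theorem \ref{thm:billig_full} directly, specialized to the trivial Lie algebra $\mf g = \mf{gl}_0$, using the tensor factorization \eqref{eq:vte_jac_is_tensor}, namely $V_{T^*E}^{\Jac} = V^+(\RN{2}_{1,1}) \otimes V_{bc}$, together with the representation \eqref{eq:hvir_rep} of $V_{HVir}$ inside $V_{bc}$. When $\mf g = \mf{gl}_0$ the factor $V_{\mf g,1}$ contributes only the identity field, so that $\omega_{\mf g} = 0$ and $k_{\mf g} = 0$, and the hypothesis of Theorem \ref{thm:billig_full} reduces to the requirement that $V_{T^*E}^{\Jac}$ be a module for $V^+(\RN{2}_{1,1}) \otimes V_{HVir}$ with the central charges $k_V = -2$, $k_H = 1$, $k_{HV} = \tfrac12$. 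In this case $\mc T(\mf{gl}_0) = \mc K \oplus \mc T_A$ is exactly the full toroidal algebra appearing in the statement.

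First I would observe that the $V^+(\RN{2}_{1,1})$ tensor factor in \eqref{eq:vte_jac_is_tensor} acts on itself tautologically, so all of the content lies in exhibiting $V_{bc}$ as a $V_{HVir}$-module with the required central charges via the field assignments $\omega_{aux} \mapsto \overline{\omega}_{bc}$ and $I \mapsto -J_{bc}$ of \eqref{eq:hvir_rep}. I would then verify the three central charges in turn. That $\overline{\omega}_{bc}(z)$ is a stress tensor of central charge $-2$ was recorded after \eqref{eq:new_omega}, giving $k_V = -2$; and that $J_{bc}(z)$, hence $-J_{bc}(z)$, generates a Heisenberg algebra of central charge $1$ was recorded after \eqref{eq:current_bc}, giving $k_H = 1$.

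The one genuinely computational point is the twisted central element $k_{HV}$, which is read off from the third-order pole of $\omega_{aux}(z) I(w)$. Here I would use $\overline{\omega}_{bc} = \omega_{bc} - \partial J_{bc}$ together with the OPE \eqref{eq:j_stress} for $\omega_{bc}$, whose cubic pole is $-1$, and the Heisenberg relation $J_{bc}(z)J_{bc}(w) \sim (z-w)^{-2}$, which gives $\partial_z J_{bc}(z) J_{bc}(w) \sim -2(z-w)^{-3}$. The correction term shifts the cubic pole from $-1$ to $-1 + 2 = +1$, so that $\overline{\omega}_{bc}(z)(-J_{bc}(w))$ has cubic pole $-1 = -2k_{HV}$, i.e. $k_{HV} = \tfrac12$, matching the normalization in the OPE defining $V_{HVir}$.

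With the three central charges verified, the assignments \eqref{eq:hvir_rep} exhibit $V_{bc}$, and hence $V^+(\RN{2}_{1,1}) \otimes V_{bc} = V_{T^*E}^{\Jac}$, as a module for $V^+(\RN{2}_{1,1}) \otimes V_{HVir}$ satisfying the hypotheses of Theorem \ref{thm:billig_full} with $\mf g = \mf{gl}_0$. That theorem then produces a representation of $\mc T(\mf{gl}_0)$ on $V_{T^*E}^{\Jac}$ through \eqref{eq:billig_K}--\eqref{eq:billig_Dt}, which is the assertion. The main (and essentially only) obstacle is the bookkeeping in the $k_{HV}$ computation: one must be careful that it is the modified stress tensor $\overline{\omega}_{bc}$, rather than $\omega_{bc}$, whose cubic pole enters, since the $\partial J_{bc}$ correction flips the sign of that pole and is precisely what pins $k_{HV}$ to the value $\tfrac12$ demanded by Billig's theorem.
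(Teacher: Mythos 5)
Your proposal is correct and follows exactly the paper's own route: the paper's proof likewise invokes the tensor decomposition \eqref{eq:vte_jac_is_tensor} and Theorem \ref{thm:billig_full} with $\mf g = \mf{gl}_0$ under the representation \eqref{eq:hvir_rep}, asserting (without writing out the computation) that the central charges $k_V = -2$, $k_H = 1$, $k_{HV} = \tfrac12$ are satisfied. Your explicit verification of $k_{HV}$ via the $\partial J_{bc}$ correction to the cubic pole is accurate and simply fills in the detail the paper leaves implicit.
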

\begin{proof}
Immediately follows from the description of $V_{T^*E}^{\Jac}$ in \eqref{eq:vte_jac_is_tensor} and Theorem \ref{thm:billig_full} under the representation \eqref{eq:hvir_rep}. 
\end{proof}

Consider two additional fields 
\begin{align}\label{eq:extra_sigmapm}
\sigma^+_a(z) &= : \alpha(\sigma_+, z)Y(e^{aE},z): & \sigma^-_a(z) &= : \alpha(\sigma_-, z)Y(e^{aE},z):
\end{align}

\begin{defn}\label{defn:Ubullet}
The algebra $U^\bullet(\mf g_{T^*E}^{\Jac{}})$ is the subalgebra of the algebra of modes of $V_{T^*E}^{\Jac}$ generated by the modes of the fields \eqref{eq:extra_sigmapm} and \eqref{eq:billig_K}-\eqref{eq:billig_Dt}. 
\end{defn} 

\subsubsection{Lattice realization of variant Lie algebra}

We now give fields in $V_{T^*E}$ whose OPEs satisfy \eqref{eq:upsilon_circ_ope} giving a representation of $\mf g_{T^*E}^\circ$ on $V_{T^*E}$. 

Define fields 
\begin{align}
  \label{eq:d_circ_freefld}
  \overline{\mrm{d}}_m(z)
  &=   m : \omega(z) \Gamma_{mE}(z) : -\partial_z : \alpha(\pt, z)\Gamma_{mE}(z): -m^2 :\partial_z \alpha(E, z) \Gamma_{mE}(z):\\
  \label{eq:k_circ_freefld}
  \overline{\mrm{k}}_m(z) &= \begin{cases}\frac{1}{m}\Gamma_{mE}(z) & m \neq 0 \\
    \pmb{\phi}(E, z)-E-\alpha_0(E)\log z & m = 0\end{cases}\\
    \overline\sigma^+_m(z) &= :\alpha(\sigma_+, z)\Gamma_{mE}(z):\\
    \label{eq:sm_circ_freefld}
    \overline \sigma^-_m(z) &= :\alpha(\sigma_-, z)\Gamma_{mE}(z):.
\end{align}

\begin{thm}\label{thm:variant_LA}
The assignment \eqref{eq:d_circ_freefld}- \eqref{eq:sm_circ_freefld}
gives a representation of $\mf g_{T^*E}^\circ$ on $V_{T^*E}$. 
\end{thm}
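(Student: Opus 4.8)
The plan is to reduce the theorem to the verification of the operator product expansion \eqref{eq:upsilon_circ_ope}, since the text has already recorded that the bracket \eqref{eq:wcirc_bracket} is equivalent to \eqref{eq:upsilon_circ_ope} and the passage from an OPE to the commutator of Fourier modes is standard. I would first fix the dictionary
\[ \overline{\mrm k}_m(z) = \overline{\Upsilon}_m(E, z), \quad \overline{\mrm d}_m(z) = \overline{\Upsilon}_m(\pt, z), \quad \overline\sigma^{\pm}_m(z) = \overline{\Upsilon}_m(\sigma_\pm, z), \]
checking that each field expands in the powers $z^{-n-\deg\gamma}$ prescribed by the convention defining $\overline{\Upsilon}_m(\gamma, z)$. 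This power equals the conformal weight of the field: for instance $\Gamma_{mE}$ has weight $\tfrac12\langle mE, mE\rangle = 0 = \deg E$ precisely because $E$ is isotropic in $\RN{2}_{1,1}$. I would then record the convolution table on $B = \{E, \sigma_+, \sigma_-, \pt\}$, in which $\pt$ acts as the unit for $\star$, $\sigma_+\star\sigma_- = \pm E$, $E\star\pt = E$, and every remaining product vanishes for degree reasons ($\deg(\gamma\star\gamma') = \deg\gamma + \deg\gamma' - 2$) or by supercommutativity; together with the pairing table $\langle E,\pt\rangle = 1$, $\langle\sigma_+,\sigma_-\rangle = \pm1$, with $\langle\pt,\pt\rangle$ and all pairings among $\{E,\sigma_\pm\}$ equal to zero.

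The computational engine is the noncommutative Wick formula \eqref{eq:ncwick}, fed by three elementary OPEs: the Heisenberg/Clifford contraction $\alpha(\gamma, z)\alpha(\gamma', w)\sim \langle\gamma,\gamma'\rangle(z-w)^{-2}$; the lattice contraction $\alpha(\gamma, z)\Gamma_{m'E}(w)\sim \langle\gamma, m'E\rangle(z-w)^{-1}\Gamma_{m'E}(w)$ together with the stress-tensor OPEs of the conformal field; and the crucial fact that $\langle E, E\rangle = 0$, so $\Gamma_{mE}(z)\Gamma_{m'E}(w) = {:\Gamma_{mE}(z)\Gamma_{m'E}(w):}$ carries no singular prefactor and fuses to $\Gamma_{(m+m')E}$. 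This isotropy collapses all exponential self-interactions, so that the only surviving singularities come from pairing a current $\alpha(\gamma,\cdot)$ against the momentum of a vertex operator, i.e.\ from the finitely many nonzero entries of the pairing table above.

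I would then organize the verification by the ten graded pairs of fields. The pairs $\overline{\mrm k}\,\overline{\mrm k}$, $\overline{\mrm k}\,\overline\sigma^\pm$ and $\overline\sigma^\pm\,\overline\sigma^\pm$ involve only vanishing pairings and vanishing convolution products, so by isotropy their OPEs are regular, matching a vanishing right-hand side of \eqref{eq:upsilon_circ_ope}. The pairs $\overline{\mrm k}\,\overline{\mrm d}$ and $\overline\sigma^+\,\overline\sigma^-$ each involve a single contraction, producing the term proportional to $\overline{\Upsilon}_{m+m'}(\gamma\star\gamma', w)$ --- namely $E\star\pt = E$, respectively $\sigma_+\star\sigma_- = \pm E$ --- and, when $m' = -m$, the central contribution $\delta_{m,-m'}\langle\gamma,\gamma'\rangle\pmb c$. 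For the even fields one can alternatively read off these relations by specializing Billig's Theorem \ref{thm:billig_full} to $\mf g = \mf{gl}_0$ under the representation \eqref{eq:hvir_rep}, after identifying $\overline{\mrm d}_m$ with the divergence-free (Hamiltonian) combination of Billig's derivation fields; this furnishes an independent check of the even sector.

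The main obstacle is the self-OPE $\overline{\mrm d}_m(z)\,\overline{\mrm d}_{m'}(w)$, together with the mixed OPEs $\overline\sigma^\pm_m(z)\,\overline{\mrm d}_{m'}(w)$, because $\overline{\mrm d}_m$ is a composite of the stress tensor, of $\partial_z\alpha(\pt)$ and of $m^2\partial_z\alpha(E)$, all dressed by $\Gamma_{mE}$. A first simplification is to observe that the first and third terms of \eqref{eq:d_circ_freefld} combine, since $\omega_m(z) = \omega(z) - m\partial_z\alpha(E,z) = Y(\nu_m, z)$, into $m:\omega_m(z)\Gamma_{mE}(z):$, so that $\overline{\mrm d}_m$ is expressed through the $c = 0$ Virasoro field $\omega_m$; this makes the cancellation of the Virasoro anomaly against the Heisenberg contribution built in by construction, leaving no spurious high-order poles or central terms (consistent with $\langle\pt,\pt\rangle = 0$). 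What remains is to handle the double contractions via quasiassociativity \eqref{eq:quasiassoc} and the singular cross-contraction coming from $\langle\pt, E\rangle = 1$, and to confirm that matching the $(z-w)^{-2}$ and $(z-w)^{-1}$ coefficients against $(m+m')\overline{\Upsilon}_{m+m'}(\gamma\star\gamma', w)$ and $m\,\partial_w\overline{\Upsilon}_{m+m'}(\gamma\star\gamma', w)$, under the degree-shifted mode convention, reproduces the full combination $\kappa(a,b,c,d;\gamma,\eta)$ including its degree-correction terms $-a\deg(\gamma\star\eta) + (a+c)(\deg\gamma - 1)$. Carrying out this bookkeeping for the $\overline{\mrm d}$-involving pairs is the crux of the argument.
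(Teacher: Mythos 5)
Your proposal is correct in structure and follows essentially the same route as the paper's proof: reduce the theorem to verifying the OPE \eqref{eq:upsilon_circ_ope}, exploit the isotropy of $E$ so that $:\Gamma_{mE}(z)\Gamma_{m'E}(w):$ fuses to $\Gamma_{(m+m')E}$ with no singular prefactor, and check the remaining pairs with the noncommutative Wick formula \eqref{eq:ncwick} and quasiassociativity \eqref{eq:quasiassoc}; your dictionary, convolution table, pairing table, and the rewriting $\overline{\mrm d}_m = m:\omega_m(z)\Gamma_{mE}(z): - \partial_z:\alpha(\pt,z)\Gamma_{mE}(z):$ are all consistent with the paper. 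The ``bookkeeping'' you defer for the $\overline{\mrm d}$-involving pairs is exactly what the paper's proof supplies: for the odd pairs it lists the Wick OPEs together with the two fusion identities $:\Gamma_{mE}\Gamma_{m'E}: = \Gamma_{(m+m')E}$ and $:\partial\Gamma_{mE}\,\Gamma_{m'E}: = \tfrac{m}{m+m'}\partial\Gamma_{(m+m')E}$ (equal to $m\,\alpha(E,z)$ when $m'=-m$), while the purely bosonic pairs are outsourced to \cite[\S 5]{Chen_Li_Tan_2021} via the stress-tensor identity \eqref{eq:getting_back_OG_omega}. One caveat on your fallback for the even sector: Theorem \ref{thm:billig_full} as stated produces a representation of the genuine toroidal algebra $\mc T(\mf{gl}_0)$, whose relations are of the multiplicative/coordinated type recovered later through Proposition \ref{prop:modified_gives_coord}, not the $\circ$-variant relations \eqref{eq:upsilon_circ_ope} needed here; the correct reference for the $\circ$-variant is \cite[\S 5]{Chen_Li_Tan_2021}, and the match with that construction is precisely why the paper records $\omega_{bc} = \overline{\omega_{bc}} + \partial J_{bc}$. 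Since your primary route is direct computation and Billig is invoked only as a check, this misattribution is not fatal, but as written that ``independent check'' would require passing backwards through the coordinated-module correspondence.
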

\begin{proof}
  Because of the formula 
  \begin{equation}
  \label{eq:getting_back_OG_omega} 
  \omega_{bc}(z) =\overline{\omega_{bc}}(z) - (-\partial J_{bc})(z)
  \end{equation}
  the fields $\overline{\mrm d}_m(z)$ and $\overline {\mrm{k}}_m(z)$ satisfy the relations in the variant of the toroidal extended affine Lie algebra of $\mf{gl}_0$, denoted  $\mf t(\mf{gl}_0)^\circ$ on account of \cite[\S 5]{Chen_Li_Tan_2021} which uses the stress tensor \eqref{eq:getting_back_OG_omega} in the fermionic factor of $V_{T^*E}$. The relevant calculations are also close to those in \cite{Billig_2007}. This provides the commutation relations between the bosonic fields.

For the fermionic fields, using the vanishing of the central charge of the field $\omega(z)$ and the noncommutative Wick formula we obtain the following 
\begin{align*}
  \alpha(\gamma, z)\alpha(\gamma', w)&\sim \frac{\langle\gamma, \gamma'\rangle}{(z-w)^2 }\\
   \alpha(\gamma, z)\Gamma_{mE}(z)&\sim \frac{\langle\gamma, mE \rangle\Gamma_{mE}(w)}{z-w } \\
   :\alpha(\sigma_+, z)\Gamma_{mE}(z)::\alpha(\sigma_-, w)\Gamma_{m
   'E}(w):&\sim \frac{m}{m+m'} \frac{\partial \Gamma_{(m+m')E}(w)}{z-w}  + \frac{\Gamma_{{(m+m')}E}(w)}{(z-w)^2} \\
   :\omega(z)\Gamma_{mE}(z)::\alpha(\sigma_\pm,w)\Gamma_{m'E}(w):&\sim  \frac{\partial :\alpha(\sigma_\pm, w)\Gamma_{(m+m')E}(w):}{z-w}  \\
   & ~~~ + \frac{:\alpha(\sigma_\pm, w)\Gamma_{(m+m')E}(w):}{(z-w)^2} \\
   : \alpha(\pt, z) \Gamma_{mE}(z):: \alpha(\sigma_{\pm}, w)\Gamma_{m'E}(w): &\sim m' \frac{:\alpha(\sigma_\pm, w)\Gamma_{(m+m')E}(w):}{z-w} 
\end{align*}
for which we also use the following formulae which follow from the vanishing of the central extension of the Heisenberg algebra of an isotropic vector space:
\begin{align*}
  : \Gamma_{mE}(z)\Gamma_{m'E}(z): &= \Gamma_{(m+m')E}(z)\\
:\partial \Gamma_{mE}(z) \Gamma_{m'E}(z): &= \begin{cases} \frac{m}{m+m'}\partial \Gamma_{(m+m')E}(z) & m \neq -m' \\
  m  \alpha(E, z) & m = -m'\end{cases}
\end{align*}
which establishes the OPE \eqref{eq:upsilon_circ_ope}. 
\end{proof}

\begin{rmk}
The OPEs between the fields $\overline{\mrm d_m}$ and $\overline{\mrm d_{m'}}$ may be computed directly using Wick's formula \eqref{eq:ncwick} and quasiassociativity \eqref{eq:quasiassoc}, the latter of which implies 
\begin{align*}
::\omega \Gamma_{mE} :\Gamma_{m'E}:(z)  &= :\omega \Gamma_{(m+m')E}:(z) + \partial \Gamma_{(m + m')E} (z) + :\partial \Gamma_{mE} \partial \Gamma_{m'E}:(z)\\
::\alpha(\pt, z)\Gamma_{mE}(z):\Gamma_{m'E}:(z) &= :\alpha(\pt, z) \Gamma_{(m+m')E}(z): +m'\Gamma_{(m+m')E}(z)\\
::\partial \alpha(\pt, z)\Gamma_{mE}(z):\Gamma_{m'E}:(z) &= :\partial \alpha(\pt, z) \Gamma_{(m+m')E}(z): -m'\Gamma_{(m+m')E}(z).
\end{align*} 
\end{rmk}
\subsubsection{Algebra from coordinated vertex operators}

Now we use Proposition \eqref{prop:modified_gives_coord} to produce a module for $\mf g_{T^*E}$. The operators $X^\phi(v,z)$ corresponding to the vertex operators \eqref{eq:d_circ_freefld}- \eqref{eq:sm_circ_freefld} are calculated from the definition to be
\begin{align}
  \label{eq:d_freefld}
  \mrm{d}_m(z)
  &=  mz^2:\omega(z) \Gamma_{mE}(z):  + m D_z \Gamma_{mE}(z) - D_z \left[ z: \alpha(\pt,z)  \Gamma_{mE}(z):\right] \\
  \nonumber
  &- m  z^2: \partial_z \alpha(mE, z) \Gamma_{mE}(z):\\
  \label{eq:k_freefld}
  \mrm{k}_m(z) &= \begin{cases}\frac{1}{m}\Gamma_{mE}(z) & m \neq 0 \\
    \pmb{\phi}(E, z)-E-\alpha_0(E)\log z & m = 0\end{cases}\\
    \sigma^+_m(z) &= :\pmb{\alpha}(\sigma_+, z)\Gamma_{mE}(z):\\
    \label{eq:sm_freefld}
    \sigma^-_m(z) &= :\pmb{\alpha}(\sigma_-, z)\Gamma_{mE}(z):.
\end{align}

Then from \eqref{eq:x_bracket} it follows that the fields \eqref{eq:d_freefld} - \eqref{eq:sm_freefld} satisfy the OPEs 
\begin{align}\mrm {d}_m(z) \mrm{d}_{m'}(w) &\sim  m\frac{D_w \mrm d_{m + m'}(w)w}{z-w} +(m+m')\frac{ \mrm d_{m + m'}(w)zw}{(z-w)^2} \label{eq:ddope}\\
  \mrm{d}_m(z) \mrm{k}_{m'}(w) &\sim m \frac{D_w \mrm{k}_{m+m'}(w)w}{z-w} + (m + m')\frac{\mrm{k}_{m+m'}(w)zw}{(z-w)^2} + \delta_{m + m',0}\frac{\mathbb{c} zw}{(z-w)^2}\label{eq:dkope}\\
\mrm{d}_m(z) \sigma^\pm_{m'}(w) &\sim m \frac{D_w \sigma^\pm_{m+m'}(w)w}{z-w} +(m + m')\frac{\sigma^\pm_{m+m'}(w)zw}{(z-w)^2}\label{eq:dsope} \\
\sigma^+_m(z) \sigma^-_{m'}(w) &\sim  m \frac{D_w \mrm{k}_{m+m'}(w)w}{z-w} + (m + m')\frac{\mrm{k}_{m+m'}(w)zw}{(z-w)^2} + \delta_{m + m',0}\frac{\mathbb{c} zw}{(z-w)^2}.\label{eq:ssope}
\end{align} 
We will combine the above fields into a $H^*(E)$-valued field $\mathcal{D}_m(z) = (\mrm{k}_m, \sigma^+_m, \sigma^-_m, \mrm{d}_m)(z)$ in the basis $\{E, \sigma_a, \sigma_b, \mrm{pt}\}$ and write $\mc{D}(\gamma, z)$ for the $\gamma \in H^*(E)$ component of $\mc D(z)$. We can then rewrite \eqref{eq:ddope}-\eqref{eq:ssope} as 
\begin{multline}
\mc{D}_m(\gamma, z) \mc{D}_{m'}(\gamma',w) \sim m D_w \mc{D}_{m + m'}(\gamma \star \gamma', w)\frac{w}{z-w} \\
+(m+m')\mc{D}_{m+m'}(\gamma \star \gamma',w)D_w \frac{w}{z-w}\\
+\delta_{m,-m'} \langle \gamma, \gamma' \rangle D_w\frac{w}{z-w} \label{eq:Dlabel_ope}
\end{multline}
exactly matching \eqref{eq:ope_mult_gen_fn}.

\begin{thm}\label{thm:voa_is_gterep}
  The vertex algebra $V_{T^*E}$ is an irreducible representation of $\mf g_{A_{-1}}$. 
  \end{thm}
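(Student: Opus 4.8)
The plan is to split the statement into two assertions: that the modes of the fields $\mc D_m(\gamma, z)$ realize $\mf g_{A_{-1}} = \mf g_{T^*E}$ on $V_{T^*E}$, and that this realization is irreducible. The first, representation-theoretic half I would obtain essentially for free from what precedes. Theorem \ref{thm:variant_LA} already shows that the ordinary vertex operators \eqref{eq:d_circ_freefld}--\eqref{eq:sm_circ_freefld} realize the variant algebra $\mf g_{T^*E}^\circ$ on $V_{T^*E}$. Since $V_{T^*E}$ is a module over itself, Proposition \ref{prop:modified_gives_coord} then produces the associated $\phi$-coordinated fields $X^\phi(v,z)$, and a direct evaluation of the modified vertex operators identifies these with the fields $\mc D_m(\gamma,z)$ of \eqref{eq:d_freefld}--\eqref{eq:sm_freefld}. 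Specializing the general $\phi$-coordinated bracket \eqref{eq:x_bracket} to these fields yields the OPE \eqref{eq:Dlabel_ope}, which is term-for-term the OPE \eqref{eq:ope_mult_gen_fn} encoding the defining relations \eqref{eq:bracket_mult_gen_fn} of $\mf g_{T^*E}$. Reading off Fourier coefficients thus gives a Lie superalgebra homomorphism $\mf g_{T^*E} \to \End(V_{T^*E})$ sending $w^{a,b}_\gamma$ to the relevant mode of $\mc D_a(\gamma,z)$ and $\pmb c$ to the identity.

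For irreducibility the strategy is to exhibit inside the enveloping image $U(\mf g_{T^*E}) \subset \End(V_{T^*E})$ both the full Heisenberg--Clifford algebra $\Heis_{H^*(E)}$ and the lattice translation operators, and then run the standard Fock-space reduction to the vacuum $\vac$. Setting $m=0$ isolates the oscillators cleanly: $\mrm k_0(z)$ of \eqref{eq:k_freefld} supplies the modes $\alpha_n(E)$, the fields $\sigma^\pm_0(z) = \pmb\alpha(\sigma_\pm,z)$ supply $\alpha_n(\sigma_\pm)$, and since $z\,\alpha(\pt,z) = \pmb\alpha(\pt,z)$ the field $\mrm d_0(z) = -D_z\,\pmb\alpha(\pt,z)$ supplies $n\,\alpha_n(\pt)$; together these recover every Nakajima operator $\alpha_n(\gamma)$ for $\gamma \in B = \{E,\sigma_+,\sigma_-,\pt\}$ and $n \neq 0$. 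For $m \neq 0$ the field $\mrm k_m(z) = \tfrac1m Y(e^{mE}, z)$ is a lattice vertex operator whose zero mode implements translation by $mE$ in the group-algebra factor $\C[L^+] = \C[e^{mE}]$ of \eqref{eq:main_voa}. Hence the image contains enough operators to act within each Fock sector and to move between the lattice sectors.

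With these operators in hand I would show that every nonzero submodule contains $\vac$, whence irreducibility. Applying the annihilation operators $\alpha_n(\gamma)$ with $n>0$ strictly lowers conformal weight, so any nonzero vector descends to the weight-zero subspace; because the class $E$ is isotropic, each sector vector $e^{mE}$ has weight $\tfrac12\langle mE,mE\rangle = 0$, so this subspace is spanned by the $e^{mE}$ (times the lowest fermionic states). These charge sectors are separated by the lattice grading, equivalently by the $E$-momentum detected through the dual pairing $\langle \pt, E\rangle = 1$, and a single $e^{mE}$ is carried to $\vac$ by the translation operator extracted from $\mrm k_{-m}$; the creation operators $\alpha_n(\gamma)$ with $n<0$ together with the $\mrm k_m$ then regenerate all of $V_{T^*E}$ from $\vac$. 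I expect this irreducibility step to be the main obstacle: unlike the case of a positive-definite lattice vertex algebra, here $L^+ = \Z E$ is isotropic and degenerate, so no standard simplicity result applies and one must control the weight-zero subspace by hand, in particular accounting for the fermionic zero-mode structure of $\mc F_{H^{\underline 1}(E)}$ and verifying that the charge sectors are genuinely separated by operators lying in $U(\mf g_{T^*E})$.
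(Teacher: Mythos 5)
Your representation-theoretic half is exactly the paper's own argument: the paper obtains \eqref{eq:d_freefld}--\eqref{eq:sm_freefld} as the $\phi$-coordinated operators attached by Proposition \ref{prop:modified_gives_coord} to the fields of Theorem \ref{thm:variant_LA}, and deduces the module structure from the identification of \eqref{eq:Dlabel_ope} with \eqref{eq:ope_mult_gen_fn}. Your irreducibility strategy (irreducibility of the Fock module plus operators moving between charge sectors) is also what the paper's one-sentence argument invokes, so up to that point you are faithfully reconstructing the proof.

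However, the obstacle you flag at the end is a genuine gap, and it cannot be closed in the way you hope: the charge sectors are \emph{not} separated by operators in the image of $U(\mf g_{T^*E})$. The only conceivable source of the sector-detecting operator $\alpha_0(\pt)$ (eigenvalue $m$ on $\mc F_{H^*(E)}\otimes e^{mE}$) is $\mrm d_m(z)$, and there its contribution cancels. Indeed, conjugation by the lattice translation $e^E$ sends $\alpha(\pt,z)\mapsto \alpha(\pt,z)-z^{-1}$ and fixes $\alpha(E,z)$, $\alpha(\sigma_\pm,z)$ and $\Gamma_{mE}(z)$; this changes the first term of \eqref{eq:d_freefld} by $-mz:\alpha(E,z)\Gamma_{mE}(z):$ and the third term by $+D_z\Gamma_{mE}(z)$, and these cancel because $\partial_z\Gamma_{mE}(z)=m:\alpha(E,z)\Gamma_{mE}(z):$. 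The remaining fields contain no $\alpha_0(\pt)$ at all (in $\mrm d_0(z)=\sum_n n\,\alpha_n(\pt)z^{-n}$ the zero mode is killed by the factor $n$, and the fermionic zero modes act as zero). Structurally this is forced: $\mf g_{T^*E}$ consists of Hamiltonian vector fields, the Hamiltonian vector field of a constant function vanishes, so the degree operators are absent and every $w^{0,0}_\gamma$ is central. Consequently the entire image of $U(\mf g_{T^*E})$ commutes with $e^E$, so $(e^E-1)V_{T^*E}$ is a nonzero proper invariant subspace, and your step ``a single $e^{mE}$ is carried to $\vac$'' provably cannot be reached starting from a genuine sum: the submodule generated by $\vac+e^E$ lies in $(1+e^E)V_{T^*E}$, which does not contain $\vac$. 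The conclusion, and the paper's proof, are therefore only correct in the graded sense: $V_{T^*E}$ has no proper submodules compatible with the charge--conformal-weight bigrading, equivalently it becomes irreducible once the grading operators $\alpha_0(\pt)$ and $L_0$ are adjoined to the algebra. In that category your argument closes with no further work --- a nonzero graded submodule contains a vector in a single sector, descent by annihilation operators gives $e^{mE}$, and the translation modes of $\mrm k_{m'}$ together with the creation operators regenerate everything --- and the separation question you correctly identified as the crux disappears because it is built into the grading.
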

\begin{proof}
The identification of \eqref{eq:ope_mult_gen_fn} with \eqref{eq:Dlabel_ope} demonstrates that $V_{T^*E}$ is a representation and the irreducibility of the Fock module as a representation of the Heisenberg algebra together with the nonvanishing of the Fourier coefficients of $\mc D(z)$ implies irreducibility. 
\end{proof}

\subsection{Determination by Heisenberg commutator}

Here we show that the field $\mc D_m(\Gamma, z)$ is determined by its commutator with elements of $\mc Heis_{1/0}$. 

Recall the subspace $\End(V_{T^*E})_m \subset \End(V_{T^*E})$ from \ref{ssec:savo} and let $\End(V_{T^*E})_{m,n} \subset \End(V_{T^*E})_m$ denote the subspace of $\Phi$ such that $\Phi(v) \in {V_{T^*E}}_{\alpha + (m,n)}$ if $v \in {V_{T^*E}}_\alpha$. 

\begin{thm}
Let $\mc D'_{m}(\gamma, z)$ for $m \in \Z^\times$ be a field such that $\mc D'_{m}(\gamma, z)[z^{-n}] \in \End(V_{T^*E})_{m,n}$ for all $n \in \Z$. If 
\[ [\mc D_0(\gamma, z), \mc D'_{m}(\eta, w)] = m \mc D'_{m}(\gamma \star \eta, w) D_w \delta(w/z) \] 
then $\mc D'_m(\eta, z) = c \mc D_m(\eta, z)$ for some $c \in \Q$. 
\end{thm}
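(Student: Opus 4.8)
The plan is to prove a rigidity statement: the prescribed commutator with the slope-$0$ currents $\mc D_0(\gamma, z)$ pins $\mc D'_m$ down to a single scalar. First I would pass to modes. Writing $\mc D_?(\gamma, z) = \sum_n \mc D_?(\gamma)_n z^{-n}$ and using $D_w\delta(w/z) = \sum_k k\,w^k z^{-k}$, the hypothesis becomes
\[ [\mc D_0(\gamma)_{n_1}, \mc D'_m(\eta)_{n_2}] = m\,n_1\,\mc D'_m(\gamma\star\eta)_{n_1+n_2}, \]
and specializing the OPE \eqref{eq:Dlabel_ope} with first label $0$ (the central term drops since $m\neq 0$) shows the genuine field $\mc D_m$ satisfies exactly the same relation. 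It therefore suffices to put $\Delta(\gamma, z) = \mc D'_m(\gamma, z) - c\,\mc D_m(\gamma, z)$ for a scalar $c$ to be fixed, note that $\Delta$ again obeys these homogeneous relations together with the support condition $\Delta(\gamma)_n \in \End(V_{T^*E})_{m,n}$, and prove $\Delta \equiv 0$ for the correct $c$.

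The second step is a double reduction powered by the convolution algebra $(H^*(E), \star)$. Since $[\pt] = \sigma_+\sigma_-$ is the $\star$-unit and $\sigma_+,\sigma_-$ generate $H^*(E)$ from it (the only nonzero products being $\sigma_\pm\star[\pt]=\sigma_\pm$ and $\sigma_+\star\sigma_-$ a nonzero multiple of $E$), the operators $\ad\mc D_0(\sigma_\pm)_{n_1}$ carry the $[\pt]$-component of $\Delta$ onto its $\sigma_\pm$- and $E$-components. Hence all four components of $\Delta$ are determined by the single component $\Delta(\pt, z)$, and crucially this is governed by the \emph{same} relations for $\mc D'_m$ as for $\mc D_m$, so one common scalar $c$ controls all components at once. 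Within the $[\pt]$-component, $[\pt]$ being the unit gives $[\mc D_0(\pt)_{n_1}, \Delta(\pt)_{n_2}] = m\,n_1\,\Delta(\pt)_{n_1+n_2}$, so the fixed modes $\mc D_0(\pt)_{n_1}$ with $n_1\neq 0$ shift the weight index and relate all modes of $\Delta(\pt, z)$ to a single extremal one. Applying the relations to $\vac$, on which $\mc D_0(\gamma)_{n_1}\vac = 0$ for $n_1>0$, shows the extremal states $\mc D'_m(\eta)_{n_2}\vac$ in the charge-$m$ sector are killed by all positive currents, hence are ground-state--like, i.e. proportional to $e^{mE}$ up to the finite fermionic zero-mode degeneracy.

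I would then fix $c$. The free-field formulas \eqref{eq:d_freefld}--\eqref{eq:sm_freefld} show that the leading term of $\mc D_m(\pt, z)\vac$ is governed by $\Gamma_{mE}(z)\vac = Y(e^{mE}, z)\vac$, whose leading coefficient creates $e^{mE}$ and is nonzero; choosing $c$ to match this coefficient for $\mc D'_m$ makes the extremal state of $\Delta$ vanish. Because the extremal coefficients of the other components and other modes are all tied to this one through the relations of the previous step, a single $c$ annihilates every extremal state simultaneously; the weight-shifting and convolution relations then force $\Delta(\gamma)_n\vac = 0$ for all $\gamma,n$, and propagating through the Heisenberg action yields $\Delta \equiv 0$ on the charge-$0$ cyclic submodule $\mc Heis_{1/0}\cdot\vac$.

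The hard part will be upgrading this to the vanishing of $\Delta$ as an operator on \emph{all} charge sectors, since no element of $\mc Heis_{1/0}$ changes the charge and one cannot reach the sectors of charge $\neq 0$ from $\vac$ using the controlled currents alone. I expect to resolve this by identifying $\mc D'_m$ with the $\phi$-coordinated vertex operator $X^\phi$ of its creating state via Proposition \ref{prop:modified_gives_coord}, so that the state identity $\Delta(\gamma, z)\vac = 0$ upgrades to the operator identity $\Delta\equiv 0$ on $V_{T^*E}$ by the state--field correspondence, the irreducibility of Theorem \ref{thm:voa_is_gterep} guaranteeing no extra invariant subspace. The accompanying technical obstacle is bookkeeping: the modes of $\mc D_0(\gamma,z)$ omit a few low Heisenberg modes (notably the charge operator $\alpha_0(E)$ and $\alpha_1(\pt)$), so one must verify that the controlled modes still act irreducibly on each charge-$m$ Fock sector and that the fermionic zero modes $\theta^b,\theta^c,c_0$ do not enlarge the space of extremal states beyond what the single scalar $c$ can absorb.
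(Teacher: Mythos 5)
Your route is genuinely different from the paper's: the paper never applies anything to the vacuum, but instead works entirely at the operator level, expanding $\mc D'_m(\mathrm{pt},z)$ into a part linear in the $\alpha_j(\mathrm{pt})$, a part bilinear in the fermionic modes, and a remainder built from the $\alpha_k(E)$ alone, each piece being pinned down by its commutators with Heisenberg generators (the $E$ and $\sigma_{a/b}$ components are delegated to standard uniqueness results). Your steps up through the vanishing of $\Delta = \mc D'_m - c\,\mc D_m$ on the charge-zero sector are sound: the mode identity, the reduction of all components to the $\mathrm{pt}$-component via $\operatorname{ad}\mc D_0(\sigma_\pm)$, the identification of the extremal state of $\Delta(\mathrm{pt},z)\vac$ with a multiple of $e^{mE}$ (forced by weight reasons to occur at a single mode index), and the induction over Heisenberg words propagating $\Delta(\gamma)_n\vac=0$ to all of $U(\mc Heis_{H^*(E)})\vac$.

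The gap is the final upgrade, and the tools you propose cannot close it. Proposition \ref{prop:modified_gives_coord} only goes one way: it manufactures a $\phi$-coordinated field from a state, and provides no criterion for recognizing an abstract field as $X^\phi$ of its creating state. A Goddard-type uniqueness argument would require mutual locality of $\mc D'_m$ with a \emph{generating} family of fields of $V_{T^*E}$; the Heisenberg fields do not generate the lattice vertex algebra --- the charge-shifting operators $\Gamma_{kE}(z) = Y(e^{kE},z)$ are also needed, and the hypothesis says nothing about commutators of $\mc D'_m$ with them. Likewise Theorem \ref{thm:voa_is_gterep} has no purchase: irreducibility gives Schur-type conclusions only for operators intertwining the full $\mf g_{T^*E}$-action, and the commutators of $\Delta$ with the charge-shifting generators $\mc D_k(\gamma)_n$, $k\neq 0$, are exactly what you do not know. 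In fact, without a further input the statement you are trying to prove at that stage is \emph{false}: if $u\in\End(V_{T^*E})$ acts by an arbitrary scalar $c_k$ on the charge-$k$ sector $\mc F_{H^*(E)}\otimes e^{kE}$, then $\mc D_m(\gamma,z)\circ u$ is a field, shifts the bigrading by $(m,n)$ sector by sector, and satisfies the stated bracket with $\mc D_0$ (since $u$ commutes with every charge-preserving operator), yet it is a scalar multiple of $\mc D_m$ only if all the $c_k$ coincide.

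The missing ingredient is the precise content of the hypothesis $\mc D'_m(\gamma,z)[z^{-n}]\in\End(V_{T^*E})_{m,n}$: as defined in \ref{ssec:savo}, membership in $\End(V_{T^*E})_m$ means $\phi(e^{kE}\otimes v) = e^{(k+m)E}\otimes v'$ with $v'$ not depending on $k$, i.e.\ the operator acts the same way in every charge sector. This is also what the paper's proof uses implicitly, by restricting to fields whose coefficients lie in the closure of $\C[L^+]\otimes U(\mc Heis_{H^*(E)})$, all of which act uniformly across sectors. Once this is invoked, your argument finishes with no vertex-algebra machinery at all: $\Delta$ vanishes on the charge-zero sector, hence, being uniform across sectors, vanishes identically. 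Replace the appeal to Proposition \ref{prop:modified_gives_coord} and irreducibility by this single observation; as written, that appeal is the one step of your proof that would fail.
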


\begin{proof}
For the $\mc D'_m(E,z)$ component of $\mc D'_m$ this is a minor variation on a standard fact in the theory of VOAs, e.g. \cite[\S 5.2]{Kac_1998}.  Likewise the $\sigma_a$ and $\sigma_b$ components $\mc D'_{m}(\sigma_{a/b}, z)$ are determined by the argument in \cite[\S 2]{Heluani_Kac_2007} or by a simpler version of the argument below. Furthermore the constant $c$ must be the same for each component on account of the commutator with the $\alpha_k(\sigma_{a/b})$ Fourier coefficients of $\mc D_0(\sigma_{a/b}, z)$. 

We will now show that the commutation relations with Heisenberg generators determine $\mc D_m(A, z)$ up to the same multiplicative constant $c$. Given $\gamma \in \{ E, \sigma_a, \sigma_b, \mrm{pt}\}$  and $\ell \ge 0$ let $U_{\gamma, \ell} \subset \mc U(\mc Heis_{H^*(E)})$ denote the span of element of the form $:\alpha_{k_{1}}(\gamma)\cdots \alpha_{k_{\ell}}(\gamma) x : $ for $x \in U(\mc Heis_{\gamma^\perp})$. Let $\mf{glf}_{\gamma, \ell}(V_{T^*E})\subset \mf{glf}(V_{T^*E})$ denote the space of fields all of whose coefficients are approximated by sequences of elements of $\C[L^+]\otimes U_{\gamma, \ell}$ in the usual topology. Then 
\[ [\alpha_k(E), \mc D'_m(\mrm{pt}, z)] = z^k km\mc D'_m(E, z) \] 
which implies that $\mc D'_m(\mrm{pt}, z) \in \mf{glf}_{\mrm{pt}, 1}(V_{T^*E})$. Using the same observation we know that
\[ \mc D'_m(\mrm{pt}, z) = \sum_{j \in \Z} :\alpha_{j}(\mrm{pt})q_j(z): + \sum_{k,\ell \in \Z} : \alpha_k(\sigma_a)\alpha_j(\sigma_b) r_{k, \ell}(z): + s(z) \] 
where $q_j(z), r_{k, \ell}(z), s(z) \in \mf{glf}_{\gamma, 0}(V_{T^*E})$ for all $\gamma \in E^{\perp}$ (i.e. all coefficients are "built from $\alpha_k(E)$ only") and further that $q_j(z)$ and $r_{k, \ell}(z)$ are uniquely determined by the brackets $[\alpha_k(\gamma), \Omega_m(\mrm{pt},z)]$ with $\gamma = E$ and $\gamma =  \sigma_a$ respectively. Finally notice that if $s'(z) = :s(z)\Gamma_{-mE}(z):$ then 
\[ [\alpha_{k}(\mrm{pt}) , s'(z)] = cm^2k(1-k)z^k \] 
which determines $s(z) = : s'(z)\Gamma_{mE}(z):$.  
\end{proof}

\subsection{Other surfaces}
For the toroidal extended affine Lie algebras outside of type $A_{-1}$, the geometry of the Hilbert schemes no longer provides us with the symplectic fermion VOA and therefore we lose access to the Virasoro algebra at $c = -2$. By geometric arguments and general expectations concerning cohomological Hall algebras of surfaces, where it is expected that the cohomological Hall algebra of a surface contains the universal enveloping algebra of a Lie algebra, we expect a construction of an algebra with the same character as the toroidal extended affine algebra of type $R$ on 
\[V_{X_R} = \bigoplus_{\alpha \in R^{ell}} H^*_T(M(v + [\alpha]; \mc F)).\]

As a vector space, the first chern character of vectors of the type $v  + [\alpha]$ gives a sublattice of $\NS(\overline{X_R})$ of type $\widehat{R}$ giving an identification as vector space 
\[ V_{X_R} \simeq \mc F_{H^*(X_R)} \otimes \C_\epsilon[\widehat{R}]\] which is futhermore a sublattice of a tensor product of a free bosonic system with a sub-VOA of $V(\RN{1}_{1,9})$, the lattice VOA associated to the lattice $\RN{1}_{1,9}$. 

We embed this into the VOA 

\[ V_{\overline{X_R}} = \mc F_{H^*(\overline{X_R})} \otimes \C_\epsilon [\widehat{R}] \] 
defined as a sub-VOA of the Neron-Severi lattice VOA associated to the surface $\overline{X_R}$. 

Choose a curve 
\[ C \subset D_\infty \subset \overline{X_R} \]
such that $\langle C, C\rangle \neq 0$ and $\langle C, E\rangle = \langle C, \Theta\rangle = 0$ where $\Theta$ is the identity section of the identification of $\overline{X_R}$ with its relative compactified Jacobian and $E$ is the fiber class. Let  $\mc C  =  k[C] \in H^*(\overline{X_R})$ denote the normalized class so that $\langle \mc C, \mc C\rangle = 1$. 

Pick a class 
\[ d = [\Theta] + k_d [E] \] 
such that $\langle d, d\rangle = 0, \langle d, R\rangle = 0$ and $\langle d , [E]\rangle = 1$. 

Let $Q_\Theta^\ell \subset H^*_T(\overline{X}_R)_{loc}$ denote the span of the classes supported on $F_0$ and by $d$. This induces a decomposition 
\begin{equation}\label{eq:decomp1} Q_\Theta^{\ell}\oplus Q_\Theta^{\ell, \perp} = H^*_T(\overline{X}_R)_{loc}\end{equation}
such that $\mc C \in Q_\Theta^{\ell, \perp}$.

The conformal vector $\omega$ for $V_{\overline{X}_R}$ admits a decomposition 
\[\omega = \omega_{R} + \omega_{[E], d} + \omega_{\perp} \] 
based on the decomposition \eqref{eq:decomp1} and the further decomposition 
\[ Q_\Theta^\ell = R\otimes \C \oplus \C\langle[E], d \rangle\]
establishing a tensor decomposition
\[ V_{\overline{X_R}} = V_{\mf g_R, 1} \otimes V^+(\RN{2}_{1,1}) \otimes \mc F_{Q^{\ell, \perp}_{\Theta}}. \]  
Further we can define the conformal field 
\[ \omega_{\perp, \mc C}(z) = \omega_{\perp}(z) - \partial \alpha([\mc C],z). \] 
This gives a representation 
\begin{align}
   V_{HVir} &\to \mc F_{Q^{\ell, \perp}_{\Theta}}\label{eq:norm_1_heis}\\
   \omega_{aux}(z) &\mapsto \omega_{\perp, \mc C}(z) \notag \\
   I(z) &\mapsto \alpha([\mc C], z). \notag 
\end{align}
\begin{thm}\label{thm:goofy_other_surfaces}
The assignments \eqref{eq:billig_K}-\eqref{eq:billig_Dt} give a representation 
\[ \mc T (\mf g_R) \to \End(V_{\overline{X_R}})\]
inducing a representation 
\[\mf t(\mf g_{R}) \to  \End(V_{\overline{X_R}}).\]
\end{thm}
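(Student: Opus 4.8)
The plan is to derive both representations from Billig's Theorem~\ref{thm:billig_full}, applied to the tensor decomposition $V_{\overline{X_R}}=V_{\mf g_R,1}\otimes V^+(\RN{2}_{1,1})\otimes\mc F_{Q^{\ell,\perp}_\Theta}$ recorded just above the statement. Two of the three tensor factors demanded by the theorem are already present: the sublattice $R\otimes\C\subset Q^\ell_\Theta$ supplies $V_{\mf g_R,1}$ with its conformal vector $\omega_R$, and the hyperbolic summand $\C\langle[E],d\rangle$ supplies $V^+(\RN{2}_{1,1})$ with $\omega_{[E],d}$. What remains is to realize the factor $V_{HVir}$, for which I would use the assignment \eqref{eq:norm_1_heis} sending $\omega_{aux}\mapsto\omega_{\perp,\mc C}$ and $I\mapsto\alpha([\mc C],z)$ into $\mc F_{Q^{\ell,\perp}_\Theta}$, and verify that it is a homomorphism of vertex algebras with the prescribed central charges $k_V=-2$, $k_H=1$, $k_{HV}=\tfrac12$.

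First I would check that the three prospective factors mutually commute, so that the tensor product genuinely acts on $V_{\overline{X_R}}$. This is forced by the orthogonal splitting \eqref{eq:decomp1} together with the defining conditions $\langle\mc C,[E]\rangle=\langle\mc C,\Theta\rangle=\langle\mc C,R\rangle=0$ on the curve class $\mc C$, which make $\alpha([\mc C],z)$---and hence $\omega_{\perp,\mc C}$---have trivial singular OPE with every generating field of $V_{\mf g_R,1}$ and of $V^+(\RN{2}_{1,1})$. Two of the three central-charge conditions are then quick: the normalization $\langle\mc C,\mc C\rangle=1$ gives $\alpha([\mc C],z)\alpha([\mc C],w)\sim(z-w)^{-2}$, so $k_H=1$; and computing the coefficient of $(z-w)^{-3}$ in $\omega_{\perp,\mc C}(z)\alpha([\mc C],w)$ with the noncommutative Wick formula \eqref{eq:ncwick}---to which the correction $-\partial\alpha([\mc C])$ contributes---must return $-2k_{HV}=-1$, giving $k_{HV}=\tfrac12$.

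The main obstacle is the condition $k_V=-2$, i.e.\ that $\omega_{\perp,\mc C}$ generates a Virasoro field of central charge $-2$. This is the role played in type $A_{-1}$ by the symplectic-fermion stress tensor $\overline{\omega}_{bc}$ in \eqref{eq:hvir_rep}, where the shift \eqref{eq:new_omega} preserves $c=-2$ precisely because the anomaly of $\omega_{bc}$ against $J_{bc}$ cancels the fourth-order pole produced by $\partial J_{bc}\,\partial J_{bc}$. Here the correction $-\partial\alpha([\mc C])$ plays the role of $-\partial J_{bc}$, and I would extract the coefficient of $(z-w)^{-4}$ in $\omega_{\perp,\mc C}(z)\omega_{\perp,\mc C}(w)$ via \eqref{eq:ncwick}. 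The delicate point is that $\omega_\perp$---the restriction to $Q^{\ell,\perp}_\Theta$ of the conformal vector $\omega$ of $V_{\overline{X_R}}$---must carry exactly the anomaly against $\alpha([\mc C])$ needed for the Feigin--Fuks contribution of $-\partial\alpha([\mc C])$ to bring the total central charge to $-2$, uniformly in $R$. This rests on the intersection numbers of $\mc C$, whose normalization $\langle\mc C,\mc C\rangle=1$ and orthogonality to $[E]$, $\Theta$ and $R$ are tailored to this computation; pinning them down, and hence $c=-2$, is the crux.

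Once the three central charges are confirmed, \eqref{eq:norm_1_heis} is a homomorphism $V_{HVir}\to\mc F_{Q^{\ell,\perp}_\Theta}$, so $V_{\overline{X_R}}$ becomes a module for $V_{\mf g_R,1}\otimes V^+(\RN{2}_{1,1})\otimes V_{HVir}$ with the central charges required by Theorem~\ref{thm:billig_full}, and \eqref{eq:billig_K}--\eqref{eq:billig_Dt} give the action $\mc T(\mf g_R)\to\End(V_{\overline{X_R}})$. For the final assertion I would note that $\mf t(\mf g_R)=\mf g_R\otimes A\oplus\mc K\oplus\mc D_{div}$ is the Lie subalgebra of $\mc T(\mf g_R)=\mf g_R\otimes A\oplus\mc K\oplus\mc T_A$ obtained by restricting the vector-field summand to the divergence-free fields $\mc D_{div}$ spanned by the $V_{a,b}$; since these are the corresponding combinations of the images of $\mrm{d}_s(z)$ and $\mrm{d}_t(z)$ in \eqref{eq:billig_Dt}, the toroidal representation restricts to the induced representation $\mf t(\mf g_R)\to\End(V_{\overline{X_R}})$ with no further computation.
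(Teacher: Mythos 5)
Your overall route coincides with the paper's: feed the tensor decomposition of $V_{\overline{X_R}}$ and the Heisenberg--Virasoro assignment \eqref{eq:norm_1_heis} into Theorem \ref{thm:billig_full}, then pass to $\mf t(\mf g_R)$ (the paper does this last step through the $\phi$-coordinated module construction of \cite{Billig_2007} and \cite{Chen_Li_Tan_2021} rather than by restriction along $\mf t(\mf g_R)\subset\mc T(\mf g_R)$, but for the bare statement your reading is defensible, as is your $k_H=1$ check). The genuine gap sits exactly at the step you call the crux, and it is worse than an omission: the verifications you sketch come out wrong. For the mixed charge, $\alpha([\mc C],z)$ is primary for $\omega_\perp$, so the only cubic pole in $\omega_{\perp,\mc C}(z)\,\alpha([\mc C],w)$ comes from the correction term, and
\[
-\partial_z\alpha([\mc C],z)\,\alpha([\mc C],w)\sim-\partial_z\frac{\langle\mc C,\mc C\rangle}{(z-w)^2}=\frac{+2}{(z-w)^3},
\]
so \eqref{eq:norm_1_heis} yields $-2k_{HV}=+2$, i.e.\ $k_{HV}=-1$, not $\tfrac12$. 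Your analogy with \eqref{eq:hvir_rep} fails precisely because the value $\tfrac12$ in the $bc$ case comes from two features absent here: $\omega_{bc}$ carries its own anomaly $-1/(z-w)^3$ against $J_{bc}$, and $I$ is sent to $-J_{bc}$, with a sign. For the Virasoro charge, the Feigin--Fuks computation gives
\[
c\bigl(\omega_\perp-\partial\alpha([\mc C])\bigr)=\on{rank}Q^{\ell,\perp}_\Theta-12\langle\mc C,\mc C\rangle=(10-r)-12=-(2+r),\qquad r=\on{rank}R,
\]
since by \eqref{eq:decomp1} $Q^{\ell,\perp}_\Theta$ is the complement of the $(r+2)$-dimensional $Q^\ell_\Theta$ inside the $12$-dimensional $H^*_T(\overline{X_R})_{loc}$. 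So no amount of ``pinning down the intersection numbers'' can produce $c=-2$: that hypothesis fails for every $R$ of positive rank, and even for $A_0$ the mixed charge is off.

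What \emph{is} uniform in $R$ --- and what matches the mechanism actually used in type $A_{-1}$, where the proof of Theorem \ref{thm:variant_LA} invokes ``the vanishing of the central charge of the field $\omega(z)$'' --- is the central charge of the total stress tensor entering \eqref{eq:billig_Dt}:
\[
c(\omega_R)+c(\omega_{[E],d})+c(\omega_{\perp,\mc C})=r+2-(2+r)=0 .
\]
This indicates that the usable form of the hypothesis is an anomaly-cancellation condition on the full conformal vector $\omega$, not the factor-wise triple $(k_V,k_H,k_{HV})=(-2,1,\tfrac12)$ quoted in Theorem \ref{thm:billig_full}, which is what the $bc$-assignment \eqref{eq:hvir_rep} realizes in the $c_{\mf g}=0$ situation. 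Since your plan commits to the factor-wise targets, it terminates in a contradiction rather than a proof, and the gap you admit at the crux cannot be closed as framed --- only after reformulating the hypotheses in terms of the total $\omega$ (or renormalizing \eqref{eq:norm_1_heis}). Note that the paper's own proof never attempts this verification: it is a citation of Theorem \ref{thm:billig_full} together with \cite{Billig_2007} in the language of \cite{Chen_Li_Tan_2021}, so the step you tried to supply is precisely the one the paper leaves implicit, and it does not go through in the form you propose.
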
 
\begin{proof}
This follows from Theorem \ref{thm:billig_full} and the induced construction of the toroidal extended affine Lie algebra using $\phi$-coordinated modules which in this instance is a direct consequence of \cite{Billig_2007} in the language of \cite{Chen_Li_Tan_2021}. 
\end{proof} 

For geometric reasons we hope to get an action of a Lie algebra with the same root spaces and root multiplicities at $\mf t(\mf g_R)$ to act on $V_{X_R}$. It is possible that the representation of Theorem \ref{thm:goofy_other_surfaces} is appropriate for the purposes of enumerative geometry after projecting along the natural map 
\[ V_{\overline{X_R}} \to V_{X_R} \] induced by pullback in cohomology along the inclusion 
\[ M(v; \mc O) \to \overline{X_R}^{[n(v)]}.\] 

\section{Logarithmic and superanalytic structures}
\subsection{Jacobian Lie algebra representation} 
Recall the Jacobian toroidal Lie algebra $\mf g_{T^*E}^{\Jac{}}$ and its subalgebra $\mf g_{T^*E}^{\Jac{}\tilde{}}$ from Section \ref{ssec:jac_g}.
\subsubsection{Representation on $V_{T^*E}^{\Jac{}}$}

\begin{thm}\label{thm:jac_irrep_thm}
The vertex algebra $V_{T^*E}^{\Jac}$ is an irreducible representation of $\mf g_{T^*E}^{\Jac{}}$.
\end{thm}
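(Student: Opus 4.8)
The plan is to assemble the $\mf g_{T^*E}^{\Jac{}}$-action on $V_{T^*E}^{\Jac{}} = V_{T^*E}\otimes H^*(\Jac(E))$ by combining the $\phi$-coordinated action of $\mf g_{T^*E}$ from Theorem \ref{thm:voa_is_gterep} with the zero- and logarithmic-mode operators of Proposition \ref{prop:bc_zero_modes_geo}, and then to deduce irreducibility from the joint irreducibility of the Heisenberg and Clifford actions. First I would fix the operator assignments: the subalgebra $\mf g_{T^*E}$ acts by the Fourier coefficients of the fields $\mc D_m(\gamma, z)$ of \eqref{eq:d_freefld}--\eqref{eq:sm_freefld}, now read on $V_{T^*E}^{\Jac{}}$ rather than on $V_{T^*E}$, the only change being that the components $\sigma^\pm_m(z) = {:}\pmb{\alpha}(\sigma_\pm, z)\Gamma_{mE}(z){:}$ carry the zero mode $\alpha_0(\sigma_\pm) = \xi_\pm\cup$ and the logarithmic modes supplied geometrically by \eqref{eq:zero_fermi}--\eqref{eq:log_fermi}; the remaining generators act by $w^{0,0}_{\sigma_\pm}\mapsto \xi_\pm\cup$ and $\pd{}{\xi_\pm}\mapsto\partial_{\xi_\pm}$ on the symplectic-fermion factor, while the base $H^*_T(\Jac(E))$ acts by cup product.

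Next I would verify the relations. Those internal to $\mf g_{T^*E}$ are unaffected by the passage to $V_{T^*E}^{\Jac{}}$, since the OPEs \eqref{eq:ddope}--\eqref{eq:ssope} are governed by the contraction $b(z)c(w)\sim(z-w)^{-1}$, which is insensitive to the presence of the zero and logarithmic modes; hence Theorem \ref{thm:voa_is_gterep} applies verbatim. The relations among the new generators are the Clifford relations $\{\pd{}{\xi_\pm}, \xi_\pm\cup\} = 1$ and $\{\pd{}{\xi_+}, \xi_-\cup\} = 0$ read off from \eqref{eq:asso_bc}, verified directly on $\C[\xi_+, \xi_-]$.

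The \emph{crux} is the set of brackets between the odd derivations $\pd{}{\xi_\pm}$ and the bulk fields $\mc D_m(\gamma, z)$. Because $\pd{}{\xi_\pm}$ supercommutes with every nonzero Heisenberg mode and with the vertex operators $\Gamma_{mE}(z)$, and pairs nontrivially only with the zero modes $\alpha_0(\sigma_\pm) = \xi_\pm\cup$ appearing inside the normally-ordered $\sigma^\pm_m(z)$ and inside $\omega_{bc}(z)$ within $\mrm d_m(z)$, each such bracket reduces to extracting a single zero-mode contraction; for instance $[\pd{}{\xi_+}, \sigma^+_m(z)]$ collapses to the purely bosonic field $\Gamma_{mE}(z)$, proportional to $\mrm k_m(z)$, while $[\pd{}{\xi_+}, \sigma^-_m(z)] = 0$, and symmetrically for $\pd{}{\xi_-}$. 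I would then confirm that these coincide with the defining brackets of $\mf g_{T^*E}^{\Jac{}}$ from Section \ref{ssec:jac_g}, using the Fourier--Mukai identification \eqref{eq:FM_is_oddF} of the convolution operators with the derivatives to reconcile the geometric and algebraic descriptions. This bookkeeping of odd signs and of zero-mode placements in the normal orderings is where I expect the main obstacle to lie.

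Finally, for irreducibility I would argue that the image algebra contains both the nonzero Heisenberg modes --- recovered from the Fourier coefficients of the $\mc D_m(\gamma, z)$ exactly as in the proof of Theorem \ref{thm:voa_is_gterep} --- acting on the $V_{T^*E}$ factor, and the full Clifford algebra $\langle \xi_\pm\cup, \partial_{\xi_\pm}\rangle$ acting on $H^*(\Jac(E)) = \C[\xi_+, \xi_-]$. The former acts irreducibly on $V_{T^*E}$ and the latter acts irreducibly, as the spinor module, on $H^*(\Jac(E))$; since the two families of operators act on complementary tensor factors, the product $V_{T^*E}\otimes H^*(\Jac(E))$ is irreducible. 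Here the derivations $\pd{}{\xi_\pm}$ are exactly what is needed to descend from arbitrary $\xi$-degree back to the constants, so that no proper nonzero invariant subspace can survive --- which is why irreducibility holds for the full $\mf g_{T^*E}^{\Jac{}}$ and not merely for its derivation-free subalgebra $\mf g_{T^*E}^{\Jac{}\tilde{}}$.
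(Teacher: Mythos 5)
Your proposal is correct and follows essentially the same route as the paper: the paper's proof simply runs the argument of Theorem \ref{thm:voa_is_gterep} again, augmented by the zero and logarithmic modes of $\pmb{\phi}(\sigma_\pm, z)$ acting via the geometric identification of Proposition \ref{prop:bc_zero_modes_geo} (cup product with $\xi_\pm$ and convolution $\tau_\pm \star_\otimes -$, the latter matching the odd derivations through \eqref{eq:FM_is_oddF}), which is precisely the structure you spell out. Your more detailed bookkeeping of the Clifford relations and the tensor-factor irreducibility argument is a legitimate expansion of what the paper leaves implicit.
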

\begin{proof}
The proof is exactly the same as Theorem \ref{thm:voa_is_gterep} except that we in addition have the actions of the zero and logarithmic modes of the fields 
\[ \pmb{\phi}(\sigma_+, z), ~~~ \pmb{\phi}(\sigma_-, z)\] 
under the identification of Proposition \ref{prop:bc_zero_modes_geo}. 
\end{proof}

\subsubsection{Self extensions}

The structure of the self-extensions of $V_{T^*E}$ and the non-semisimplicity of the module category of $V_{T^*E}$ is related to the identification of $\mc F_{H^{\underline{1}}(E)}$ with the symplectic fermion VOA. 

In particular while Theorem \ref{thm:jac_irrep_thm} implies the irreducibility o $V_{T^*E}^{\Jac}$ under $\mf g_{T^*E}^{\Jac{}}$ this result fails if we consider the representations as a vertex algebra module for $V_{T^*E}$ or as a module for $\mf g_{T^*E}^{\Jac{}\tilde{}}$. 

In particular, the vacuum module $V_{T^*E}$ has a $2$-dimensional space of self-extensions 
\[\C\langle \alpha(\sigma_a)_0, \alpha(\sigma_b)_0\rangle \] induced by those of the vacuum module of $\mc F_{H^{\underline 1}(E)}$. Given $\xi \in \C\langle \alpha(\sigma_a)_0, \alpha(\sigma_b)_0\rangle$ let 
$\mc E_{\xi}$ denote the highest weight $\mc{H}eis_{H^{\underline{1}}(E)}$-module annihilated by $\xi$ and let 
\[V_{\xi} = V^+(\RN{2}_{1,1})\otimes \mc E_{\xi}\]
denote the corresponding $\mf g_{T^*E}^{\Jac{}_0}$ module. The exact sequence 
\[0 \to \mc F_{H^{\underline 1}(E)}\to \mc E_\xi \to \mc F_{H^{\underline 1}(E)}\to 0\] induces an exact sequence 
\begin{equation}
  \label{eq:vte_self_ext}
  0 \to V_{T^*E} \to V_\xi \to V_{T^*E} \to 0 .
\end{equation}

of modules for $\mf g_{T^*E}^{\Jac{}\tilde{}}$ or of VOA modules of $V_{T^*E}$. 

This example highlights the fact that many of the features of the symplectic fermion model, e.g. non-diagonalizability of $L_0$, logarithmic correlation functions, extend naturally to those of the VOA denoted $V_{T^*E}$ 

In particular more generally we have an infinite projective resolution 

\begin{equation}
\cdots \to \C^n\otimes V_{T^*E}^{\Jac} \to \cdots \to \C^2\otimes V_{T^*E}^{\Jac} \to V_{T^*E}^{\Jac} \to V_{T^*E} \to 0 
\end{equation} 
of $V_{T^*E}$ as a VOA module over itself or as a $\mf g_{T^*E}^{\Jac{}\tilde{}}$-module inherited from the one on the symplectic fermions \cite{Creutzig_Ridout_2013}. 

\subsubsection{Superanalytic vertex operators and two variables}\label{ssec:savo}
Let $\{E, \sigma_a, \sigma_b, \rm{pt} \}$ denote a basis for $H^*(E)$ with $\langle \sigma_a, \sigma_b\rangle = 1$. Let $\psi, \chi$ denote anticommuting variables. An $\mc N = 2$ analytic superfield valued in $\mf g$ is a formal series in $\mf g[\![z, z^{-1}]\!][\chi, \psi]$. We do not yet impose any locality conditions on these fields and we do not impose invariance under superconformal transformations. We will consider the symbol $\oint (-)$ to denote $\frac{1}{2\pi i} \oint dz d\chi d\psi(-)$ where $\oint dz d\chi d\psi \psi\chi f(z) = \oint dz f(z)$. 
Consider the $\mc N = 2$ analytic superfield $w(z, \psi, \chi)$ with Fourier expansion
\begin{align*}
   w( z, \psi, \chi) &= \sum_{n \in \Z} w^{0,n}_{E}z^{-n} + \sum_{n\in \Z}w^{0,n}_{\sigma_a}z^{-n}\psi + \sum_{n\in \Z}w^{0,n}_{\sigma_b}z^{-n}\chi + \sum_{n\in \Z}w^{0,n}_{\rm{pt}}z^{-n}\psi\chi\\
&= w_{E}(z) + \psi w_{\sigma_a}(z) + \chi w_{\sigma_b}(z) + \psi\chi w_{\rm{pt}}(z). 
\end{align*}

More generally given $m \in \Z $ we define analytic superfields
\[W_m(z, \psi, \chi) = W_{m, E}(z) + \psi W_{m, \sigma_a}(z) +\chi W_{m, \sigma_b}(z) +\chi\psi W_{m, \rm{pt}}(z)  \] 
where 
\[ W_{m , \gamma}(z) = \sum_{n b\in \Z} w_{\gamma}^{m, n} z^{-n} \] 
so that $w= W_0$. 

Let $\End(V_{T^*E})_m$ denote the subspace of $\End(V_{T^*E})$ of operators $\phi$ such that $\phi(e^{kE}\otimes v) = e^{{k+m}E}\otimes v'$ for any $k$ and some $v'$. We may think of $W_m$ as sort of generalization of the $\mc N = 0$ vertex operator $\Gamma_{mE}(z)$ by considering the field 
\begin{align*}\label{eq:VO} \Gamma_m(z, \psi, \chi) &= : \exp (m (w(z, \psi, \chi) + E)): \\
  &= e^{mE} \left(1+ \psi \sum_{n \in \Z} z^{-n} \alpha_{n}(m\sigma_a) \right)\\
  & \left(1+ \chi \sum_{n \in \Z} z^{-n} \alpha_{n}(m\sigma_b) \right)
  :\left(1 +\psi\chi \sum_{n \in \Z} z^{-n} \alpha_n(m\rm{pt})\right)  \Gamma_{mE}(z):.
\end{align*}

This is an analogue of the $N = 1$ SUSY Lattice vertex operators of \cite{Heluani_Kac_2007} according to the formula
\[ \Gamma(z , \theta) = : \exp (\pmb{\phi}(\eta, z) + \theta\psi(\eta, z)  ): \]  
giving super vertex operators in a superconformal vertex algebra $V(L) \otimes F(\Pi  L\otimes \C)$ where $\psi(\eta, z)$ is a generating field for the free fermions  $F(\Pi  L\otimes \C)$. 

Then \eqref{eq:d_freefld}-\eqref{eq:sm_freefld} implies that up to 1) a rescaling of the coefficients and 2) the addition of terms involving the normal ordered product of Heisenberg fields and $\Gamma_{mE}(z)$ to the $\psi\chi$ coefficient of $\Gamma_{m(z, \psi, \chi)}$ we have an agreement between $\Gamma_{m, \psi, \chi}$ and $W_m(z, \psi, \chi)$. The correction terms indicate that the immediate generalization of the Kac-Heluani formula is not the appropriate formula in our context without modification.  It would be interesting to give a more satisfying explanation for the remaining terms in the language of $\mc N = 2$ analytic superfields and an attempt is given below. 
\subsubsection{}
We can also combine the generating series \eqref{eq:wgen} into two-variable formal series $\mathbb{Y}(\gamma, z_1, z_2) \in \mf g_{T^*E}[[z_1, z_2, z_1^{-1}, z_2^{-1}]]$ of the form 
\begin{equation}
\mathbb{Y}(\gamma, z_1, z_2) = \sum_{m,n\in \Z} \Upsilon_m(\gamma, z_1) z_2^{-m}. 
\end{equation}
with bracket 
\begin{multline}
\label{eq:two_variable_bracket}
[\mathbb{Y}(\gamma, z_1, z_2), \mathbb{Y}(\eta, w_1, w_2)] = \langle \gamma, \eta\rangle D_{w_1}\delta\left(\frac{w_1}{z_1}\right) \delta\left(\frac{w_2}{z_2}\right) \\
+ D_{w_1}\mathbb{Y}(\gamma\star\eta, w_1, w_2)\delta\left(\frac{w_1}{z_1}\right)D_{w_2}\delta\left(\frac{w_2}{z_2}\right)
- D_{w_2}\mathbb{Y}(\gamma\star\eta, w_1, w_2)D_{w_1}\delta\left(\frac{w_1}{z_1}\right)\delta\left(\frac{w_2}{z_2}\right) .
\end{multline}
Using the above analysis it is tempting to consider these as components of an analytic superfield 
\[ \mathbb{D}(z_1,z_2, \psi_+,  \psi_-)) := \mathbb{Y}(\pt, z_1,z_2) + \mathbb{Y}(\sigma_+,  z_1,z_2)\psi_- + \mathbb{Y}(\sigma_-,  z_1,z_2)\psi_+ + \mathbb{Y}(E,  z_1,z_2)\psi_+\psi_-\] 
of a $2|2$-valued coordinate $Z = (z; \psi) = (z_1, z_2; \psi_+, \psi_-) $ so that the bracket with a field at $W = (w; \psi) = (w_1, w_2; \phi_+, \phi_-)$ is given by 
\begin{align*}
  \label{eq:22_notation_D_bracket_local}
  [\mbb{D}(Z) ,\mbb{D}(W)] &= \frac 1 2 
  ( D_+D_{w_1} \mbb{D}(W)D_-D_{w_2} \delta -D_+ D_{w_2} \mbb{D}(W)D_- D_{w_1} \delta\\
  &+ D_-D_{w_1} \mbb{D}(W)D_+D_{w_2} \delta- D_-D_{w_2} \mbb{D}(W)D_+D_{w_1} \delta\\
  &+D_-D_+D_{w_1} \mbb{D}(W)D_{w_2} \delta- D_-D_+D_{w_2} \mbb{D}(W)D_{w_1} \delta\\
  &+  D_{w_1} \mbb{D}(W)D_{w_2}D_-D_+ \delta - D_{w_2} \mbb{D}(W)D_{w_1} D_-D_+\delta\\
  &+ D_-D_+D_{w_2} \mbb{D}(W)D_{w_1}\delta- D_-D_+D_{w_1} \mbb{D}(W)D_{w_2}\delta  ) \\
  &+ D_{w_1} \delta 
\end{align*}
where $D_\pm = \partial_{\psi_\pm}$ with the convention that differentials commute with the opposite set of variables, i.e. $D_\pm\phi_\pm = \phi_\pm D_\pm$ while $D_\pm\psi_\pm = -\psi_\pm D_{\pm}$. In the above formula
\[ \delta = \delta(W/Z) := \delta(w/z)\delta(\phi_+/\psi_+)\delta(\phi_-/\psi_-) \]  where  $\delta(\theta/\chi) = (\theta + \chi)$ for odd variables $\theta$ and $\chi$ and  $\delta(w/z) = \delta(w_1/z_1)\delta(w_2/z_2)$. If we do not require the right hand side to be expressed in terms of $\delta$, \eqref{eq:two_variable_bracket} is also equivalent to the commutator
\begin{equation}
  \label{eq:22_notation_D_bracket}
  [\mbb{D}(Z) ,\mbb{D}(W)] = D_{w_1}\mbb{D}(W + \psi)D_{w_2}\delta\left(\frac w z\right) -D_{w_2} \mbb{D}(W + \psi)D_{w_1}\delta\left(\frac w z\right) + D_{w_1} \delta\left( \frac W Z \right)
\end{equation}
where $W + \psi = (w_1, w_2; \phi_+ + \psi_+, \phi_- + \psi_-)$. 

\section{Loop direction and higher rank} 

\subsection{Tautological classes} 

Lehn \cite{Lehn_1999} has identified formulas for generators of the cohomology ring of $X_R^{[n]}$. 

In particular, for divisor classes we have 
\begin{equation}
  \label{eq:c1pt}
   c_1(\delta_{\pt})\cup - = -:\pmb{\alpha}^3:(z) [z^0] + (a - \frac \hbar 2 ) :\pmb{\alpha}^2:(z) [z^0] + \frac \hbar 2  \pmb{\Omega}
\end{equation}
where we have implicitly inserted the Kunneth components of the small diagonal,  $a \in H_T(\pt)$ acts trivially on the moduli space and 
\[ \pmb{\Omega} = \sum_{n > 0, i} n \alpha_{-n}(\gamma_i)\alpha_{n}(\gamma^i)\] 
is a non-local term which is not a Fourier coefficient of a vertex operator. The other divisor classes come from components of $L_0$ weighted by divisor classes on the surface; for example in the $A_{-1}$ case the other divisor takes the form 
\[ c_1(\delta_E)\cup - = -\frac 1 \hbar \sum \alpha_n(\pt) \alpha_{-n}(\pt) \] 
which is $L_0(\pt/\hbar)$. 

There are higher spin currents $\mf{L}^p(\gamma, z)$ forming a version of a $W_{1 + \infty}$ algebra acting on $V_{T^*E}$ \cite{Li_Qin_Wang_2002}. 

The commutation relations between the fields $\mc D_m(\gamma, z)$ and the higher spin currents may be calculated using the noncommutative Wick formula. 

\subsection{Higher rank} 
We mention a higher rank version of the Lie algebra construction of the paper which will  prove useful in the quantization of the solution of the classical Yang-Baxter equation satisfied by the toroidal extended affine Lie algebras. 

Consider the tensor product 

\[V_{T^*E}^{\Jac{}\otimes n} = V_{T^*E}^{\Jac{}}\otimes \cdots \otimes V_{T^*E}^{\Jac{}} .\] 
Then nullity-$(n+1)$ toroidal extended affine Lie algebra also admits vertex representations by a tensor product of affine vertex algebras, Heisenberg-Virasoro vertex algebras and sub-VOAs of hyperbolic lattice VOAs \cite{Billig_2007}. Let $\mf t^{(n+1)}(\mf{gl}_0)$ denote the nullity-$(n+1)$ toroidal extended affine Lie algebra of $\mf{gl}_0$ (with toroidal cocycle set to $\mu = 0$, as always in this paper). 

\subsubsection{Ghost representation of affine vertex algebra}

Let 
\[ b_i(z), c_i(z), ~~ i = 1, \ldots, n \]
denote generating fields of a rank $n$ $bc$-system (which plays a very different role from the higher genus rank $g$ system of Section \ref{ssec:SF}). Consider the fields 
\begin{equation}\label{eq:GL_ghost} 
  x_{ij}(z) = :c_i(z) b_j(z):
\end{equation}
A calculation using Wick's formula immediately gives 
\begin{prop}\label{prop:ghost_affine}
The assignment \eqref{eq:GL_ghost} gives a level 1 representation of $\widehat{\mf{gl}}_n$ on $V_{bc}^{\otimes n}$. 
\end{prop}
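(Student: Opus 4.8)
The plan is to establish the defining operator product expansion of the level-one $\widehat{\mf{gl}}_n$ current algebra directly, since each $x_{ij}(z)=\,:c_i(z)b_j(z):$ is a normally ordered quadratic expression in free fermionic fields whose only nonzero contractions are the rank $n$ analogues of \eqref{eq:bc_ope}, namely
\[ b_i(z)c_j(w)\sim\frac{\delta_{ij}}{z-w},\qquad c_i(z)b_j(w)\sim\frac{\delta_{ij}}{z-w}. \]
First I would expand the product $x_{ij}(z)x_{kl}(w)$ using the noncommutative Wick formula \eqref{eq:ncwick}. Only two single contractions survive---that of $b_j(z)$ with $c_k(w)$ and that of $c_i(z)$ with $b_l(w)$---together with the single double contraction combining them, while the contractions $c_i$--$c_k$ and $b_j$--$b_l$ vanish identically. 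Tracking the $(-1)^{AB}$ signs produced when the odd fields are transposed, the computation should yield
\[ x_{ij}(z)x_{kl}(w)\sim\frac{\delta_{jk}\,x_{il}(w)-\delta_{il}\,x_{kj}(w)}{z-w}+\frac{\delta_{jk}\delta_{il}}{(z-w)^2}. \]

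Next I would read off the mode commutators from this OPE via \eqref{eq:VOA_bracket}. Writing $x_{ij}(z)=\sum_{m}x_{ij}[m]z^{-m-1}$, the simple pole reproduces the loop form of the finite bracket $[E_{ij},E_{kl}]=\delta_{jk}E_{il}-\delta_{il}E_{kj}$ on $\mf{gl}_n$, while the double pole contributes the central term $m\,\delta_{m+p,0}\,\delta_{jk}\delta_{il}$ to the commutator $[x_{ij}[m],x_{kl}[p]]$. Since the trace form obeys $\langle E_{ij},E_{kl}\rangle=\tr(E_{ij}E_{kl})=\delta_{il}\delta_{jk}$, the coefficient of $(z-w)^{-2}$ is exactly this form, so the central element acts as $k=1$ and the modes $x_{ij}[m]$ satisfy the relations of $\widehat{\mf{gl}}_n$ at level one.

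The main obstacle is purely the bookkeeping of fermionic signs rather than any conceptual difficulty. Because $b_i$ and $c_j$ are odd, every contraction and every reordering in \eqref{eq:ncwick} carries a sign, and it is precisely the cancellation pattern among these signs that converts the naive Wick expansion into the antisymmetric structure constants $\delta_{jk}x_{il}-\delta_{il}x_{kj}$ and fixes the sign of the central term. I would therefore fix an ordering convention for the odd generators at the outset, verify the two single contractions and the double contraction separately against that convention, and only then assemble them; this reduces the statement to a routine if sign-sensitive Wick calculation.
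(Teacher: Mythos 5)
Your proposal is correct and follows exactly the route the paper takes: the paper's entire ``proof'' is the sentence ``A calculation using Wick's formula immediately gives'' the proposition, and your Wick computation is precisely that calculation, with the right surviving contractions, the correct OPE $x_{ij}(z)x_{kl}(w)\sim\bigl(\delta_{jk}x_{il}(w)-\delta_{il}x_{kj}(w)\bigr)(z-w)^{-1}+\delta_{jk}\delta_{il}(z-w)^{-2}$, and the correct identification of the double pole with the trace form giving level $1$ (consistent, in the $n=1$ case, with the paper's statement that the modes of $J_{bc}=\,:cb:$ generate a Heisenberg algebra with $c=1$).
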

The generating fields of $\widehat{\mf{sl}}_n \subset \widehat{\mf{gl}}_n$ are primary of conformal dimension $1$ with respect to 
\[ \omega_{bc, n}(z) = :\partial c_1(z) b_1(z) : + \cdots :\partial c_n(z) b_n(z):\] 
while the Heisenberg field 
\[ J_n(z) = :c_1(z) b_1(z) : + \cdots : c_n(z) b_m(z):\] has central charge $k_H = n$ and the twisted central charge is $k_{HV} = n/2$.

\subsubsection{Higher rank toroidal extended affine representation} 
Using the ghost representation of $\widehat{\mf{gl}}_n$ we obtain the following from Billig's construction. 
\begin{thm}\label{thm:higher_rank}The vertex algebra 
  $V_{T^*E}^{\Jac{}\otimes n} $ admits a representation of $\mf t^{(n+1)}(\mf{gl}_0)$
\end{thm}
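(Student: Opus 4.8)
The plan is to realize $V_{T^*E}^{\Jac{}\otimes n}$ as the input data for Billig's higher-nullity vertex construction \cite{Billig_2007}, in exactly the way that the $n=1$ case of the present statement recovers the module structure over $\mc T(\mf{gl}_0)$ produced from Theorem \ref{thm:billig_full} via the representation \eqref{eq:hvir_rep}. First I would use the tensor decomposition \eqref{eq:vte_jac_is_tensor} to reorganize the factors as
\[ V_{T^*E}^{\Jac{}\otimes n} \cong V^+(\RN{2}_{1,1})^{\otimes n} \otimes V_{bc}^{\otimes n}, \]
reading the $n$ hyperbolic-lattice factors as supplying $n$ commuting internal toroidal coordinates $E_1,\dots,E_n$ (playing the role of $t_1,\dots,t_n$) and the vertex-operator variable $z$ as the remaining coordinate $t_0$, so that the target of the construction is genuinely the nullity-$(n+1)$ algebra $\mf t^{(n+1)}(\mf{gl}_0)$. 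The count matches: $n$ internal directions plus one loop direction give nullity $n+1$, and for $n=1$ this reduces to the nullity-$2$ picture underlying the Proposition that $V_{T^*E}^{\Jac}$ is a module for the full toroidal algebra.

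The affine input is furnished by Proposition \ref{prop:ghost_affine}: the fields $x_{ij}(z)= :c_i(z)b_j(z):$ from \eqref{eq:GL_ghost} give a level-$1$ action of $\widehat{\mf{gl}}_n$ on $V_{bc}^{\otimes n}$. I would split $\widehat{\mf{gl}}_n = \widehat{\mf{sl}}_n \oplus \widehat{\mf{gl}}_1$, using that the $\widehat{\mf{sl}}_n$ currents are primary of conformal weight $1$ with respect to the anomalous stress tensor $\overline{\omega}_{bc,n}(z)$, while the trace current $J_n(z)$ generates the $\widehat{\mf{gl}}_1$ Heisenberg with $k_H=n$ and twisted central charge $k_{HV}=n/2$. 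This realizes the Heisenberg-Virasoro vertex algebra $V_{HVir}$ inside $V_{bc}^{\otimes n}$ through $I(z)\mapsto -J_n(z)$ and $\omega_{aux}(z)\mapsto \overline{\omega}_{bc,n}(z)$, generalizing \eqref{eq:hvir_rep}. The crucial structural point is that the $\widehat{\mf{sl}}_n$ currents do not sit in a separate tensor factor but are embedded in $V_{bc}^{\otimes n}$ as weight-$1$ primaries for $\overline{\omega}_{bc,n}$; the off-diagonal $x_{ij}$ with $i\neq j$ then provide precisely the non-abelian "rotation" fields that close the algebra of divergence-free vector fields mixing the $n$ internal directions — the new feature absent in nullity $2$, where $\widehat{\mf{sl}}_1$ is trivial and only a single Heisenberg current survives.

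With these pieces assembled I would invoke Billig's nullity-$(n+1)$ theorem: its central fields $\mrm{k}_a$ are supplied by the lattice factors exactly as in \eqref{eq:billig_K}, the currents spanning the internal $\mf{gl}_n$ come from \eqref{eq:GL_ghost}, and the derivation fields arise from the Sugawara/stress-tensor combinations as in \eqref{eq:billig_Dt}, now indexed over the $n$ directions. Closing the representation reduces, via the noncommutative Wick formula \eqref{eq:ncwick} and quasiassociativity \eqref{eq:quasiassoc}, to verifying that the resulting OPEs reproduce the structure constants of $\mf t^{(n+1)}(\mf{gl}_0)$, which, as in the nullity-$2$ case, is governed entirely by the level and central-charge data.

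The hard part will be the central-charge and level bookkeeping. One must confirm that the ghost realization delivers $\widehat{\mf{gl}}_n$ at exactly level $1$ with $k_H=n$ and $k_{HV}=n/2$, and that the stress tensor $\overline{\omega}_{bc,n}$ carries central charge $-2n$ — the anomalous $bc$-value, rather than the boson-fermion value $+n$ — so that together with the lattice factors (central charge $2n$) the total conformal structure has central charge $0$. This vanishing is the condition that allows the Virasoro-type derivation fields of $\mf t^{(n+1)}(\mf{gl}_0)$ to close without an extraneous conformal anomaly, exactly paralleling the cancellation $0=(-2)+2$ in the nullity-$2$ case. The genuine obstacle, rather than a routine OPE check, is showing that the interaction between the $n$ lattice factors and the off-diagonal $\mf{gl}_n$ currents generates \emph{all} divergence-free vector fields on the $(n+1)$-torus, so that one obtains the full algebra $\mf t^{(n+1)}(\mf{gl}_0)$ and not merely a proper subalgebra; this is precisely the content that makes \cite{Billig_2007} more than a formal iteration of the nullity-$2$ construction.
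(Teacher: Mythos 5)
Your proposal follows the paper's proof essentially verbatim: the same tensor decomposition $V_{T^*E}^{\Jac{}\otimes n} \cong V^+(\RN{2}_{1,1})^{\otimes n}\otimes V_{bc}^{\otimes n}$, the same invocation of Proposition \ref{prop:ghost_affine} for the level-$1$ ghost realization of $\widehat{\mf{gl}}_n$, the same twisted Heisenberg--Virasoro data $k_H = n$, $k_V = -2n$, $k_{HV} = n/2$ extracted from $J_n(z)$ and the $bc$-stress tensor, and the same appeal to Billig's nullity-$(n+1)$ construction \cite{Billig_2007} to close the argument. The only deviations (using $\overline{\omega}_{bc,n}$ in place of $\omega_{bc,n}$, and the extra commentary on central-charge cancellation) are cosmetic and do not change the argument.
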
 
\begin{proof}
We have a tensor decomposition 
\[ V_{T^*E}^{\Jac{} \otimes n} = V^+(\RN{2}_{1,1})^{\otimes n} \otimes V_{bc}^{\otimes n}.\] By Proposition \ref{prop:ghost_affine} we get a representation $V_1(\mf{sl}_n) \to V_{bc}^{\otimes n}$ and a twisted Heisenberg-Virasoro algebra from fields $\omega_{bc,n}(z)$ and $J_n(z)$ with $k_H = n, k_V = -2n,  k_{VH} = n/2$. This gives a representation of the toroidal extended affine Lie algebra by \cite{Billig_2007}. 
\end{proof} 
\begin{rmk}
By considering normal ordered products of free fermionic fields and vertex operators $Y(e^{a_1 E_1 + \cdots + a_n E_n}, z)$ we obtain a Lie superalgebra extending $t^{(n+1)}(\mf{gl}_0)$ as in the nullity-2 case. 
\end{rmk}





\printbibliography
\end{document}